
\documentclass[11pt]{amsart}
\usepackage{amsmath, amsthm, amssymb}
\usepackage{graphicx}
\usepackage[usenames]{color}
\usepackage{srcltx} 
\usepackage{verbatim}
\usepackage{bbm}

\newtheorem{thm}{Theorem}[section]
\newcommand{\bt}{\begin{thm}}
\newcommand{\et}{\end{thm}}

\newtheorem{ex}[thm]{Example}

\newtheorem{cor}[thm]{Corollary}   
\newcommand{\bc}{\begin{cor}}
\newcommand{\ec}{\end{cor}}

\newtheorem{lem}[thm]{Lemma}   
\newcommand{\bl}{\begin{lem}}
\newcommand{\el}{\end{lem}}

\newtheorem{prop}[thm]{Proposition}
\newcommand{\bp}{\begin{prop}}
\newcommand{\ep}{\end{prop}}

\newtheorem{defn}[thm]{Definition}
\newcommand{\bd}{\begin{defn}}    
\newcommand{\ed}{\end{defn}}

\newtheorem{rmrk}[thm]{Remark}   

\newcommand{\br}{\begin{rmrk}}
\newcommand{\er}{\end{rmrk}}

\newcommand{\GHto}{\stackrel { \textrm{GH}}{\longrightarrow} }
\newcommand{\Fto}{\stackrel {\mathcal{F}}{\longrightarrow} }
\newcommand{\VFto}{\stackrel {\mathcal{VF}}{\longrightarrow} }
\newcommand{\VADBto}{\stackrel {VADB}{\longrightarrow} }

\newcommand{\be}{\begin{equation}}

\newcommand{\ee}{\end{equation}}

\newcommand{\C}{\mathbb{C}}

\newcommand{\R}{\mathbb{R}}

\newcommand{\vare}{\varepsilon}

\newcommand{\diam}{\operatorname{Diam}}

\newcommand{\set}{\rm{set}}

\newcommand{\Scal}{{\rm Scal}} 
\newcommand{\disjointunion}{\sqcup}

\newcommand{\mass}{{\mathbf M}}



\newcommand{\area}{\operatorname{Area}}
\newcommand{\vol}{\operatorname{Vol}}

\newcommand{\Sp}{\mathbb{S}}      


\begin{document}

\title[Volume Above Distance Below]{Volume Above Distance Below}

\author{Brian Allen}
\address{Lehman College}
\email{brianallenmath@gmail.com}

\author{Raquel Perales}
\address{CONACYT Researcher for Mexico-Universidad Nacional Autónoma de México}
\email{raquel.peralesaguilar@gmail.com}

\author{Christina Sormani}
\thanks{C. Sormani was partially supported by NSF DMS 1006059.  Some ideas towards this work arose at the IAS Emerging Topics on Scalar Curvature and Convergence that C. Sormani co-organized with M. Gromov in 2018.}
\address{CUNY Graduate Center and Lehman College}
\email{sormanic@gmail.com}





\begin{abstract}
Given a pair of metric tensors $g_1 \ge g_0$ on a Riemannian manifold, $M$, it is well known
that $\vol_1(M) \ge \vol_0(M)$.  Furthermore one has rigidity: the volumes are equal if and only 
if the metric tensors are the same $g_1=g_0$.  Here we prove that
if $g_j \ge g_0$  and $\vol_j(M)\to \vol_0(M)$ then $(M,g_j)$ converge to $(M,g_0)$ in the
volume preserving intrinsic flat sense.  Well known examples demonstrate that one need not
obtain smooth, $C^0$, Lipschitz, or even Gromov-Hausdorff convergence in this setting.  Our theorem
may also be applied as a tool towards proving other open conjectures concerning the geometric stability 
of a variety of rigidity theorems in Riemannian geometry.  
To complete our proof, we provide a novel way of estimating the
intrinsic flat distance between Riemannian manifolds which is interesting in its own right.
\end{abstract}

\maketitle
\vspace{-.1in}
\section{Introduction}\label{sect:intro}


Over the past few decades a number of geometric stability theorems have been proven where one assumes a lower bound on Ricci curvature and proves the Riemannian manifolds are close in the Gromov-Hausdorff sense.  However, without a lower bound on Ricci curvature, there are usually counter examples showing that Gromov-Hausdorff stability is too strong a notion.   Ilmanen's Example depicted in Figure~\ref{fig-Ilmanen} is a sequence of spheres $({\mathbb{S}}^m,g_j)$ with Riemannian metric tensors, $g_j\ge g_0$, that have positive scalar curvature, $\diam(M,g_j) \le D_0$, where $\vol_j({\mathbb{S}}^m) \to \vol_0({\mathbb{S}}^m)$ but the sequence has no converging subsequence.   See 
\cite{Sormani-scalar} for a survey of open stability conjectures and similar counter examples to stability in the Gromov-Hausdorff sense.  

\begin{figure}[h] 
   \center{\includegraphics[width=.6\textwidth]{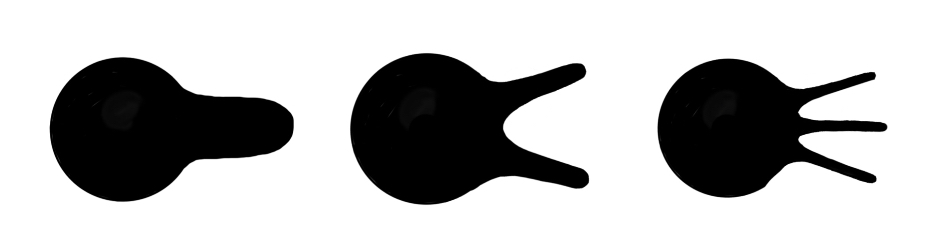}}
\caption{A sequence of spheres $({\mathbb{S}}^m,g_j)$ with $g_j\ge g_0$ and $\vol_j({\mathbb{S}}^m) \to \vol_0({\mathbb{S}}^m)$ that have no Gromov-Hausdorff limit.}
   \label{fig-Ilmanen}
\end{figure}

Gromov has suggested in \cite{Gromov-Dirac} that intrinsic flat convergence might be the right notion of convergence to study for sequences of manifolds with lower bounds on scalar curvature.   Intrinsic flat convergence was first defined by the third author with Wenger in \cite{SW-JDG} building upon the work of Ambrosio-Kirchheim \cite{AK}.  A sequence of oriented manifolds $M_j$ converges in the intrinsic flat sense to $M_0$, $M_j \Fto M_0$ iff they can be embedded by distance preserving maps $\phi_j: M_j \to Z$ into a common complete metric space $Z$ so that the submanifolds $\varphi_j(M_j)$  converge in the flat or weak sense as currents in $Z$ \cite{SW-JDG}.  See Section~\ref{sect:background} for the precise definition.   The sequence is said to converge in the volume preserving intrinsic flat sense $M_j\VFto M_0$ if and only if
\be
M_j \Fto M_0 \textrm{ and } \vol_j(M_j) \to \vol_0(M_0).
\ee  
The third author, Portegies, Lee, and Jauregui have proven many consequences of intrinsic flat convergence in
\cite{Sormani-ArzAsc} \cite{Portegies-Sormani1} \cite{Jauregui-Lee-SWIF}.  In particular balls and spheres within the manifolds also converge in the volume preserving intrinsic flat sense and their filling volumes converge.  In addition Portegies has shown volume preserving intrinsic flat convergence implies measure convergence and that the Laplace spectra of the manifolds semiconverge \cite{Portegies-F-evalue}.

In this paper we prove a new theorem estimating the intrinsic flat distance between two manifolds.    The proof involves a new construction of a common metric space, $Z$, into which we embed the Riemannian manifolds $M_j$ and $M_0$ assuming their distance functions satisfy, $d_j\ge d_0$, and are almost equal on a set of large measure.  We then apply this new estimate to prove the following stability theorem:

\begin{thm}\label{vol-thm-better} 
Suppose we have a fixed  compact oriented Riemannian manifold, $M_0=(M^m,g_0)$,
without boundary and a sequence of  distance non-increasing $C^1$ diffeomorphisms
\be\label{eq-thmbetterF}
F_j: M_j=(M,g_j) \to M_0=(M,g_0)
\ee
i.e.
\be\label{eq-thmbetterd}
d_j(p,q) \ge d_0(F_j(p), F_j(q)) \qquad \forall p,q\in M_0 
\ee
and a uniform upper bound on diameter
\be
\diam_j(M_j) \le D_0
\ee
and volume convergence
\be
\vol_j(M_j) \to \vol_0(M_0)
\ee
then $M_j$ converge to $M_0$ in the volume preserving intrinsic flat sense:
\be
M_j \VFto M_0.
\ee
\end{thm}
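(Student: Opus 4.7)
The plan is to turn the distance non-increasing hypothesis into a pointwise metric comparison on a set of large measure, then to build a common metric space $Z_j$ into which both $(M,g_j)$ and $(M,g_0)$ embed by distance preserving maps whose images are close on that set, and finally to exhibit an integral current filling whose mass bounds the intrinsic flat distance.

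\textbf{Step 1: Jacobian analysis and the good set.} Since each $F_j : M_j \to M_0$ is a distance non-increasing $C^1$ diffeomorphism, its differential satisfies $F_j^* g_0 \le g_j$ as quadratic forms pointwise, and the Jacobian $\jac(F_j) := \sqrt{\det(g_j^{-1} F_j^* g_0)}$ lies in $(0,1]$. The area formula gives $\int_{M_j} \jac(F_j)\, dV_{g_j} = \vol_0(M_0)$, so the volume hypothesis rewrites as
\[
\int_{M_j} \bigl(1 - \jac(F_j)\bigr)\, dV_{g_j} = \vol_j(M_j) - \vol_0(M_0) \longrightarrow 0.
\]
Choose $\epsilon_j \to 0$ slowly and set $W_j := \{ p \in M : \jac(F_j)(p) \ge 1 - \epsilon_j \}$; Markov's inequality gives $\vol_j(M_j \setminus W_j) \to 0$. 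On $W_j$ the eigenvalues $\lambda_i(p)$ of $F_j^* g_0$ relative to $g_j$ lie in $(0,1]$ with product $\ge (1 - \epsilon_j)^2$, forcing each $\lambda_i \ge 1 - 2\epsilon_j$, so that
\[
(1 - 2\epsilon_j)\, g_j \;\le\; F_j^* g_0 \;\le\; g_j \quad \text{on } W_j,
\]
and $F_j$ is infinitesimally a near-isometry on the good set.

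\textbf{Step 2: Building $Z_j$ (the main obstacle).} Take $Z_j$ to be the length space obtained by disjointly gluing $M_j$ and $M_0$ via segments: for each $p \in M$ insert an interval of length $h_j(p)$ joining $p \in M_j$ to $F_j(p) \in M_0$, and form the induced length metric. The height $h_j$ must be \emph{small} (a vanishing constant $h_j \to 0$) on $W_j$ and \emph{large} (of order the uniform diameter $D_0$) on $M \setminus W_j$. The dichotomy is forced by the following tension: on $W_j$ the two distance functions are nearly comparable by Step 1, but on the bad set $F_j$ can compress arbitrarily long $d_j$-geodesics into short $d_0$-distances, so tall segments there are needed to prevent the bad region from furnishing shortcut routes $p \leadsto F_j(p) \leadsto F_j(p') \leadsto p'$ through $M_0$ that would spoil distance preservation of $\varphi_j : M_j \hookrightarrow Z_j$. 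Verifying that a carefully calibrated $h_j$ really makes both $\varphi_j$ and $\varphi_0 : M_0 \hookrightarrow Z_j$ distance preserving is the technical heart of the argument and is exactly the ``novel way of estimating the intrinsic flat distance between Riemannian manifolds'' promised in the abstract. Calibrating $h_j$ -- large enough over the bad set to block shortcuts, yet small enough on average that the filling in Step 3 still has vanishing mass -- is where all of the hypotheses (the diameter bound $D_0$, the volume decay of the bad set, and the pointwise comparison on $W_j$) must be combined.

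\textbf{Step 3: Filling and conclusion.} The family of joining segments parametrizes a Lipschitz map $\Phi : M \times [0,1] \to Z_j$, and its pushforward current $B_j := \Phi_{\#}[M \times [0,1]]$ is integral with $\partial B_j = \varphi_{0*}[M_0] - \varphi_{j*}[M_j]$. Hence
\[
d_{\mathcal F}(M_j, M_0) \;\le\; \mass(B_j) \;\lesssim\; h_j \cdot \vol_j(M_j) \;+\; D_0 \cdot \vol_j(M_j \setminus W_j),
\]
and choosing $h_j \to 0$ slowly enough relative to $\vol_j(M_j \setminus W_j)$ sends the right-hand side to $0$. Together with the given volume convergence, this is precisely $M_j \VFto M_0$.
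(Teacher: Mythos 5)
Your proposal runs aground at the transition from Step 1 to Step 2, where you assert that ``on $W_j$ the two distance functions are nearly comparable by Step 1.'' This does not follow. The Jacobian estimate does give a pointwise bound $(1-2\epsilon_j)\,g_j \le F_j^*g_0 \le g_j$ on $W_j$, but pointwise comparability of the metric tensors on a subset of large measure does \emph{not} yield comparability of the induced distance functions between points of that subset: a $g_j$-curve realizing (or nearly realizing) $d_j(p,q)$ for $p,q\in W_j$ is free to leave $W_j$, and on $M_j\setminus W_j$ you have no control on $g_j$ beyond $g_j \ge F_j^*g_0$. The hypotheses permit $g_j$ to be enormous on the bad set, so an a priori estimate of the form $d_j(p,q) \le d_0(F_j(p),F_j(q)) + o(1)$ for $p,q\in W_j$ simply is not available from Markov and eigenvalue control. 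The paper explicitly warns against precisely this shortcut in its rigidity discussion (Section 2.2): ``one cannot conclude $\sqrt{\det_{g_0}(g_j)} \to 1$. In fact we will see this is not well controlled at all.'' The whole difficulty of the theorem lives in this step.

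What the paper does instead is more elaborate and is the real content. It first invokes Theorem 4.4 of Allen--Sormani (reproduced as Theorem~\ref{PointwiseConvergenceAE}), which uses a foliated-tube argument around $g_0$-geodesics together with the normal exponential map and dominated convergence to upgrade $g_j\ge g_0$ plus $\vol_j\to\vol_0$ into \emph{pointwise a.e.\ convergence of the distance functions} $d_j\to d_0$ on $M\times M$ along a subsequence. Egoroff then gives uniform convergence on a subset $S_\vare\subset M\times M$ of nearly full measure, and the good set in $M$ is defined in terms of distances rather than Jacobians: $W_{\kappa\vare}=\{p:\vol_0(S_{p,\vare})>(1-\kappa\vare)\vol_0(M)\}$, with $S_{p,\vare}$ the $p$-slice of $S_\vare$. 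The uniform bound $|d_j(p_1,p_2)-d_0(p_1,p_2)|<2\lambda'+2\delta_{\vare,j}$ on $W_{\kappa\vare}\times W_{\kappa\vare}$ is then proved by a triangle-inequality detour through a point $q\in S_{p_1,\vare}\cap S_{p_2,\vare}$ near the $d_0$-midpoint, whose existence uses Bishop--Gromov volume comparison (Lemma~\ref{ball-in-0}). None of this is a refinement of your Step 1; it is a different mechanism, and it is indispensable.

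On Steps 2 and 3: your variable-height gluing is a reasonable heuristic, but the paper's actual construction (Lemma~\ref{cnstr0-Z}) is different and cleaner. It uses a \emph{constant} height $h_j\ge\sqrt{2\delta_jD+\delta_j^2}$, glues $M_0$ to the bottom of the full cylinder $M\times[0,h_j]$, and glues $M_j$ to the cylinder only over $\overline{W}_j\times\{h_j\}$, leaving the bad part of $M_j$ dangling; the filling is $B=[[M_j\times[0,h_j]]]$ and the excess current $A$ is supported on $M_j\setminus W_j$ and its copy at height $h_j$, giving $d_{\mathcal F}(M_j,M_0)\le 2\vol_j(M_j\setminus W_j)+h_jV$. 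You explicitly defer the distance-preservation verification as ``the technical heart of the argument,'' but that verification (Lemma~\ref{cnstr-Z}) is itself nontrivial and depends on the uniform distance bound $d_j\le d_0\circ(F_j\times F_j)+2\delta_j$ on $W_j$ --- exactly the estimate your Step 1 does not supply. So as written the proposal identifies the shape of the argument but is missing the key analytic input (tube argument plus Egoroff) and leaves the central embedding lemma unproved.
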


Note that this theorem can be seen as a stability result, since it is known that  $g_1\ge g_0 \implies \vol_1\ge \vol_0$ and if the volumes are equal, $\vol_1=\vol_2$, then $g_1=g_2$.  
Our results have applications to important stability theorems as well.  One of the most famous rigidity theorems involving scalar curvature is the Scalar Torus Rigidity Theorem of Schoen-Yau and Gromov-Lawson, which states that if a manifold is homeomorphic to a torus and has $\Scal\ge 0$ then it is isometric to a flat torus
\cite{Schoen-Yau-min-surf}\cite{Gromov-Lawson-torus}.  Gromov has conjectured that this theorem is stable with respect to intrinsic flat convergence \cite{Gromov-Dirac} (cf.~\cite{Sormani-scalar}).  The first author has proven this stability in the warped product setting in joint work with Hernandez-Vazquez, Parise, Payne, and Wang \cite{AHMPPW1}.  The second author has proven this stability in the graph setting in joint work with Cabrera Pacheco
and Ketterer \cite{CPKP19}.  In both these settings the distances are bounded from below and the volumes from above, and thus one may apply Theorem~\ref{vol-thm} as an endplay for their proofs.  The first author has recently applied this paper to prove the stability in the conformal setting \cite{Allen-Conformal-Torus}.

Another important rigidity theorems involving scalar curvature is the Schoen-Yau Positive Mass Theorem \cite{Schoen-Yau-positive-mass}.  This theorem is also conjectured to be stable with respect to intrinsic flat convergence (cf.~\cite{Sormani-scalar}).   As this theorem involves noncompact manifolds, one proves stability by proving intrinsic flat convergence of balls within these spaces.  The first and second author have recently applied the theorems and techniques in this paper to manifolds with boundary  and applied their results to prove the almost rigidity of the positive mass theorem in the graph setting without black holes \cite{Allen-Perales-1}.   In joint work with Huang and Lee, the second author has applied this work to prove it in the graph setting with black holes as well \cite{Huang-Lee-Perales} providing a completely new proof of the results claimed in earlier work of Huang, Lee, and the third author \cite{HLS}.   The second author has also applied these theorems in an upcoming paper to prove the stability of the hyperbolic positive mass theorem in the graph setting in joint work with Cabrera Pacheco \cite{CPP21}.

It is important to note that the hypotheses \eqref{eq-thmbetterF} and \eqref{eq-thmbetterd} of Theorem~\ref{vol-thm-better} are equivalent to assuming
$g_j\ge g_0$ on a fixed manifold (see Theorem~\ref{vol-thm} within). 
It is also important to note that Theorem~\ref{vol-thm-better}  (equivalently Theorem~\ref{vol-thm}) only applies for distances bounded below and volumes bounded above and not visa versa. In Example~\ref{Cinched-Sphere} we see that with $g_j\le g_0$ and $\vol_j \to \vol_0$ the
$M_j$ can fail to converge to $M_0$.   This surprising example of conformal metrics on a sphere first appeared in work of the first and third authors \cite{Allen-Sormani-2} and a similar example with warped product metrics appeared in an earlier paper of theirs \cite{Allen-Sormani}.   These examples
converge to a cinched sphere, a cinched cylinder, or a cinched torus.   In Example~\ref{to-Finsler} we see warped product metrics $g_j$ on a torus 
${\mathbb{T}}^2$ 
such that $g_j \le g_0$ and $\vol_j\to \vol_0$ and yet the
Gromov-Hausdorff and intrinsic flat limit of $({\mathbb{T}}^2, g_j)$ is a Finsler manifold with a symmetric norm that is not an inner product \cite{Allen-Sormani}.  We review these examples in Section~\ref{sect:background}.  Any weaker geometric notion of convergence must also have the same limit, so one can never prove stability for distances above and volumes below.

Some might say that the hypothesis requiring pointwise control on the metric tensors from below
is too strong a hypotheses to be useful in more general settings.  In Corollary~\ref{cor-vol-thm} we see that we only need $C^0$ convergence of the metric tensors from below instead of $g_j\ge g_0$.    In Corollary~\ref{cor-Lp-thm}  we see that $L^p$ convergence with $p\ge m$ can replace the volume convergence.  In Remark~\ref{rmrk-diffeo}
we point out that one really only needs a sequence of diffeomorphic Riemannian manifolds for which one can find a sequence of diffeomorphisms for which the pull backs of the metric tensors satisfy the hypotheses of our theorem or corollary to obtain the conclusion since intrinsic flat convergence is invariant under isometry. 

The main results of this paper suggest a new type of convergence which one could study which we now define.

\begin{defn}
We say that a sequence of Riemannian manifolds $M_j$ without boundary has volume above distance below (VADB) convergence to a Riemannian manifold $M_{\infty}$, $M_j \VADBto M_{\infty}$, if 
\begin{align}
\vol(M_j) &\rightarrow \vol(M_{\infty}), \\\diam(M_j) &\le D_0,
\end{align}
 and there exist $C^1$ diffeomorphisma $\Psi_j: M_{\infty} \rightarrow M_j$ with \begin{align}
 d_{g_j}(\Psi_j(p),\Psi_j(q))\ge d_{g_{\infty}}(p,q)\quad \forall (p,q) \in M_{\infty}\times M_{\infty}.
 \end{align}
\end{defn}

With this new definition our main theorem can be stated $M_j \VADBto M_{\infty}$ implies $M_j \VFto M_{\infty}$. We note that $M_j \VADBto M_{\infty}$ when $M_j=(M,g_j)$, $M_{\infty}=(M,g_{\infty})$, $\vol(M_j) \rightarrow \vol(M_{\infty})$, $\diam(M_j)\le D_0$ and $g_j \ge g_{\infty}$. We can also rescale $M_j$ to study manifolds so that $g_j \ge (1 - \frac{1}{j}) g_{\infty}$. In \cite{ScalarSurvey-Sormani} the third named author proposes a few natural conjectures involving this new notion of convergence.

In Section~\ref{sect:background}, we briefly provide sufficient background on integral current spaces and the intrinsic flat distance so as to make this paper understandable to those who are new to this notion.  We refer the reader also to \cite{Sormani-scalar} for a longer review.   We also review key examples of the first and third authors which are relevant to this paper as well as their earlier versions of Theorem~\ref{vol-thm} which imply Gromov-Hausdorff as well as intrinsic flat convergence under significantly stronger hypotheses.   

In Section~\ref{sect:NewSWIFEstimate}, we prove Theorem~\ref{est-SWIF} which
provides the new method of estimating the intrinsic flat distance between two Riemannian manifolds. The proof involves a new construction of a common metric space, $Z$, into which we embed the Riemannian manifolds $M_j$ and $M_0$ such that $d_j\ge d_0$ and $d_j$ is close to $d_0$ on a good set of almost full measure.  

In Section~\ref{sect:GoodSet}, we show how to construct a good set with almost full measure where we can guarantee control on the distance function on $M_j$. A key insight is to use Egoroff's Theorem in order to go from pointwise convergence of distance almost everywhere to uniform convergence on a subset of $M\times M$ of almost full measure. The bulk of the section is then devoted to describing a good subset of $M$ of almost full measure which satisfies the necessary hypotheses of Section \ref{sect:NewSWIFEstimate} in order to estimate the Intrinsic Flat distance.

In Section~\ref{sect:ProofMainThm}, we put all of these results together in order to prove Theorem \ref{vol-thm}.  We also state and prove Corollary \ref{cor-vol-thm}.   The paper closes with a section of open problems.

We would like to thank Misha Gromov for his interest in intrinsic flat convergence and all the attendees of the IAS Emerging Topics on Scalar Curvature and Convergence.   We would particularly like to thank Ian Adelstein, Lucas Ambrozio, Armando Cabrera Pacheco, Alessandro Carlotto, Michael Eichmair, Lan-Hsuan Huang, Jeff Jauregui, Demetre Kazaras, Christian Ketterer, Sajjad Lakzian, Dan Lee, Chao Li, Yevgeny Liokumovich, Siyuan Lu, Fernando Coda Marques, Elena Maeder-Baumdicker, Andrea Malchiodi, Yashar Memarian, Pengzi Miao, Frank Morgan, Alexander Nabutovsky, Andre Neves, Alec Payne, Jacobus Portegies, Regina Rotman, Richard Schoen, Craig Sutton, Shengwen Wang, Guofang Wei, Franco Vargas Pallete, Robert Young, Ruobing Zhang, and Xin Zhou for intriguing discussions with us related to intrinsic flat convergence and stability at this event and at other workshops at Yale and NYU.    We would also like to thank the many participants in the 2020 Virtual Workshop on Ricci and Scalar Curvature in honor of Gromov for their many thoughts and suggestions of further applications of this work.


\section{Background}\label{sect:background}
The main theorem in this paper is a stability or almost rigidity theorem.  In this section we first
write a restatement of the main theorem with different equivalent hypotheses and prove the equivalence
using basic Riemannian geometry.  We then review the corresponding rigidity theorem and provide a new proof of that
theorem which gives in some sense an outline of our proof of the rigidity theorem.  Next we review the key
aspects of intrinsic flat convergence and work of Ambrosio-Kirchheim
needed to understand the statement of our main theorem and its proof.
We then review an older theorem of Huang-Lee and the third author which is used to prove stronger convergence
and apply this older theorem to present the examples mentioned in the introduction.  The final subsection reviews a key theorem by the first and third authors which will be applied to prove the main theorem.   

\subsection{Restating the Main Theorem}

Before we begin we would like to clarify that our main theorem is equivalent to the following theorem:

\begin{thm}\label{vol-thm} 
Suppose we have a fixed  compact oriented Riemannian manifold, $M_0=(M^m,g_0)$,
without boundary and
a sequence of metric tensors $g_j$ on $M$ defining $M_j=(M, g_j)$ with
\be \label{g_j-below-vol-thm}
g_0(v,v) \le g_j(v,v) \qquad \forall v\in TM
\ee 
and a uniform upper bound on diameter
\be
\diam_j(M_j) \le D_0
\ee
and volume convergence
\be
\vol_j(M_j) \to \vol_0(M_0)
\ee
then $M_j$ converge to $M_0$ in the volume preserving intrinsic flat sense:
\be
M_j \VFto M_0.
\ee
\end{thm}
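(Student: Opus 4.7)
The strategy is to reduce to the new intrinsic flat distance estimate (Theorem \ref{est-SWIF}) by constructing a good set on which $d_j$ and $d_0$ are uniformly close. The pointwise hypothesis $g_j \geq g_0$ immediately yields $d_j \geq d_0$ everywhere, so the real task is to bound $d_j$ from above by $d_0 + o(1)$ on a set of pairs of almost full measure.

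First I would extract quantitative pointwise information from the volume assumption. Since $g_j \geq g_0$ as bilinear forms, the density $\rho_j = d\vol_j/d\vol_0$ satisfies $\rho_j \geq 1$ pointwise, while $\int_M \rho_j\, d\vol_0 = \vol_j(M) \to \vol_0(M)$. Hence $\rho_j - 1 \to 0$ in $L^1(M, \vol_0)$, and along a subsequence $\vol_0$-almost everywhere. A symmetric-matrix argument (the eigenvalues of $g_0^{-1} g_j$ at a point are all at least $1$ with product tending to $1$) then forces $g_j(p) \to g_0(p)$ as tensors for $\vol_0$-almost every $p \in M$.

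Next I would promote this to a bound $d_j(p,q) \leq d_0(p,q) + o(1)$ on a subset of almost full $(\vol_0 \otimes \vol_0)$-measure in $M \times M$. The key idea, of Fubini type, is that for most pairs $(p,q)$, the $g_0$-minimizing geodesic $\gamma_{pq}$ avoids the small bad region where $g_j$ is not close to $g_0$, whence $d_j(p,q) \leq \mathrm{length}_{g_j}(\gamma_{pq}) \leq (1 + \delta_j)\, d_0(p,q)$ for some $\delta_j \to 0$. Using Egoroff's theorem on $M \times M$, this pointwise almost-everywhere estimate is upgraded to uniform closeness of $d_j$ and $d_0$ on a good set $W_j$ of almost full measure, which supplies the input for Theorem \ref{est-SWIF}. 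That theorem constructs a common metric space $Z$ into which $M_j$ and $M_0$ embed distance-preservingly, and produces an explicit bound on the intrinsic flat distance in terms of the $d_j$-$d_0$ discrepancy on $W_j$ and the $\vol_0$-measure of its complement, controlled via the diameter bound $D_0$. Letting $W_j$ exhaust $M$ yields intrinsic flat convergence, and the hypothesized volume convergence upgrades this to $M_j \VFto M_0$; a standard subsequence argument passes from the subsequence back to the full sequence.

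The main obstacle I expect is the Fubini-type step linking pointwise convergence of the metric tensors $g_j$ to pointwise convergence of the distances $d_j$. Since distance is a global quantity defined through paths, a $g_0$-geodesic joining two otherwise well-behaved points can still traverse the small region where $g_j$ is arbitrarily far from $g_0$, inflating $d_j(p,q)$ uncontrollably; ruling this out for a set of pairs of almost full product measure requires a careful integral-geometric estimate and is precisely the content of the good-set construction in Section \ref{sect:GoodSet}.
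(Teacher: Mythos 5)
Your high-level skeleton matches the paper's: establish pointwise a.e.\ convergence $d_j \to d_0$ on $M\times M$, upgrade via Egoroff and a good-set construction, and feed the result into the new intrinsic flat estimate (Theorem~\ref{est-SWIF}); finish with a subsequence argument. The difficulty is in your route to pointwise a.e.\ convergence of distances. Your proposed Fubini step --- that ``for most pairs $(p,q)$, the $g_0$-minimizing geodesic $\gamma_{pq}$ \emph{avoids} the small bad region where $g_j$ is not close to $g_0$'' --- is not true, even qualitatively. A set $B_j$ of arbitrarily small $\vol_0$-measure can be a thin tubular neighborhood of a separating hypersurface $\Sigma\subset M$; every $g_0$-geodesic joining points on opposite sides of $\Sigma$ must then cross $B_j$, so the set of pairs whose geodesic hits $B_j$ has $\vol_0\times\vol_0$-measure bounded away from zero no matter how small $\vol_0(B_j)$ is. Since $g_j$ is unbounded on $B_j$, even an arbitrarily short arc in $B_j$ can inflate $L_{g_j}(\gamma_{pq})$ uncontrollably, which is exactly what your own closing paragraph flags. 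So an avoidance argument cannot establish the claimed inequality $d_j(p,q)\le(1+\delta_j)d_0(p,q)$ for a.e.\ $(p,q)$.

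The correct mechanism, which is the content of Theorem~\ref{PointwiseConvergenceAE} (reviewed in Section~\ref{subsect:PreviousWorkRelating}, proved in \cite{Allen-Sormani-2}, not in Section~\ref{sect:GoodSet} as you state), does not try to make geodesics avoid the bad set. Instead it controls the \emph{integral} of the defect along geodesics by exploiting the algebraic inequality $\lambda_m \le \lambda_1\cdots\lambda_m = \sqrt{\det_{g_0}(g_j)}$ valid when all eigenvalues are $\ge1$, so that $L_{g_j}(\gamma)-L_{g_0}(\gamma) \le \int_\gamma(\lambda_m-1)\,dt_{g_0} \le \int_\gamma(\sqrt{\det_{g_0}(g_j)}-1)\,dt_{g_0}$, and then integrates this over carefully chosen tubes of $g_0$-geodesics (avoiding focal points so the Jacobian $|d\exp^\perp|$ is bounded below) to tie it to the $L^1$-smallness of $\sqrt{\det_{g_0}(g_j)}-1$, which is your $\rho_j-1\to 0$ in $L^1$. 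Your first two observations ($\rho_j\ge1$, $\rho_j\to1$ in $L^1$ and a.e., eigenvalues $\to1$) are correct and are indeed where the paper starts, but the step to pointwise a.e.\ convergence of distances has to go through this tube-integration (or an equivalent Santal\'o-type estimate on $\int_\gamma(\rho_j-1)$), not through avoidance. The misattribution matters: Section~\ref{sect:GoodSet} takes pointwise a.e.\ convergence of $d_j$ as a \emph{hypothesis} (see \eqref{ptwise-to-bulk-1}) and does only Egoroff plus Fubini slicing to build $W_{\kappa\vare}\subset M$; it contains no integral-geometric argument about geodesics.
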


The equivalence can be seen by pushing forward the metric $g_j$ to $M_0$ using the  map $F_j: M_j \to M_0$
and applying the two lemmas below. 

\begin{lem}\label{DistToMetric}
Let $M_1=(M,g_1)$ and $M_0=(M,g_0)$ be Riemannian manifolds and  $F: M_1 \rightarrow M_0$ be a $C^1$ diffeomorphism then 
\begin{equation}
g_0 (dF(v), dF(v))  \leq g_1 (v,v) \qquad \forall v\in TM_1.
\end{equation}
iff
\be
d_0(F(p),F(q))\le d_1(p,q)\qquad \forall p,q\in M_1
\ee
\end{lem}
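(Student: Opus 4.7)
I would prove the two implications separately using the Riemannian length functional and the fact that for any Riemannian manifold $(M,g)$ with induced distance $d_g$, one has both $d_g(p,q) = \inf_\gamma L_g(\gamma)$ (infimum over piecewise $C^1$ curves from $p$ to $q$) and the infinitesimal identity
$$\lim_{t \to 0}\frac{d_g(\alpha(t),\alpha(0))}{|t|} = \sqrt{g(\alpha'(0),\alpha'(0))}$$
for any $C^1$ curve $\alpha$ through $\alpha(0)$. Both facts are classical and the second follows immediately from a Taylor expansion of $\alpha$ in normal coordinates centered at $\alpha(0)$.

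For the forward implication ($\Rightarrow$), I would take any piecewise $C^1$ curve $\gamma:[0,1]\to M_1$ from $p$ to $q$ and note that the pointwise hypothesis on metric tensors, applied to the vector $\gamma'(t)\in T_{\gamma(t)}M_1$, gives after taking square roots and integrating
$$L_0(F\circ\gamma) = \int_0^1 \sqrt{g_0(dF(\gamma'),dF(\gamma'))}\,dt \;\le\; \int_0^1 \sqrt{g_1(\gamma',\gamma')}\,dt = L_1(\gamma).$$
Since $F$ is a diffeomorphism, the assignment $\gamma\mapsto F\circ\gamma$ bijects piecewise $C^1$ curves from $p$ to $q$ in $M_1$ with those from $F(p)$ to $F(q)$ in $M_0$, so taking the infimum of both sides yields $d_0(F(p),F(q)) \le d_1(p,q)$.

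For the reverse implication ($\Leftarrow$), I would fix $p\in M_1$ and $v\in T_pM_1$, pick any $C^1$ curve $\gamma:(-\varepsilon,\varepsilon)\to M_1$ with $\gamma(0)=p$ and $\gamma'(0)=v$, and use the distance hypothesis along this curve:
$$\frac{d_0(F(\gamma(t)),F(p))}{|t|} \;\le\; \frac{d_1(\gamma(t),p)}{|t|},\qquad t\ne 0.$$
Applying the infinitesimal identity to $\gamma$ in $(M,g_1)$ and to $F\circ\gamma$ in $(M,g_0)$, whose velocity at $0$ is $dF(v)$, the limits of the two sides as $t\to 0$ are $\sqrt{g_0(dF(v),dF(v))}$ and $\sqrt{g_1(v,v)}$ respectively, and squaring gives the desired tensor inequality.

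The argument is entirely routine; I do not expect a substantive obstacle. The only step worth flagging is the infinitesimal identity used in the reverse direction, but this is standard and follows at once from the Gauss lemma or a normal-coordinates Taylor expansion, so there is nothing to grind through.
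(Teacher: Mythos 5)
Your argument is correct and follows essentially the same route as the paper: the forward direction integrates the pointwise tensor inequality along curves and takes the infimum over paths, and the reverse direction recovers the tensor from the first-order asymptotics of the distance along a curve with prescribed initial velocity. The paper writes the infinitesimal identity in its squared form $g(v,v)=\lim_{t\to 0} d_g(C(t),C(0))^2/t^2$ rather than the unsquared form you state, but that is a cosmetic difference.
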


\begin{proof}
First recall by the definition of the Riemannian distance
\be
d_g(p,q) = \inf \{ L_{g}(C): \, C(0)=p,\, C(1)=q\} \textrm{ where } L_g(C)=\int_0^1 g(C',C')^{1/2} \, dt.
\ee
Thus it is easy to see that
\be
d_0(F(p),F(q))\le L_{g_0}(F\circ C) \le L_{g_1}(C)
\ee
and taking the infimum we have $d_0(F(p),F(q))\le d_1(p,q)$.

On the other hand, if we let $C:(-1,1)\to M_1$ be any smooth curve
such that $C(0)=p$ and $C'(0)=v$.  Then we can calculate,
\begin{align}
g_1 (v,v)   &= \lim_{t \to 0}\frac{d_1(C(t), p)^2}{t^2}
\\& \ge \lim_{t \to 0}\frac{d_0(F(C(t)), F(p))^2}{t^2}\label{DistNonIncreasing}
\\& = g_0(dF(v), dF(v)),
\end{align}
where we are using the distance non-increasing assumption in \eqref{DistNonIncreasing}. 
\end{proof}

\subsection{Volume-Distance Rigidity Theorem}

Our main theorem is an almost rigidity theorem for the following well known rigidity theorem.  

\begin{thm}\label{rigidity-thm}
Suppose $M_1=(M,g_1)$ and $M_0=(M,g_0)$ are a pair of Riemannian manifolds, and $F: M_1 \to M_0$
is a $C^1$ diffeomorphism that is distance non-increasing 
\be
d_0(F(p),F(q))\le d_1(p,q)\qquad \forall p,q\in M_1
\ee
then
\be \label{v-0-v-1}
\vol_0(M_0) \le \vol_1(M_1).
\ee
Furthermore if $\vol_1(M_1)=\vol_0(M_0)$ then they are isometric
\be \label{d-0-d-1}
d_0(F(p),F(q))= d_1(p,q)\qquad \forall p,q\in M_1.
\ee
\end{thm}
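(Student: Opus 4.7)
The plan is to reduce to a Jacobian comparison via the first \lemref{DistToMetric} and then use that the Jacobian bound is saturated pointwise in the equality case.

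First, since $F$ is a $C^1$ diffeomorphism that is distance non-increasing, the first Lemma already proved in this section (Lemma on distance-to-metric) gives the pointwise bilinear form inequality
\be
g_0\bigl(dF_p(v), dF_p(v)\bigr) \le g_1(v,v) \qquad \forall\, p\in M_1,\; v\in T_pM_1.
\ee
Equivalently, $F^*g_0 \le g_1$ as symmetric bilinear forms on $TM_1$. Pick at each $p$ an orthonormal basis for $g_1$ that simultaneously diagonalizes $F^*g_0$, with eigenvalues $0\le \lambda_1(p)\le \cdots \le \lambda_m(p)\le 1$. Then the Jacobian of $F$, viewed via the two metrics, is
\be
J_F(p) \;=\; \sqrt{\det\bigl(g_1^{-1} F^*g_0\bigr)(p)} \;=\; \prod_{i=1}^m \lambda_i(p)^{1/2} \;\le\; 1.
\ee
Since $F$ is a $C^1$ diffeomorphism, the change-of-variables formula gives
\be
\vol_0(M_0) \;=\; \int_{M_1} J_F(p)\, d\vol_{g_1}(p) \;\le\; \vol_1(M_1),
\ee
which establishes \eqref{v-0-v-1}.

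Now suppose $\vol_1(M_1) = \vol_0(M_0)$. Then $\int_{M_1}(1 - J_F)\, d\vol_{g_1} = 0$ with the integrand continuous and nonnegative, hence $J_F \equiv 1$ on $M_1$. The linear-algebra step (which I expect to be the only slightly subtle point) is that $\lambda_1\cdots \lambda_m = 1$ combined with $\lambda_i \le 1$ for all $i$ forces $\lambda_i = 1$ for all $i$; therefore $F^*g_0 = g_1$ identically, so $F$ is a Riemannian isometry.

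Finally, once $F^*g_0 = g_1$, for any $C^1$ curve $C\colon[0,1]\to M_1$ we have $L_{g_0}(F\circ C) = L_{g_1}(C)$, and since $F$ is a diffeomorphism, taking infima over curves joining $F(p)$ to $F(q)$ in $M_0$ (which correspond to curves from $p$ to $q$ in $M_1$) yields
\be
d_0\bigl(F(p),F(q)\bigr) \;=\; d_1(p,q) \qquad \forall\, p,q\in M_1,
\ee
so $F$ is a distance-preserving bijection, i.e.\ an isometry of metric spaces. The main obstacle is purely the diagonalization / AM-GM style step above; everything else is a direct application of the first \lemref{DistToMetric} and the standard change-of-variables formula.
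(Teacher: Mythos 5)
Your argument is correct and follows essentially the same route as the paper: apply the earlier distance-to-metric lemma to obtain the pointwise bilinear form inequality, diagonalize to compare determinants (you phrase it as a Jacobian and change of variables on $M_1$, the paper as $\sqrt{Det_{g_0}(g_1)}\ge 1$ after pulling the metrics to a common manifold, which is the same computation), integrate to get the volume inequality, and in the equality case force all eigenvalues to equal $1$ by continuity and the one-sided bound. No substantive difference.
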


For completeness of exposition we include a proof of this rigidity theorem and then follow this with an explanation
as to the difficulties which arise when trying to prove an almost rigidity version of this theorem.

\begin{proof}
We begin by proving the inequality (\ref{v-0-v-1}) through a series of inequalities. Starting with
\be
d_0(F(p),F(q))\le d_1(p,q)\qquad \forall p,q\in M_1
\ee
and applying Lemma~\ref{DistToMetric}, we have
\begin{equation}
g_0 (dF(v), dF(v))  \leq g_1 (v,v) \qquad \forall v\in TM_1.
\end{equation}
By pushing the metric $g_0$ forward through the map $F$ we can 
 without loss of generality consider $g_1 = F^*g_1$ in order to write
\begin{equation}
g_1 (v, v)  \ge g_0 (v,v) \qquad \forall v\in TM_0.
\end{equation}
In particular the eigenvalues of $g_1$ with respect to $g_0$:
\be
\lambda \textrm{ such that } \exists v_\lambda \textrm{ such that } g_1 (v_\lambda, v_\lambda) =\lambda g_0 (v_\lambda,v_\lambda)
\ee
must all have $\lambda\ge 1$.   Taking the product of these eigenvalues we have
\be
Det_{g_0}(g_1) \ge 1
\ee
Since for any Borel set $A \subset M$
\be
\vol_{g_1}(A) =\int_A \sqrt{Det_{g_0}(g_1)} \, dvol{g_0} \ge \int_A 1 \, dvol_{g_0} = \vol_{g_0}(A)
\ee
we have (\ref{v-0-v-1}) as desired.  

Now we prove the rigidity by observing that all the inequalities above become equalities
when the final line has an equality.  We start with $\vol_1(M_1)=\vol_0(M_0)$, then we are forced to have equality for any
Borel set $A \subset M$
\be
\vol_{g_1}(A) =\int_A \sqrt{Det_{g_0}(g_1)} \, dvol_{g_0} = \int_A 1 \, dvol_{g_0} = \vol_{g_0}(A)
\ee
and so by continuity
\be
Det_{g_0}(g_1) = 1.
\ee
Hence all the eigenvalues are equal to $1$  and hence
\be
g_1=g_0.
\ee
Returning to the use of $F$ we have 
\be
g_0 (dF(v), dF(v)) = g_1 (v,v)
\ee
which by Lemma~\ref{DistToMetric} gives us (\ref{d-0-d-1}).
\end{proof}

To prove an almost rigidity theorem one then starts with an almost equality in the final line,
or assume 
\be
\lim_{j\to \infty} \vol_j(M_j)=\vol_0(M_0).
\ee
We can then show that for any Borel set $A\subset M_0$
\be
\vol_{g_j}(A) =\int_A \sqrt{Det_{g_0}(g_j)} \, dvol_{g_0} \to  \int_A 1 \, dvol_{g_0} = \vol_{g_0}(A)
\ee
which will be done within this paper carefully.
However one cannot conclude
\be
\sqrt{Det_{g_0}(g_j)} \to 1.
\ee 
In fact we will see this is not well controlled at all.  Instead we will apply a theorem of the first
and third authors from \cite{Allen-Sormani-2} which chooses special sets $\mathcal{T}=\mathcal{T}_{p,q}$ that can be thought of as thin cylinders 
around geodesics from $p$ to $q$ so that
\be
\vol_{g_j}( \mathcal T_{p,q}) \textrm{ is close to } \omega_{m-1}\epsilon^{m-1} d_j(p,q)
\ee
and eventually show that there is a subsequence such that
\be 
d_j (p,q) \to d_0(p,q) \textrm{ pointwise almost everywhere } (p,q) \in M\times M.
\ee
We review this theorem in the final subsection of the background.

This paper is dedicated to proving intrinsic flat convergence using this
control on the distances combined with the bounds on volume and diameter.  

\subsection{Review of the Intrinsic Flat Distance}\label{subsect:SWIFBackground}

In \cite{SW-JDG}, Sormani-Wenger  defined the intrinsic flat distance between pairs of oriented Riemannian manifolds with boundary as follows:
\be\label{defn-IF}
d_{\mathcal{F}}(M_1^m, M_2^m) = \inf d_F^Z\left(\varphi_{1\#} [[M_1]], \varphi_{2\#} [[M_2]]\right)
\ee
where the infimum is taken over all complete metric spaces $Z$ and all distance preserving maps $\varphi_i: M_i \to Z$,
\be
d_Z(\varphi_i(p), \varphi_i(q)) = d_i(p,q) \qquad \forall p,q\in M_i.
\ee
Here the flat distance between the images of $M_i$, viewed as integral currents,
\be
T_i=\varphi_{i\#} [[M_i]]\in I_m(Z),
\ee
 is defined
\begin{equation} \label{dFZ}
d_F^Z(T_1, T_2)= \inf \left(\mass(A) + \mass(B)\right)
\end{equation}
where the infimum is over all integral currents, $ A \in I_m(Z), \, B \in I_{m+1}(Z)$ such that
$A + \partial B  =  T_1 - T_2$.
 To rigorously understand this definition one needs Ambrosio-Kirchheim theory which we review the essential 
 elements of below.  

The intuitive idea is that the intrinsic flat distance is measuring the volume between the two Riemannian manifolds.  To estimate the intrinsic flat distance, one first embeds them into a common metric space $Z$ without distorting distances,
then one finds an oriented rectifiable submanifold $A$ so that 
the images $\varphi_i(M_i)$ and $A$ form the boundary of an oriented rectifiable submanifold $B$ of one dimension higher,  and then one bounds the intrinsic flat distance from above by the sum of the volumes of $A$ and $B$.  One needs generalized weighted submanifolds called integral currents to find the precise value of the intrinsic flat distance.  These
currents were first defined by Federer-Flemming \cite{FF} in Euclidean space
and
by Ambrosio-Kirchheim for complete metric spaces in \cite{AK}.

In \cite{AK}, Ambrosio-Kirchheim defined the class of $m$-dimensional integral currents, $T\in I_m(Z)$,
in a complete metric space $Z$ as integer rectifiable currents whose boundaries are also integer rectifiable.  
Since there are
no differential forms on metric spaces, Ambrosio-Kirchheim defined currents as acting on tuples
$(f, \pi_1,...,\pi_m)$, where $f: Z\to {\mathbb{R}}$ is a bounded Lipschitz function and each
$\pi_j: Z\to {\mathbb{R}}$ is Lipschitz, rather than forms $f d\pi_1 \wedge \cdots d\pi_m$.  They define
\be
\varphi_{\#} [[M]](f, \pi_1,...\pi_m) = \int_{M} (f\circ \varphi)\, d(\pi_1 \circ \varphi)\wedge \cdots \wedge d(\pi_m \circ \varphi)
\ee
which is well defined for any oriented Riemannian manifold with or without boundary and any Lipschitz function $\varphi: M\to Z$.
More generally an $m$ dimensional integer rectifiable current, $T$, can be parametrized by
a countable collection of biLipschitz charts, $\varphi_i: A_i \to \varphi_i(A_i)\subset Z$ where 
$A_i$ are Borel in ${\mathbb R}^m$ with pairwise disjoint images and integer weights $\theta_i\in {\mathbb Z}$ such that
\be
T(f, \pi_1,...\pi_m) = \sum_{i=1}^\infty \theta_i \int_{A_i} (f\circ \varphi_i)\, d(\pi_1 \circ \varphi_i)\wedge \cdots \wedge d(\pi_m \circ \varphi_i)
\ee
has finite mass, $\mass(T)=||T||(Z)$.  Their definition of mass and mass measure $||T||$
is subtle for currents in general but in Section 9 of \cite{AK}, they prove that for rectifiable currents
\be
||T||=  \lambda \theta \mathcal{H}^m
\ee
where $\theta$ is an integer valued function and the area factor
$\lambda:  \set(T)  \to \R$ is a measurable function bounded above by 
\be \label{C_m}
C_m=2^m/\omega_m \textrm{ where } \omega_m=\vol_{{\mathbb E}^m}(B_0(1)).
\ee
So that
\be
\mass(T) \le C_m  \sum_{i=1}^\infty |\theta_i| \mathcal{H}^m( \varphi_i(A_i)) < \infty.
\ee

Ambrosio-Kirchheim define the boundary of any current  to be
\be
\partial T(f, \pi_1,...\pi_{m-1})= T(1,f, \pi_1,...\pi_{m-1}).
\ee
This agrees with the notion of the boundary of a submanifold:
\begin{eqnarray}
\qquad 
\partial \varphi_{\#}[[M]](f, \pi_1,...\pi_{m-1})&=&
\varphi_{\#}[[M]](1, f, \pi_1,...\pi_{m-1})\\
&=&
\int_{M} d(f\circ \varphi)\wedge d(\pi_1 \circ \varphi)\wedge \cdots \wedge d(\pi_m \circ \varphi)\\
&=& \int_{\partial M} (f\circ \varphi)\, d(\pi_1 \circ \varphi)\wedge \cdots \wedge d(\pi_m \circ \varphi)\\
&=&\varphi_{\#}[[\partial M]](f, \pi_1,...\pi_{m-1}).
\end{eqnarray}
They define an $m$ dimensional integral current to be an integer rectifiable current, $T$, whose
boundary $\partial T$ is also integer rectifiable.  With this information(\ref{dFZ}) is well defined and finite.

Note that the definition of intrinsic flat convergence in (\ref{defn-IF}) does not require $M_j$ to be smooth Riemannian
manifolds.  In \cite{SW-JDG} the distance is defined between a larger class of spaces called integral current spaces.  
We do not need to consider general integral current spaces in this paper.  However it is worth observing that the
definition as in (\ref{defn-IF}) can be understood for a pair of $C^1$ oriented manifolds, $M_j$, endowed with
metric tensors $g_j$ that need not even be continuous, just so long as the $C^0$ charts are biLipschitz with 
respect to the distance functions:
\be
d_j: M_j \times M_j \to [0,\infty)
\ee
defined by
\be \label{djdefn}
d_j(p,q) = \inf\{L_j(C):\, C(0)=p, \, C(1)=q\}
\ee
where
\be\label{Ljdefn}
L_j(C)=\int_0^1 g_j(C'(s),C'(s))^{1/2}\, ds.
\ee
We see that such manifolds can arise as intrinsic flat limits of sequences of smooth
Riemannian manifolds in the next section.

\subsection{Convergence of Metrics on a Fixed Manifold}

In the Appendix to \cite{HLS}, Lan-Hsuan Huang, Dan Lee, and the third author considered sequences
of distance functions on a fixed metric space just as we do here except with significantly stronger hypotheses.  
Here we restate their appendix theorem in the simplified setting where $M^m$
is a manifold and $d_j$ are defined as in (\ref{djdefn})-(\ref{Ljdefn}).   We state this theorem because it is applied to prove the convergence of some of the examples and because its proof inspired some of our ideas.  We do not apply this theorem to prove our Theorem~\ref{vol-thm} because the hypotheses of this theorem are too strong.

\begin{thm}\label{app-thm}\cite{HLS}
Given $(M, d_0)$ Riemannian without boundary and fix
$\lambda>0$, suppose that
$d_j$ are length metrics on $M$ such that
\be\label{HLS-d_j}
\lambda \ge \frac{d_j(p,q)}{d_0(p,q)} \ge \frac{1}{\lambda}.
\ee
Then there exists a subsequence, also denoted $d_j$,
and a length metric $d_\infty$ satisfying (\ref{HLS-d_j}) such that
$d_j$ converges uniformly to $d_\infty$:
\be\label{HLS-epsj}
\varepsilon_j= \sup\left\{|d_j(p,q)-d_\infty(p,q)|:\,\, p,q\in X\right\} \to 0.
\ee 
and $M_j$ converges in the intrinsic flat and Gromov-Hausdorff sense to $M_\infty$:
\be
M_j \Fto M_\infty \textrm{ and } M_j \GHto M_\infty
\ee
where $M_j=(M,d_j)$ and $M_\infty=(M, d_\infty)$.
\end{thm}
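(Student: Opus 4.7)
The plan is to extract a uniformly convergent subsequence via Arzel\`a--Ascoli, verify the limit is a length metric, and then deduce both convergence notions by exploiting the bi-Lipschitz bound \eqref{HLS-d_j}.

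First I would regard each $d_j$ as a continuous function on the compact product $M\times M$. The triangle inequality together with $d_j\le \lambda d_0$ gives
\[
|d_j(p,q)-d_j(p',q')|\le d_j(p,p')+d_j(q,q')\le \lambda\bigl(d_0(p,p')+d_0(q,q')\bigr),
\]
so the family is equi-Lipschitz in $d_0$ and uniformly bounded by $\lambda \diam(M,d_0)$. Arzel\`a--Ascoli then yields a subsequence (still labelled $d_j$) converging uniformly to some continuous $d_\infty : M\times M \to [0,\infty)$, which is exactly \eqref{HLS-epsj}. Nonnegativity, symmetry, the triangle inequality, and the two-sided bound $\lambda^{-1}d_0\le d_\infty\le \lambda d_0$ pass to the uniform limit, so $d_\infty$ is a metric satisfying \eqref{HLS-d_j}.

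Next I would promote $d_\infty$ to a length metric. For each $p,q\in M$ and $\varepsilon>0$, each length metric $d_j$ admits an approximate midpoint $m_j\in M$ with $\max\{d_j(p,m_j),d_j(m_j,q)\}\le \tfrac12 d_j(p,q)+\varepsilon$. Compactness of $(M,d_0)$ produces a subsequential limit $m_\infty\in M$, and uniform convergence of $d_j$ promotes $m_\infty$ to an approximate midpoint for $d_\infty$. Since $(M,d_\infty)$ is complete (being bi-Lipschitz equivalent to the Riemannian space $(M,d_0)$), the existence of $\varepsilon$-midpoints for every pair and every $\varepsilon>0$ is equivalent to $d_\infty$ being a length metric. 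For Gromov--Hausdorff convergence, the identity map on $M$ is a correspondence between $M_j$ and $M_\infty$ of distortion $\varepsilon_j$, so $d_{GH}(M_j,M_\infty)\le \varepsilon_j/2\to 0$.

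For intrinsic flat convergence I would build a common metric space $Z_j=M\times[0,\varepsilon_j]$ carrying a cylindrical length metric whose slice at height $0$ equals $d_j$, whose slice at height $\varepsilon_j$ equals $d_\infty$, and whose vertical increments equal $|s-t|$; the closeness estimate $|d_j-d_\infty|\le \varepsilon_j$ is exactly what is needed for the embeddings $\varphi_j(p)=(p,0)$ and $\varphi_\infty(p)=(p,\varepsilon_j)$ to be isometric (paths through the cylinder create no short-cuts). The filling current then satisfies
\[
\partial [[M\times[0,\varepsilon_j]]]=\varphi_{\infty\#}[[M]]-\varphi_{j\#}[[M]],
\]
and its mass is bounded by $C(\lambda,m)\,\vol_0(M)\,\varepsilon_j\to 0$ via the bi-Lipschitz comparison with the Riemannian volume of $(M,d_0)$, giving $d_{\mathcal F}(M_j,M_\infty)\to 0$. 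The main obstacle is this last step: constructing the cylinder metric on $Z_j$ so that its restrictions to the top and bottom slices actually equal $d_\infty$ and $d_j$, rather than merely dominating them, while simultaneously keeping the mass of the cylindrical filling under control. The bi-Lipschitz bound \eqref{HLS-d_j} is essential both to rule out short-cuts in the infimum defining $d_{Z_j}$ and to convert $(m{+}1)$-dimensional mass computations back to the fixed Riemannian volume of $(M,d_0)$.
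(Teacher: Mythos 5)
Your proposal is correct and follows essentially the same route as the paper, which itself defers the details to the Appendix of \cite{HLS}: the two-sided bound \eqref{HLS-d_j} gives equicontinuity and hence a uniformly convergent subsequence by Arzel\`a--Ascoli, the identity correspondence gives Gromov--Hausdorff convergence, and a cylinder $M\times[0,\varepsilon_j]$ filled by its fundamental class (with $A=0$, $B=[[Z_j]]$) gives $d_{\mathcal{F}}(M_j,M_\infty)\le C(\lambda,m)\,\varepsilon_j\,\vol_0(M)\to 0$. The step you honestly flag as the main obstacle---making both boundary slices genuinely isometric to $(M,d_j)$ and $(M,d_\infty)$ with cylinder height only of order $\varepsilon_j$---is exactly what \cite{HLS} resolves with an explicit, non-product distance formula on $[-\varepsilon_j,\varepsilon_j]\times M$; note that a naive length-space product metric would instead force a height of order $\sqrt{\varepsilon_j D}$, as in the shortcut analysis of \lemref{cnstr-Z}.
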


Note that the hypotheses of our main theorem, Theorem~\ref{vol-thm}, do not imply the upper bound in the 
hypothesis (\ref{HLS-d_j}) of Theorem~\ref{app-thm}.   Yet this upper bound is crucially applied in the Appendix to \cite{HLS} to obtain the
existence of a subsequence which converges uniformly as in (\ref{HLS-epsj}) and that uniform convergence
is applied to provide an 
 explicit construction of the common metric space 
\be
Z_j= [-\varepsilon_j, \varepsilon_j] \times M
\ee
with an explicit distance function $d_j'$ on $Z_j$ such that
\be \label{iso-}
d'_j((-\varepsilon_j,p), (-\varepsilon_j,q)) = d_j(p,q)
\ee
\be \label{iso+}
d'_j((\varepsilon_j,p), (\varepsilon_j,q)) = d_\infty(p,q).
\ee
Taking $A=0$ and $B=[[Z_j]]$ the intrinsic flat distance is then 
proven to be
\be\label{Fj}
d_{\mathcal{F}}\left(M_j, M_\infty \right) \le \mass(A)+\mass (B) \le 
2^{(n+1)/2} \lambda^{n+1} 2\varepsilon_j \vol_0(M) \to 0.
\ee
The proof of our main theorem, Theorem~\ref{vol-thm} will also involve the explicit construction of a space $Z$.

\subsection{Examples Without Distance Bounded Below}\label{subsect:PreviousWorkContrasting}

In \cite{Allen-Sormani} and \cite{Allen-Sormani-2}, the first and third authors presented a number of examples comparing and contrasting various notions of convergence for Riemannian manifolds.   The examples in \cite{Allen-Sormani} were warped products and the examples in \cite{Allen-Sormani-2} were conformal.   Here we present two crucial examples from these papers demonstrating the importance of the lower bounds on distance, $g_j \ge g_0$ in Theorem \ref{vol-thm}
and the $C^0$ control on the metric tensor from below in Corollary~\ref{cor-vol-thm}.  In these examples
we have an upper bound on distance $g_j \le g_0$ and $\vol_j \to \vol_0$ but $M_j$
converge to something other than $M_0$. 

In the first example, which is Example 3.1 in \cite{Allen-Sormani-2}, we have a sequence of conformal metric tensors on the sphere that are shrunk near the equator so that one obtains a cinched sphere as the  intrinsic flat limit instead of the round sphere.   See also
Example 3.4 in \cite{Allen-Sormani}.

\begin{ex} \label{Cinched-Sphere}  \cite{Allen-Sormani-2}
Let $g_0$ be the standard round metric on the sphere, ${\mathbb S}^m$.   Let $g_j=f_j^2 g_0$ be
metrics conformal to $g_0$ with smooth conformal factors, $f_j$,
that are radially defined from the north pole with a cinch at the equator as follows:  
 \be
 f_j(r)=
 \begin{cases}
 1 & r\in[0,\pi/2- 1/j]
 \\  h(jr-\pi/2) & r\in[\pi/2- 1/j, \pi/2+ 1/j]
 \\ 1 &r\in [\pi/2+ 1/j, \pi]
 \end{cases}
\ee
where $h:[-1,1]\rightarrow \R$ is an even function 
decreasing to $h(0)=h_0\in (0,1)$ and then
increasing back up to $h(1)=1$.   Observe that
\begin{align}
g_0 \not \le g_j \textrm{ but instead } g_j \le g_0,
\end{align}
\be
\vol_j(\Sp^m) \to \vol_0(\Sp^m),
\ee
and 
\be
\diam_j(\Sp^m) \le \diam_0(\Sp^m).
\ee
In \cite{Allen-Sormani-2}, it is proven that
\be
M_j \Fto M_{\infty}
\ee
where  $M_{\infty}$ is not isometric to $\Sp^m$. Instead $M_{\infty}$ is endowed with the conformal metric,
$g_\infty=f_\infty^2g_0$ with 
a piecewise conformal factor that is not continuous:
 \be
 f_{\infty}(r)=
 \begin{cases}
 h_0 & r=\pi/2
 \\  1 &\text{ otherwise}
 \end{cases}.
 \ee
 The distance, $d_\infty$, between pairs of points near the equator in this limit space is defined as
 in (\ref{djdefn})-(\ref{Ljdefn}).  It 
 is achieved by geodesics which run to the
 equator, and then around inside the cinched equator, and then out again.  
 
To prove this convergence in \cite{Allen-Sormani-2}, Theorem~\ref{app-thm} was applied to show a subsequence $d_j$ converges uniformly to some distance function and then it was shown explicitly that $d_j$ converge pointwise to $d_\infty$, thus $d_\infty$ is the
uniform limit of in fact any subsequence.  Theorem~\ref{app-thm} then implied $(M,d_\infty)$ was the
intrinsic flat and Gromov-Hausdorff limit as well.
\end{ex}

Some might point out that in the above example the limit space is locally isometric to a standard sphere almost everywhere, and that perhaps it is thus not so different from a standard sphere.  In the next example, which is Example 3.12 in \cite{Allen-Sormani}, we see that the limit space need not even be locally isometric
to $M_0$ anywhere.  In fact it need not even be Riemannian but could instead be a Finsler manifold with a symmetric norm that is not an inner product.

\begin{ex} \label{to-Finsler} \cite{Allen-Sormani}
Let $M={\mathbb T}^2$ be a torus with warped product metrics $g_j=dr^2 + f_j(r)^2 d\theta^2$
where smooth $f_j: [-\pi, \pi]\to [1,5]$ are defined so that
\be
g_j \le g_0=dr^2 + 5^2 d\theta^2 \textrm{ and } \vol_j \to \vol_0
\ee
but the  $f_j$ are cinched on an increasingly dense set, so that the $d_j$ converge
uniformly to a distance, $d_\infty$, that is Finsler, and $M_j\Fto M_\infty$.
Taking
\begin{eqnarray}
 S&=&\left\{s_{i,j}=-\pi + \tfrac{2\pi i}{2^j}\,: \,  i=1,2,... (2^j-1),\, j\in \mathbb{N}\right\}\\
&=& \left\{-\pi + \tfrac{2\pi}{2},-\pi+\tfrac{2\pi}{4}, -\pi + \tfrac{2\pi 2}{4}, -\pi + \tfrac{2\pi3}{4}, 
-\pi + \tfrac{2\pi}{8},...\right\}
 \end{eqnarray}
 which is dense in $[-\pi,\pi]$
 and
 \be
 \{\delta_j=(1/2)^{2j}:\, j \in \mathbb{N}\} =\{1/4, 1/16, 1/32,...\} 
 \ee
 one can define the functions $f_j$  that are cinched on this set $S$ as follows
  \be
 f_j(r)=
 \begin{cases}
  h((r-s_{i,j})/\delta_j ) & r\in [s_{i,j}-\delta_j, s_{i,j} +\delta_j] \textrm{ for } i =1...2^j-1
 \\ 5 & \textrm{ elsewhere }
 \end{cases}
\ee
where $h$ is an even function such that 
$h(-1)=5$  decreasing down to $h(0)=1$ and then
increasing back up to $h(1)=5$.  

Then $f_j(r)\ge 1$ converges pointwise
to 1 on the dense set, $S$, and pointwise to $5$ elsewhere.   This causes the existence of so many shorter
paths in the limit space that $d_j$ are shown in \cite{Allen-Sormani-2} to converge pointwise to
\be
d_{\infty}((s_1,\theta_1),(s_2,\theta_2))= \min \left\{  \sqrt{s^2 + 5^2 \theta^2} ,
 s\left( \tfrac{\sqrt{24}}{5} \right)+ \theta \right\}
\ee
where $s=d_{{\mathbb S}^1}(s_1,s_2)$ and $\theta=d_{{\mathbb S}^1}(\theta_1, \theta_2)$.
To obtain uniform, intrinsic flat and Gromov-Hausdorff convergence, Theorem~\ref{app-thm} was applied.
\end{ex}

We  encourage the reader to explore the other examples of sequences of conformal Riemannian metrics given by Allen and Sormani in \cite{Allen-Sormani-2} which provide further understanding of relationship between important notions of convergence in geometric analysis.   Keep in mind that the examples presented here are particularly nice because
we do have the strong two sided bounds required to apply Theorem~\ref{app-thm}.

\subsection{Ilmanen Example}

We now provide the details of Example~\ref{ex-Ilmanen} that was depicted in 
Figure~\ref{fig-Ilmanen} in the introduction.  It is a sequence of spheres $({\mathbb{S}}^3,g_j)$ with $g_j\ge g_0$ and $\vol_j({\mathbb{S}}^m) \to \vol_0({\mathbb{S}}^3)$ that has no Gromov-Hausdorff limit
but by our new Theorem~\ref{vol-thm} converges in the intrinsic flat sense to the standard round sphere, $({\mathbb{S}}^m, g_0)$.   In dimension $m=3$ this was presented in a talk by Ilmanen as an example of a sequence with positive scalar curvature and no Gromov-Hausdorff limit that ought to converge in some sense to
the standard sphere.  That example was described in more detail in the Appendix of \cite{SW-JDG} by the third author in part to justify that intrinsic flat convergence is the right notion of convergence for such a sequence.  Here we modify the construction slightly so that it is easier to see, and then discuss how it is related to this paper.   It is an important example to keep in mind when reading the proof of Theorem~\ref{vol-thm}.

\begin{ex} \label{ex-Ilmanen}  
Ilmanen presented a sequence of three dimensional spheres $M_j=({\mathbb S}^3, g_j)$
of positive scalar curvature
that had increasingly many increasingly thin wells with no Gromov-Hausdorff limit as in
Figure~\ref{fig-Ilmanen} (cf. \cite{SW-JDG}).
To construct the sequence one starts with $M_0=({\mathbb S}^3, g_0)$ with the standard round metric $g_0$.
One then choses a finite collection of points
\be
Q_j=\{q^j_1,q^j_2,...q^j_j\} \subset M_0
\ee
and a radius $\rho_j \to 0$ so that balls of radius $\rho_j$ centered at $q\in Q_j$ are pairwise disjoint.
They can be chosen to be increasingly dense and in fact we can always take the first and last 
to be opposite poles
\be
q^j_1=q_+ \textrm{ and } q^j_j =q_- \textrm{ so that } d_0(q^j_1, q^j_j)=\pi.
\ee
One then fixes a length $R>0$ and constructs
wells $(W_j,g_j)$ which are rotationally symmetric balls with positive scalar curvature such that
$B_p(R)\subset W_j$  has $\area_j(\partial B_p(r))$ increased from $0$ at $r=0$ to $4\pi \rho_j^2>0$ at
$r=R$.  At $r=R$ each well $W_j$ is smoothly attached to the standard sphere replacing each ball, $B_{q^j_i}(\rho_j) 
\subset M_0$ with a ball  $B_{p^j_i}(R) \subset M_j$.  Outside of the wells $g_j=g_0$.

Ilmanen took $\rho_j \to 0$ fast enough that $j \vol_j(W_j) \le j R 4\pi \rho_j^2 \to 0$ so that
\be
\vol_j(M_j) 
= \vol_0 \left( M_0\setminus \bigcup_{q\in Q_j} B_q(\rho_j) \right) + j \vol_j(W_j) \to  \vol_0(M_0).
\ee
Note also that $\diam_j(M_j) \le \pi + 2R$.   

One can construct a distance non-increasing diffeomorphism $F_j: M_j\to M_0$ by taking $F_j$ to be the identity map away from the wells and setting $F_j$ to be rotationally symmetric on each well determined by the requirement that
\be
F_j:\, \partial B_p(r) \subset M_j \,\,\to \,\, \partial B_q(\rho_j(r))
\textrm{ where } 4\pi (\rho_j(r))^2 = \area_j(\partial B_p(r)).
\ee
So by our new Theorem~\ref{vol-thm} we have a new proof that
\be
M_j=({\mathbb S}^m, d_j) \Fto M_0=({\mathbb S}^m, d_0).
\ee

As mentioned in the first section of the background, we can replace $g_j$ with $F_j^*g_j$ to view them as a sequence of metric tensors on a fixed manifold and the distance functions 
\be
d_j: M \times M \to [0, D]
\ee 
as a sequence of distance functions on a fixed manifold.  Note that we do not have pointwise convergence
of the distance functions.   Take for example the tips of the wells at the poles correspond to the
poles $p_+$ and $p_-$, and that
\be
d_0(p_-, p_+)= \pi \textrm{ however } d_j(p_-, p_+) \to \pi + 2 R
\ee
due to the depth of the wells.  
\end{ex}

We discuss convergence of the distance functions pointwise almost everywhere in the
next section.

\subsection{Volume to Pointwise Almost Everywhere Convergence}\label{subsect:PreviousWorkRelating}

An important theorem established by the first and third authors as Theorem 4.4 in \cite{Allen-Sormani-2} gives a way of obtaining pointwise convergence almost everywhere of the distance functions $d_j$ from the hypotheses of Theorem \ref{vol-thm}. We review the statement and proof of this theorem here since it will be applied in Section \ref{sect:ProofMainThm} to prove Theorem \ref{vol-thm}. 

\begin{thm}\label{PointwiseConvergenceAE}
If $(M, g_j)$ are compact continuous Riemannian manifolds without boundary  and $(M, g_0)$ is a smooth Riemannian manifold such that
\be
g_j(v,v) \ge g_0(v,v) \qquad \forall v\in T_pM
\ee
and
\be
\vol_j(M) \to \vol_0(M)
\ee
then there exists a subsequence such that 
\be
\lim_{j\to \infty} d_j(p,q) = d_0(p,q) \textrm{ pointwise a.e. } (p,q) \in M\times M,
\ee
where we use the volume with respect to $g_0 \oplus g_0$ on $M \times M$.
\end{thm}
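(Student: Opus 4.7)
Since $g_j \ge g_0$, every curve's length only grows, giving $d_j \ge d_0$ pointwise on $M \times M$; the content of the theorem is the matching upper bound $\limsup_j d_j \le d_0$ almost everywhere. My plan is to test $d_j$ against $g_0$-minimizing geodesics enclosed in thin Fermi tubes whose $g_j$-volumes are pinned by the volume hypothesis.

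First, I would convert volume convergence into pointwise density control. Let $\rho_j := \sqrt{\det_{g_0}(g_j)}$. Since all $g_0$-eigenvalues of $g_j$ are at least $1$, we have $\rho_j \ge 1$ pointwise and
\[
\vol_j(M) - \vol_0(M) \;=\; \int_M (\rho_j - 1)\, d\vol_{g_0} \;\longrightarrow\; 0,
\]
so $\rho_j \to 1$ in $L^1(M, d\vol_{g_0})$. Consequently $\vol_j(A) \to \vol_0(A)$ uniformly over Borel $A \subset M$, and along a subsequence $\rho_j \to 1$ pointwise a.e. The same eigenvalue reasoning gives the pointwise bound $\rho_j(x) \ge \sqrt{g_j(v,v)/g_0(v,v)}$ for every nonzero $v \in T_xM$ (since the product of the eigenvalues dominates the maximum once they are all $\ge 1$, and the ratio $g_j(v,v)/g_0(v,v)$ is dominated by the maximum eigenvalue). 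This is the key device converting volume slack into length slack.

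Now fix $(p,q)$ outside the $g_0$-cut locus (a conull subset of $M \times M$), let $\gamma^0$ be the unique minimizing $g_0$-geodesic of length $L = d_0(p,q)$, and pick $\epsilon$ less than its normal injectivity radius. Working in $g_0$-Fermi coordinates $\phi : [0,L] \times B^{m-1}_\epsilon \to \mathcal{T}_\epsilon$ and applying the pointwise inequality to $v = \partial_s$ inside $\vol_j(\mathcal{T}_\epsilon) = \int_{\mathcal{T}_\epsilon} \rho_j\, d\vol_{g_0}$ produces the Fubini sandwich
\[
(1 - C\epsilon)\int_{|y|<\epsilon} L_{g_j}\bigl(\phi(\cdot,y)\bigr)\, dy \;\le\; \vol_j(\mathcal{T}_\epsilon) \;\le\; \omega_{m-1}\epsilon^{m-1} L(1 + C\epsilon) + \eta_j,
\]
where $\eta_j := \vol_j(M) - \vol_0(M) \to 0$ is independent of $(p,q)$ and the upper estimate on $\vol_j(\mathcal{T}_\epsilon)$ uses the uniform measure control from step one. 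Using $L_{g_j}(\phi(\cdot,y)) \ge d_j(\phi(0,y), \phi(L,y))$, dividing by $\omega_{m-1}\epsilon^{m-1}$, and sending $j \to \infty$ with $\epsilon$ fixed and then $\epsilon \to 0$, the average of $d_j$ over transversally perturbed endpoints is controlled by $d_0(p,q)$; continuity of $d_j$ and Lebesgue differentiation in the parameter $y$ pass to the pointwise bound $\limsup_j d_j(p,q) \le d_0(p,q)$ for a.e.\ $(p,q)$, which combined with the reverse inequality gives the full limit along a further subsequence.

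The main obstacle I expect is the decoupling of the limits $j \to \infty$ and $\epsilon \to 0$ without a quantitative rate on $\eta_j$: one needs $\epsilon$ small so the Fermi tube is nearly Euclidean, yet large enough that $\eta_j/\epsilon^{m-1}$ remains small. I would resolve this via a diagonal choice $\epsilon = \epsilon_j \to 0$ slow enough that $\eta_j/\epsilon_j^{m-1} \to 0$, combined with Egoroff on the good set $\{\rho_j \to 1\}$ to ensure the Fermi-tube estimate is uniform in the transverse parameter $y$; a secondary difficulty is that the normal injectivity radius depends on $(p,q)$, which I would handle by restricting the argument to the sublevel set $\{(p,q) : \text{normal inj.\ rad.\ of } \gamma_{p,q}^0 > \epsilon\}$, whose measure exhausts $M \times M$ as $\epsilon \to 0$.
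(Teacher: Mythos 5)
Your reduction to showing $\limsup_j d_j \le d_0$ a.e., your density function $\rho_j = \sqrt{\det_{g_0}(g_j)}$ with the eigenvalue inequality $\rho_j \ge \sqrt{g_j(v,v)/g_0(v,v)}$ (valid because all $g_0$-eigenvalues of $g_j$ are $\ge 1$), and your Fermi-tube sandwich are exactly the engine used in the paper's proof. The uniform Borel-measure control $\vol_j(A)-\vol_0(A)\le\eta_j$ is also correct since $\rho_j-1\ge 0$. So the core mechanism is right, and the exhaustion by $\{(p,q): \text{normal inj.\ rad.} > \epsilon\}$ mirrors the paper's exhaustion of the complement of the cut locus.

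However, there is a genuine gap in the final step, and it is precisely the step the paper flags as the delicate one. What the tube estimate actually delivers, after dividing by $\omega_{m-1}\epsilon^{m-1}$ and using $d_j\ge d_0$, is that the nonnegative quantity $d_j(\phi(0,y),\phi(L,y))-d_0(\phi(0,y),\phi(L,y))$ has small average over $|y|<\epsilon$, hence is small for almost every $y$ on the transverse disk. That is control over an $(m-1)$-parameter family of pairs $(p',q')$ in which $q'$ is rigidly slaved to $p'$ (the two endpoints of the same geodesic leaf). This family has measure zero in $M\times M$, so it does not yield ``a.e.\ $(p,q)\in M\times M$.'' Your proposed fix via Lebesgue differentiation in $y$ does not close the gap: to pass from the disk average to the center value you must send $\epsilon\to 0$, but then $\eta_j/\epsilon^{m-1}$ blows up for fixed $j$; and a diagonal $\epsilon_j\to 0$ only controls the average of $d_j$ over the disk of radius $\epsilon_j$, which cannot be compared to $d_j(p,q)$ without a modulus of continuity for $d_j$ that is \emph{uniform} in $j$. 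You do not have that: only $d_j\ge d_0$ is available, and $d_j(p,\phi(0,y))$ can be large for $y$ arbitrarily small (Ilmanen's wells are exactly this phenomenon). The paper resolves the issue not by differentiation but by building a $2m$-parameter deformation family $\mathcal{N}_{p,q}$ (varying the direction $v$, a translation $\tau$ along the geodesic, a scaling $\eta$ of its length, and the transverse offset $w$), showing the map $\Psi_{p,q}:\mathcal{N}_{p,q}\to U_{p,q}\subset M\times M$ is a bijection onto an open neighborhood of $(p,q)$, and integrating the tube estimate over all these parameters with Dominated Convergence to get $\int_{U_{p,q}}|d_j-d_0|\,d\mu\to 0$, after which a countable cover and diagonal subsequence finish the job. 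Without this (or some substitute decoupling $p'$ from $q'$ so that the pairs sweep out a set of positive $2m$-dimensional measure), your argument stops one essential step short of the stated conclusion.
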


Since this theorem is so fundamental to the proof of our results in this paper, we outline the proof here.  The details
required for all the estimates are in \cite{Allen-Sormani-2}.   When reading the proof consult Figure~\ref{fig-APS-tubes}.

\begin{proof}
In \cite{Allen-Sormani-2}, the first and third authors first show 
that for any Borel set $\vol_j(U) \ge \vol_0(U)$ because $g_j\ge g_0$.
Since $\vol_j(M)\to \vol_0(M)$, they further prove that
\be\label{VolumeSetsConverge}
    Vol_j(U) = \int_U \sqrt{Det_{g_0}(g_j)}\, dV_{g_0} \rightarrow Vol_0(U)=\int_U dV_{g_0}.
\ee
They next apply this to tubes of $g_0$-geodesics as depicted in Figure~\ref{fig-APS-tubes}.

\begin{figure}[h] 
   \center{\includegraphics[width=.6\textwidth]{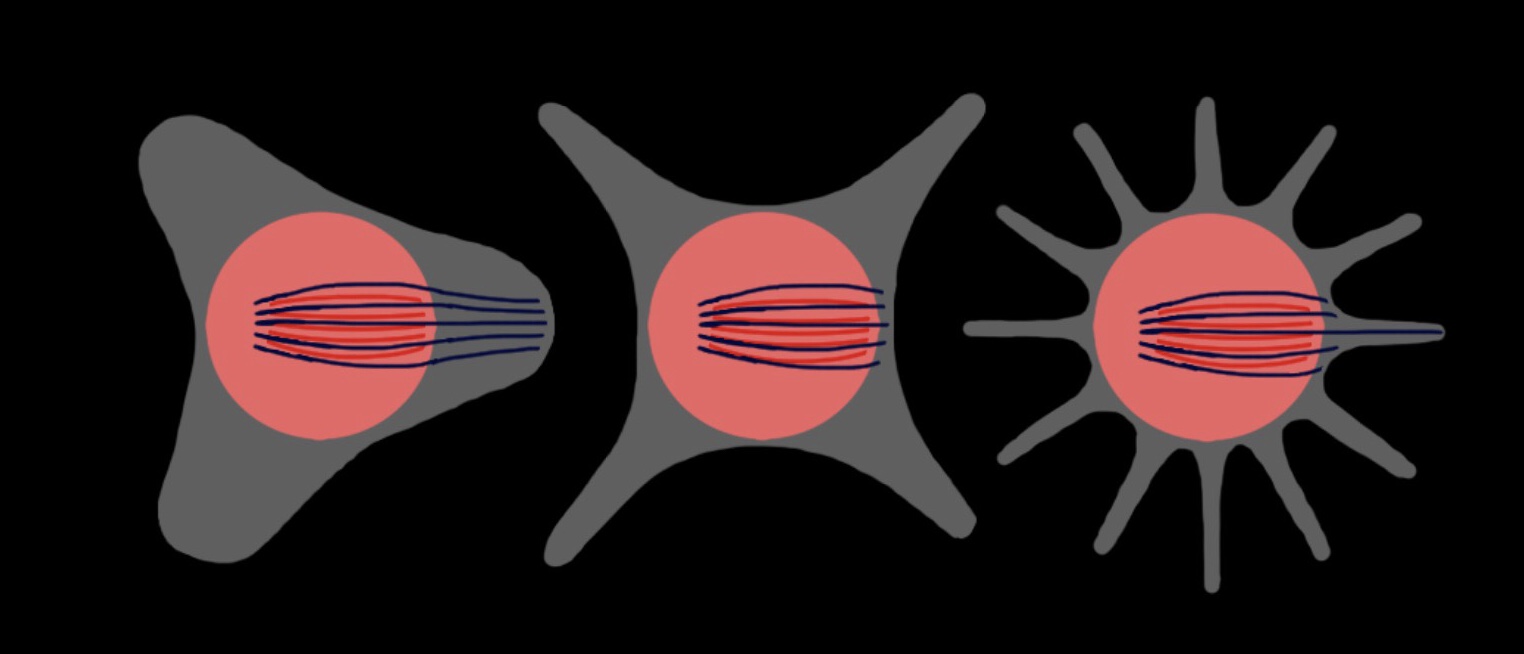}}
\caption{A tube $\mathcal{T}$ foliated by $g_0$-geodesics, $\gamma$, 
with $L_j(\gamma)\ge L_0(\gamma)$ has $\vol_j(\mathcal{T})\to \vol_0(\mathcal{T})$ so $L_j(\gamma)\to L_0(\gamma)$ for almost every $\gamma$ but not for $\gamma$ ending at a tip.}
   \label{fig-APS-tubes}
\end{figure}

Since they only wish to show pointwise almost everywhere convergence, they consider
 $p,q \in M$ so that $q$ is not a cut point of $p$ with respect to $g_0$:
 \be
 \mathcal{U}=\{(p,q):\, q \notin CutLocus_{g_0}(p)\} \subset M \times M.
 \ee
   They choose 
 \be
 v=v_{p,q}\in T_pM \textrm{ such that } \exp_p(v_{p,q}) = q
 \ee
 and $\gamma_p(t)=\exp_p(tv)$ is $g_0$-length minimizing from $p$ to $q$: $L_0(\gamma_p)=d_0(p,q)$.
The goal is to define a parametrized tube around the geodesic from $p$ to $q$. In order to accomplish this they define
\begin{align}
    w \in N_{v,\alpha,p}=\{w: w \in S_k\subset T_pM , |w|_{g_0} < \alpha\}
\end{align}
where $S_k$ is a sphere of radius $\frac{1}{k}$ (or a hyperplane if $k=0$) through the origin in $T_pM$ which is carefully chosen to avoid
focal points in the foliation constructed below and $\alpha>0$ is chosen small enough so that
they can extend $v$ uniquely to $T_{p'}M$ for every $p'=\exp_p(w)$, $w \in N_{v,\alpha,p}$ by choosing a $v \in T_{p'}M$ so that $v \perp \exp_p(N_{v,\alpha,p})$, of the same length as $v \in T_pM$, and $v$ is a continuous vector field on $ \exp_p(N_{v,\alpha,p})$. Then the foliation is defined: 
 \begin{align}
    \mathcal{T}_{v, \alpha,p} = \{\gamma_{p'}(t):p' = \exp_p(w), w \in N_{v,\alpha,p}, 0 \le t \le 1\},
\end{align}
created using a foliation by length minimizing $g_0$-geodesics
\begin{align}
    \gamma_{p'}(t) = \exp_{p'}(tv), \quad 0 \le t \le 1 \textrm{ running from } p' \textrm{ to } q'
\end{align}
where $p' = \exp_p(w)$.

  Keep in mind that
\be\label{71}
L_j(\gamma_{p'})\ge d_j(p',q') \ge d_0(p',q')=L_0(\gamma_{p'})
\ee
 These tubes of $g_0$-geodesics are depicted in Figure~\ref{fig-APS-tubes} so that one sees 
how large $L_j(\gamma_{p'})$ when the geodesic reaches into a tip.

By $Vol_j(M) \rightarrow Vol_0(M)$ and \eqref{VolumeSetsConverge} one has
convergence of the volumes of the tubes
\begin{align}\label{FoliationVolume}
    Vol_j(\mathcal{T}_{v,\alpha,p})\rightarrow Vol_0(\mathcal{T}_{v,\alpha,p}).
\end{align}  
They next work to show that if the volumes of the tubes are converging then for almost every
 $\gamma_{p'}$, 
 \be \label{Ljplan}
 L_j(\gamma_{p'}) \to L_0(\gamma_{p'}) \textrm{ and thus by (\ref{71}) one has }
 d_j(p',q') \to d_0(p',q'). 
\ee
To do this rigorously they must be careful to keep track of the variation between the geodesics.
Taking
\begin{align}
 d\exp^{\perp}: N\, \exp_p(S_k) \rightarrow M   
\end{align} 
to be the differential of the normal exponential map where $ N\exp_p(S_k)$ is the normal bundle to $ S_k\subset M$, $|d \exp^{\perp}|_{g_0}$ is the determinant of the map in directions orthogonal to the foliation, $d \mu_{N_{v,\alpha,p}}=d\mu_N$ be the usual measure for $N_{v,\alpha,p} \subset T_pM \approx\R^m$, $\lambda_1^2,...,\lambda_m^2$ the eigenvalues of $g_j$ with respect to $g_0$ where $\lambda_1 \le ... \le \lambda_m$, and $\sqrt{h}$ the square root of the determinant of the metric $h$ for the hypersurface $\exp(N_{v,\alpha,p})$ in normal coordinates on $N_{v,\alpha,p}$ then 
\begin{align}
  \vol_j(\mathcal{T}_{v,\alpha,p})&= \int_{\mathcal{T}_{v, \alpha}}  \sqrt{Det_{g_0}(g_j)}\,dV_{g_0} 
   \\&= \int_{N_{v,\alpha,p}}\int_{\gamma_{p'}}\lambda_1...\lambda_m |d \exp_{p'}^{\perp}|_{g_0}\sqrt{h}dt_{g_0}\,d\mu_{N}
   \\& \ge  \int_{N_{v,\alpha,p}}\int_{\gamma_{p'}}\lambda_1...\lambda_{m-1} |d \exp^{\perp}|_{g_0}\sqrt{h}dt_{g_j}\,d\mu_{N}
    \\&\ge  \int_{N_{v,\alpha,p}}\int_{\gamma_{p'}} |d \exp^{\perp}|_{g_0}\sqrt{h}dt_{g_j}\,d\mu_{N}\label{CrucialVolumeToDistanceEq1} 
   \\&\ge  \int_{N_{v,\alpha,p}}\int_{\gamma_{p'}} |d \exp^{\perp}|_{g_0}\sqrt{h}dt_{g_0}\,d\mu_{N}\label{CrucialVolumeToDistanceEq2} 
   \\&=  \vol_0(\mathcal{T}_{v,\alpha,p}).
\end{align}
By the convergence of the volumes of the tubes in (\ref{FoliationVolume}), one concludes that
 \eqref{CrucialVolumeToDistanceEq1} and \eqref{CrucialVolumeToDistanceEq2} converge to one another:
\be
\int_{N_{v,\alpha,p}} \int_{\gamma_{p'}} |d \exp^{\perp}|_{g_0}\sqrt{h}(dt_{g_j}-dt_{g_0})\,d\mu_{N} \to 0.
 \ee
Next they show that 
\begin{align}
    |d \exp^{\perp}|_{g_0} \ge A_{p,q} >0 \quad \text{on } \mathcal{T}_{v,\alpha,p}, \label{NonDegnerateFoliation}
\end{align}
using a careful discussion to avoid $g_0$-focal points.  Note that the constants $A_{p,q}$ might be
quite small if $p$ and $q$ are almost conjugate to one another. Also, $\sqrt{h} > h_0 > 0$ on $N_{v,\alpha,p}$ since in normal coordinates they notice that $\sqrt{h}=1$ at $p$.

They then obtain (\ref{Ljplan}) rigorously as follows
\begin{align}
    \int_{N_{v,\alpha,p}}&\int_{\gamma_{p'}} |d \exp^{\perp}|_{g_0}\sqrt{h}(dt_{g_j}-dt_{g_0})\,d\mu_{N} 
    \\&\ge A_{p,q} h_0  \int_{N_{v,\alpha,p}} L_j(\gamma_{p'}) - L_0(\gamma_{p'})\,d \mu_N
    \\&\ge A_{p,q} h_0 \int_{N_{v,\alpha,p}} d_j(p',q') - d_0(p',q') \,d \mu_N
    \\&= A_{p,q} h_0 \int_{N_{v,\alpha,p}} |d_j(p',q') - d_0(p',q')| \,d \mu_N, \label{FirstIntegralDistancesToZero}
\end{align}
and hence \eqref{FirstIntegralDistancesToZero} converges to $0$ as $j \rightarrow \infty$.  In particular
for almost every $p' \in \exp_p(N_{v,\alpha,p})$ and $q'$ determined by $p'$ they have $d_j(p',q') \to d_0(p',q')$.  However one needs to show pointwise almost everywhere convergence where $(p',q')\in M\times M$ with $q'$ independent of $p'$
and $p'$ running freely almost everywhere in $M$.

In order to obtain a $M\times M$ open set around $(p,q)$ they need to free themselves from the restrictions to submanifolds depending on $N_{v,\alpha,p}$
and the dependence of $q'$ on $p'$, so  they construct a $2m$ dimensional set of deformations of
$N_{v,\alpha,p}$ as follows:
\be
\mathcal{N}_{p,q}= \{(v,\tau, \eta, w): \,v\in V_\varepsilon, \,\tau\in (-\bar{\tau},\bar{\tau}), \,\eta\in (\bar{\eta}_1, \bar{\eta}_2), 
\, w\in N_{\eta v,\alpha,p_\tau} \}
\ee
with 
\begin{eqnarray*}
v\,&\in &  V_\varepsilon= \{v'\in T_pM:\, |v'|_{g_0}=|v_{p,q}|_{g_0},\, g_0(v',v_{p,q}) >  (1-\varepsilon) |v_{p,q}|_{g_0}\}\\
&& \qquad \qquad \textrm{where }\varepsilon > 0 \textrm{ sufficiently close to 0 depending on $p,q$}\\
\eta\, &\in& (\bar{\eta}_1, \bar{\eta}_2) \textrm{ sufficiently close to 1 depending on $p,q$}\\
\tau\, &\in& (-\bar{\tau},\bar{\tau}) \textrm{ sufficiently close to 0 depending on $p,q$}\\
p_\tau &=& p_{\tau,v}= \exp_p(\tau v) \\
p'_{\tau}&=& p'_{v, \tau, \eta,w}= \exp_{p_\tau}(w) \textrm{ where } w\in N_{v,\alpha,p_\tau}\\
q'_\eta&=&q'_{v, \tau, \eta,w}= \exp_{p'_\tau}(\eta v') \textrm{ after parallel transporting $v$  to $v'=v'_{v, \tau, \eta,w}$}.
\end{eqnarray*}
They define
\be
\Psi_{p,q}: \mathcal{N}_{p,q} \to U_{p,q} \subset M\times M 
\textrm{ to be  }
 \Psi_{p,q}=(p'_{v, \tau, \eta,w}, q'_{v, \tau, \eta,w}) 
\ee
and prove it is bijective onto an open neighborhood $U_{p,q}$ of $(p,q) \in M\times M$ using the fact that
$q$ is not a cut point of $p$.

They repeat the integration as above replacing $N_{v_{p,q},\alpha,p}$ with $N_{v,\alpha,p_\tau}$
where $v\in V_\varepsilon$, $\tau\in (-\bar{\tau},\bar{\tau})$, and $\eta\in (\bar{\eta}_1, \bar{\eta}_2)$.    
In fact the integrals are not only converging to $0$ but also uniformly
bounded above:
\begin{align}
    \int_{N_{ v,\alpha,p}}&\int_{\gamma_{p'_\tau}} |d \exp^{\perp}|_{g_0}\sqrt{h}(dt_{g_j}-dt_{g_0})\,d\mu_{N} 
    \\&\le \vol_j(T_{\eta v,\alpha,p}) - \vol_0(T_{\eta v,\alpha,p})) \le \vol_j(M)\le V_0.
    \end{align}
    Thus they apply the Dominated Convergence Theorem to see that
    \be
    \int_{v\in V_\varepsilon} \int_{-\bar{\tau}}^{\bar{\tau}}
    \int_{\bar{\eta}_1}^{\bar{\eta}_2}
    \int_{N_{v,\alpha,p}} \int_{\gamma_{p'_\tau}} |d \exp^{\perp}|_{g_0}\sqrt{h}(dt_{g_j}-dt_{g_0})\,d\mu_{N} \, d\eta d\tau dV\to 0.
    \ee
    This implies as in (\ref{FirstIntegralDistancesToZero}) that one has
\be
\int_{v\in V_\varepsilon} \int_{-\bar{\tau}}^{\bar{\tau}}
    \int_{\bar{\eta}_1}^{\bar{\eta}_2}
    \int_{N_{v,\alpha,p}}  |d_j(p'_\tau,q'_\eta) - d_0(p'_\tau,q'_\eta)| \,d\mu_{N} \, d\eta d\tau dV\to 0.
    \ee
    Applying the map $\Psi_{p,q}: \mathcal{N}_{p,q} \to U_{p,q}$ we have
    \be  
  \int_{U_{p,q}} |d_j(p',q') - d_0(p',q')| \,d\mu \,d\mu  \to 0.
  \ee
  So there is a subsequence converging pointwise almost everywhere on $U_{p,q}$.
  
  To complete the proof, they observe that $\mathcal{U}=\{(p,q):\, q \notin CutLocus_{g_0}(p)\}$ is a set of full measure
  in $M\times M$  and has a compact exhaustion:
  \be
  K_1\subset K_2 \subset \cdots \subset K_k\subset  \cdots \subset \, \mathcal{U}
  \textrm{ and } \bigcup_{k =1}^\infty K_k \,=\, \mathcal{U}.
  \ee
Since the open cover of each compact set
\be
K_k \, \subset  \, \mathcal{U}\, \subset \bigcup_{(p,q)\in \mathcal{U}} U_{p,q}
\ee
has a finite subcover, we obtain a countable cover of $\mathcal{U}$
\be
\mathcal{U}\subset \bigcup_{i=1}^\infty U_{p_i,q_i}
\textrm{ where each } K_k \subset \bigcup_{i=1}^{N_k} U_{p_i,q_i}.
\ee
They now take a subsequence of $d_j: M \times M \to [0,D]$ which converges pointwise
almost everywhere on $U_{p_1,q_1}$, then a further subsequence which
converges pointwise almost everywhere on $U_{p_2,q_2}$, and so on and diagonalize,
to obtain a subsequence that converges pointwise almost everywhere on all
$U_{p_i,q_i}$ and thus on $\mathcal{U}$ which has full measure in $M\times M$.
\end{proof}
 

\section{A New Explicit Estimate on the Intrinsic Flat Distance}\label{sect:NewSWIFEstimate}

In this section we prove a new explicit estimate on the intrinsic flat distance between
metric spaces where $d_j\ge d_0$ everywhere and $d_j \le d_0+\lambda$ on a set $W$
where $\vol_j(M_j\setminus W_j)$ is small.   This explicit estimate will be applied to prove our
main theorem.

\begin{thm}\label{est-SWIF}
Let $M$ be an oriented, connected and closed manifold, $M_j=(M,g_j)$ and $M_0=(M,g_0)$ be Riemannian manifolds with  $\diam(M_j) \le D$,  $\vol_j(M_j)\le V$
and $F_j: M_j \rightarrow M_0$ a $C^1$ diffeomorphism  and distance non-increasing map:
\be
d_j(x,y) \ge d_0(F_j(x), F_j(y)) \quad \forall x,y \in M_j.
\ee
Let $W_j \subset M_j$, $W_j \not = \emptyset$ be a measurable set and  $\delta_j, V_j, h_j > 0$ so that
 \be\label{eq-distCond}
d_j(x,y) \le d_0(F_j(x), F_j(y)) +2 \delta_j \qquad \forall x,y \in W_j
\ee
with
\be\label{eq-volCond}
\vol_j(M_j \setminus W_j) \le V_j
\ee
and
\be
h_j = \sqrt{2 \delta_j D + \delta_j^2}
\ee
then
\be
d_{\mathcal{F}}(M_0,M_j) \le 2V_j + h_j V.
\ee
\end{thm}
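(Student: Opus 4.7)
The plan is to build an explicit common metric space $Z$ containing distance-preserving images of $M_j$ and $M_0$, and then to exhibit integral currents $A$ and $B$ in $Z$ with $A + \partial B = \varphi_{j\#}[[M_j]] - \varphi_{0\#}[[M_0]]$ whose combined mass realises the stated bound. Take $Z$ to be the quotient of $M_j \sqcup (M \times [0, h_j]) \sqcup M_0$ in which the cylinder carries the product Riemannian metric $g_j + dt^2$, every $(x, 0)$ is identified with $\varphi_j(x) \in M_j$, and $(x, h_j)$ is identified with $\varphi_0(F_j(x)) \in M_0$ \emph{only} when $x \in W_j$. Endow $Z$ with the induced length metric; over the bad set $M_j \setminus W_j$ the top of the cylinder is left free, dangling inside $Z$ as an $m$-slice isometric to $(M \setminus W_j, g_j)$.

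The key technical step is verifying that the inclusions $\varphi_j$ and $\varphi_0$ are distance-preserving. Any candidate shortcut from $\varphi_j(x)$ to $\varphi_j(y)$ in $Z$ that uses $M_0$ must enter the cylinder, reach the top at some $(p_1, h_j)$ with $p_1 \in W_j$, traverse $M_0$ to some $(p_2, h_j)$ with $p_2 \in W_j$, and return through the cylinder to $M_j$; its $Z$-length is therefore at least
\[
\sqrt{d_j(x, p_1)^2 + h_j^2} \;+\; d_0(F_j(p_1), F_j(p_2)) \;+\; \sqrt{d_j(p_2, y)^2 + h_j^2}.
\]
The hypothesis $h_j \ge \sqrt{2 \delta_j D + \delta_j^2}$ is precisely the statement that the Pythagorean inequality $\sqrt{a^2 + h_j^2} \ge a + \delta_j$ holds for every $a \le D$. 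Applying it to both vertical legs and invoking the closeness hypothesis $d_j(p_1, p_2) \le d_0(F_j(p_1), F_j(p_2)) + 2 \delta_j$ on $W_j$, the shortcut length is at least $d_j(x, p_1) + d_j(p_1, p_2) + d_j(p_2, y) \ge d_j(x, y)$ by the $d_j$-triangle inequality. A symmetric computation handles $\varphi_0$.

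Let $\Phi: M \times [0, h_j] \to Z$ denote the cylinder inclusion and take $B$ to be the pushforward current $\pm \Phi_\#[[M \times [0, h_j]]]$, sign chosen so that, with $\tau := \Phi_\#[[(M \setminus W_j) \times \{h_j\}]]$ denoting the free-top current,
\[
\partial B \;=\; \varphi_{j\#}[[M_j]] \;-\; \varphi_{0\#}[[F_j(W_j)]] \;-\; \tau.
\]
Because $\Phi$ is $1$-Lipschitz, $\mass(B) \le \vol(M \times [0, h_j], g_j + dt^2) = h_j \vol_j(M_j) \le h_j V$, and similarly $\mass(\tau) \le \vol_j(M \setminus W_j) \le V_j$. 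Setting $A := \tau - \varphi_{0\#}[[F_j(M_j \setminus W_j)]]$ then makes $A + \partial B = \varphi_{j\#}[[M_j]] - \varphi_{0\#}[[M_0]]$, using that $F_j$ is a bijection with $F_j(W_j) \sqcup F_j(M_j \setminus W_j) = M_0$. The two pieces of $A$ have disjoint supports in $Z$, and $F_j$ is distance (hence volume) non-increasing, so $\mass(A) \le V_j + V_j = 2V_j$. Combining yields $d_{\mathcal F}(M_0, M_j) \le \mass(A) + \mass(B) \le 2 V_j + h_j V$.

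The main obstacle is the distance-preservation verification: the up-across-down cylindrical shortcut through $M_0$ genuinely threatens the $\varphi_j$-isometry, and the specific form of the hypothesis $h_j \ge \sqrt{2 \delta_j D + \delta_j^2}$ is engineered precisely so that the Pythagorean gain of $\delta_j$ per vertical leg exactly absorbs the $2 \delta_j$ slack permitted by the closeness hypothesis on $W_j$.
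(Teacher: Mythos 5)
Your proposal is correct and follows essentially the same strategy as the paper: a glued space $Z$ containing the cylinder $M\times[0,h_j]$ with the product metric $g_j+dt^2$, distance preservation verified via the Pythagorean inequality $\sqrt{a^2+h_j^2}\ge a+\delta_j$ for $a\le D$ absorbing the $2\delta_j$ slack on $W_j$, and the decomposition $A+\partial B$ with $B$ the cylinder current and $A$ the two defect slices of mass at most $2V_j$. The only difference is cosmetic: the paper glues the full bottom of the cylinder to $M_0$ and the top over $W_j$ to $M_j$ (so the excess of $M_j$ dangles), whereas you glue the full bottom to $M_j$ and the top over $W_j$ to $M_0$ (so the excess of $M_0$ dangles, its mass controlled by the volume non-increasing property of $F_j$); both arrangements yield the identical estimate.
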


\begin{figure}[h] 
   \center{\includegraphics[width=.4\textwidth]{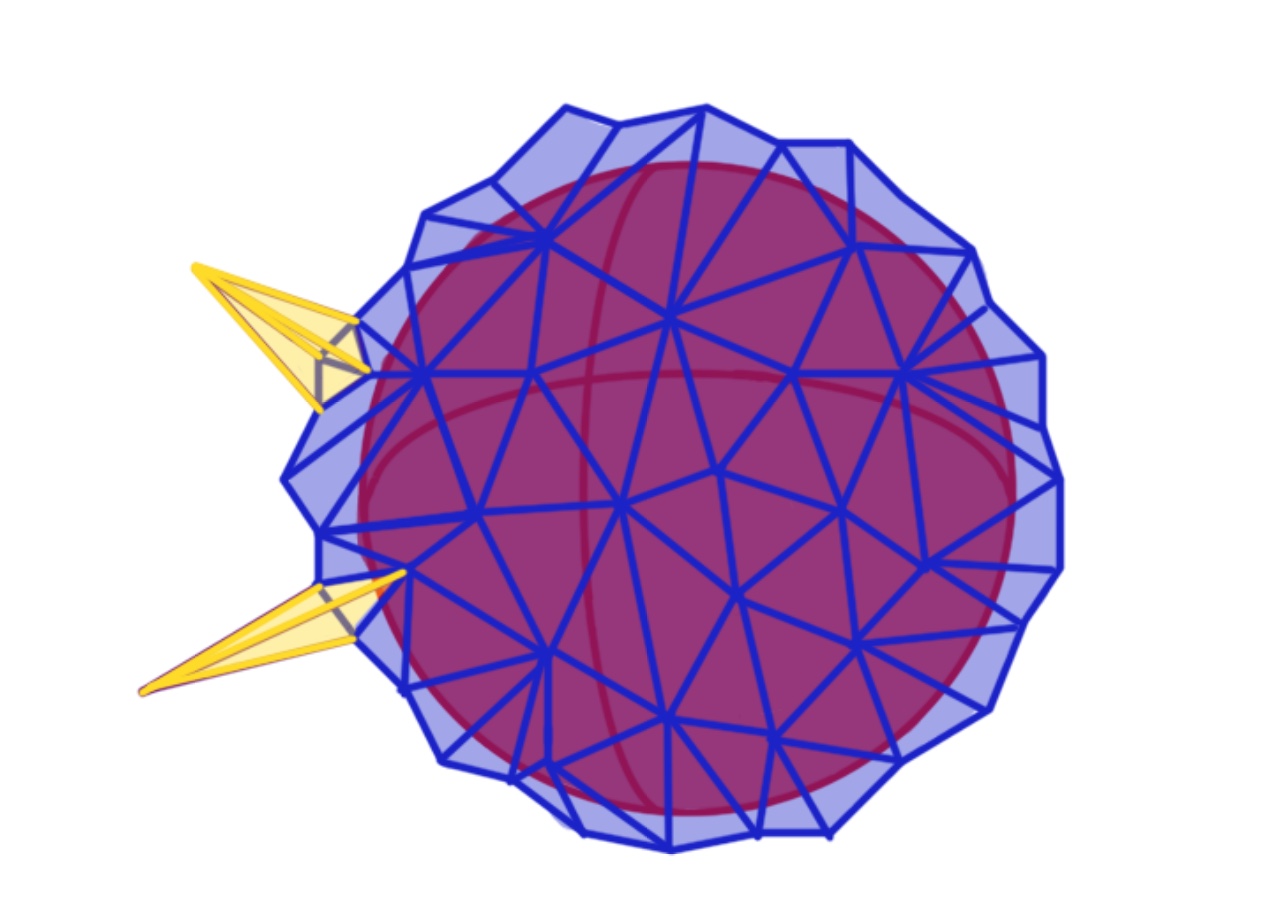}}
\caption{Here we see a piecewise flat $(M,d_j)$ around $(M, d_0)=({\mathbb S}^2, d_{\mathbb S})$ clearly identifying the regions where $d_j$ is not close to $d_0$ in yellow.}
   \label{fig-AS2-A}
\end{figure}

\begin{rmrk} 
Observe that the hypotheses of this theorem are much weaker than the hypotheses of 
the theorem of the third author with Huang and Lee in the Appendix of \cite{HLS} which requires controlling the
distances in biLipschitz way everywhere.  We may also contrast this theorem with an earlier theorem of the third author with Lakzian (see Theorem 4.6 in \cite{Lakzian-Sormani-1}).   The theorem with Lakzian does not require the distance decreasing map we require here, but does require that one obtain uniform bounds on the metric tensor in the good region.  It requires a two-sided distance estimate in place of (\ref{eq-distCond}).  In addition to
a volume estimate similar to (\ref{eq-volCond}), it requires uniform control on the areas of $\partial W_j$.  
All three of these theorems are proven by constructing an explicit common metric space $Z$ into which the oriented manifolds embed via distance preserving maps.  However the metric spaces are quite different for each theorem and thus provide different estimates requiring different bounds. 
\end{rmrk}


\subsection{Constructing the Common Space $Z$}

Now we construct a complete metric space $Z$ for which two Riemannian manifolds can be embedded in a distance preserving manner.

\begin{figure}[h] 
   \center{\includegraphics[width=.8\textwidth]{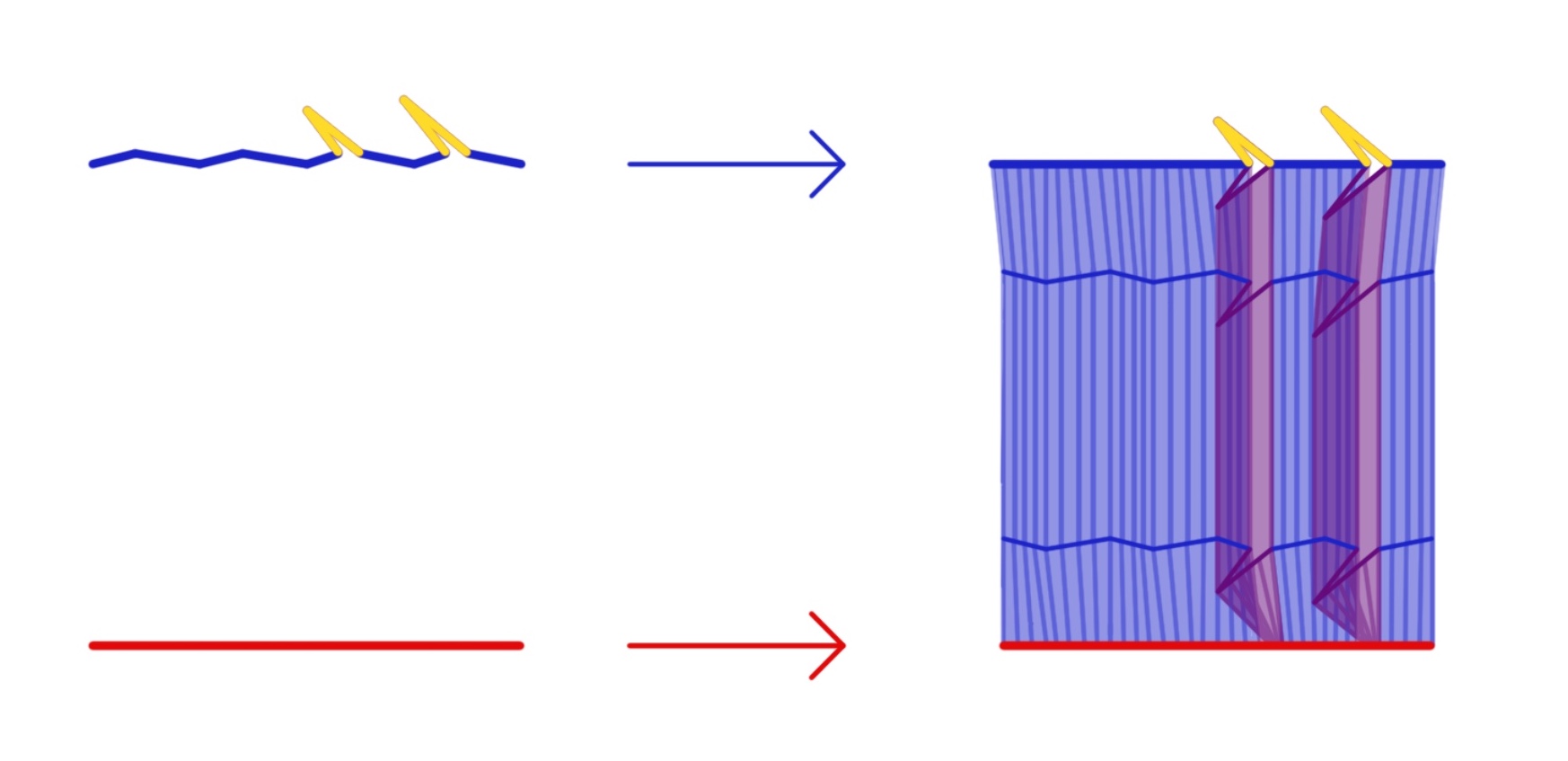}}
\caption{Here we see $(M,d_j)$ and $(M, d_0)$ on the left and $Z$ on the right, using the same coloring as
in Figure~\ref{fig-AS2-A}. The blue part of $(M,d_j)$ represents the good set $W_j$.}
   \label{fig-AS2-B}
\end{figure}

\begin{lem} \label{cnstr0-Z}
Let $M$ be a connected, closed manifold, $M_j=(M,g_j)$ and $M_0=(M,g_0)$ be Riemannian manifolds with  $\diam(M_j) \le D$, and $F_j: M_j \rightarrow M_0$ be a $C^1$ diffeomorphism and distance non-increasing map.   Let $W_j \subset M_j$, $W_j \not = \emptyset$ and define the space $Z$ as in Figure~\ref{fig-AS2-B} to be
\be
Z=  M_0  \disjointunion \left(   M \times [0,h_j] \right) \disjointunion  M_j  \,\,|_\sim
\ee
where we identify points via the bijection
\be
\bar{F}_j: \, M \times \{0\} \subset M\times [0,h_j] \to M_0 \textrm{ where } \bar{F}_j(x,0)=F_j(x),
\ee
and identify points via the bijection
\be
id: \,\overline{W}_j \subset M_j  \to \overline{W}_j \times \{h_j\} \subset M\times [0,h_j] \textrm{ where } id(x)=(x,h_j).
\ee
Then $Z$ is a metric space with distance, $d_Z: Z \times Z \to [0, \infty)$, given by
\be
d_Z(z_1, z_2) = \inf \{L_Z(C):\, C(0)=z_1,\, C(1)=z_2\}
\ee
where $C$ is any piecewise smooth curve whose length, $L_Z$, is determined using $g_j$ in $M_j$,
$g_0$ in $M_0$ and the isometric product $g_j + dh^2$ in $M \times (0,h_j]$. 

In addition, for all points $(x_1,h),(x_2,h') \in M\times[0,h_j] \subset Z$ we have:
\be
d_Z((x_1,h),(x_2,h'))\ge \sqrt{d_0(F_j(x_1),F_j(x_2))^2 +|h-h'|^2} ,
\ee
\be \label{region-dist-dec-to-Z}
d_Z((x_1,h),(x_2,h')) \le \sqrt{d_j(x_1,x_2)^2 +|h-h'|^2}.
\ee 
 \end{lem}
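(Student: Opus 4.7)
My plan is to construct a $1$-Lipschitz projection $\pi \colon Z \to M_0 \times [0, h_j]$, with the target endowed with its product Riemannian distance coming from $g_0 + dh^2$. This single projection will supply both the lower bound on $d_Z$ and the positive definiteness needed to upgrade the obvious pseudometric into a genuine metric; the upper bound will instead be extracted from an explicit competing curve that stays entirely inside the cylinder.

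First I would define $\pi$ piecewise by $\pi(p) = (p, 0)$ for $p \in M_0$, $\pi(x, h) = (F_j(x), h)$ on the cylinder, and $\pi(y) = (F_j(y), h_j)$ for $y \in M_j$. The two built-in identifications (at height $0$ via $\bar{F}_j$ and at height $h_j$ over $\overline{W}_j$) are manifestly compatible with these rules, so $\pi$ descends to the quotient $Z$. To see that $\pi$ is $1$-Lipschitz I would verify it piecewise: on $M_0$ it is an isometry onto $M_0 \times \{0\}$; on $M_j$ it is $F_j$ followed by inclusion into $M_0 \times \{h_j\}$, which is distance non-increasing by hypothesis; and on the cylinder it is $F_j \times \mathrm{id}$, which shrinks lengths because Lemma~\ref{DistToMetric} upgrades the distance non-increasing condition on $F_j$ to the tangential inequality $F_j^* g_0 \le g_j$, whence the pullback of $g_0 + dh^2$ is dominated by $g_j + dh^2$. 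Concatenating across the three pieces yields $L_Z(C) \ge L(\pi \circ C)$ for every admissible curve, so
\[ d_Z(z_1, z_2) \ge d_{M_0 \times [0, h_j]}(\pi(z_1), \pi(z_2)). \]
Specialising to cylinder points and using the standard fact that the distance in a Riemannian product is the Pythagorean combination of the factor distances yields the first stated inequality.

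For the upper bound I would take a $g_j$-length-minimising curve $\gamma \colon [0, 1] \to M$ from $x_1$ to $x_2$ parametrised proportionally to $g_j$-arc length and consider the competitor $C(t) = (\gamma(t), (1 - t) h + t h')$, which stays in the cylinder and has length exactly $\sqrt{d_j(x_1, x_2)^2 + (h - h')^2}$ in the metric $g_j + dh^2$. Finally, to promote the infimum $d_Z$ to a genuine metric, I would note that symmetry and the triangle inequality are automatic from the definition as an infimum over reversible and concatenable paths, and that the only failure of injectivity of $\pi$ occurs on pairs $(y, (y, h_j))$ with $y \in M_j \setminus \overline{W}_j$; any curve in $Z$ joining these two distinct points must first reach the gluing locus $\overline{W}_j \subset M_j$, contributing at least $d_j(y, \overline{W}_j) > 0$ because $y$ lies in the open complement of the closed set $\overline{W}_j$. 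The step I expect to be most technical is the Lipschitz verification for $\pi$ on the cylinder piece, because that is the only place where the tangential form of the distance non-increasing hypothesis on $F_j$ is genuinely used.
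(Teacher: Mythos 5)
Your proof is correct, and it packages the argument differently from the paper's. For the lower bound the paper first shows that any curve between two cylinder points may be assumed to lie entirely in $M\times[0,h_j]$ (excursions into $M_j\setminus\overline{W}_j$ are folded onto $M\times\{h_j\}$ without changing length) and then bounds the length integrand below by $\sqrt{g_0(dF_j(x'),dF_j(x'))+h'^2}$; you instead build a single global $1$-Lipschitz projection $\pi:Z\to (M_0\times[0,h_j],\,g_0+dh^2)$, which absorbs that curve-folding reduction (arbitrary curves are handled uniformly) and simultaneously gives you positive definiteness of $d_Z$, a point the paper delegates entirely to Burago--Burago--Ivanov. Both routes rest on the same key fact, $F_j^*g_0\le g_j$ from Lemma~\ref{DistToMetric}, and your upper-bound competitor is the same cylinder curve the paper implicitly uses. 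What your version buys is a cleaner, case-free lower bound and an explicit metric-space verification; what the paper's version buys is the folding reduction itself, which it reuses verbatim in the proof of Lemma~\ref{cnstr-Z} to show that $\varphi_j$ is distance preserving, so you would need to re-derive (or extract from your projection argument) that reduction when you get there. Two minor points: your competitor presupposes a $g_j$-minimizing geodesic from $x_1$ to $x_2$, which exists since $M$ is closed, though an almost-minimizing curve would avoid the issue entirely; and your positive-definiteness argument tacitly assumes $\overline{W}_j\neq\emptyset$ so that $Z$ is connected, which is harmless in the intended application where $W_j$ has almost full measure.
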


\begin{rmrk}
Note that the way in which we measure the lengths of curves in $M\times(0,h_j] \subset Z$ is via the isometric product $g_j+dh^2$ but we are not claiming that the metric space has a product structure on $M\times(0,h_j]$. In general one does not expect the metric space $(Z,d_Z)$ to have a product structure because it will be advantageous to take advantage of shortcuts through $M_0$ which is identified with $M\times \{0\}$.
\end{rmrk}

\begin{proof}
Observe that the metric space $Z$ constructed in the statement of this lemma is a well defined  length space (see the discussion  of length spaces given in Section 2.1 of Burago, Burago, Ivanov \cite{BBI}). In particular, the set of piecewise smooth curves is a class of admissible paths and we can measure lengths by lengths of admissible paths by using $g_j$ in $M_j$,
$g_0$ in $M_0$ and the isometric product $g_j + dh^2$ in $M \times (0,h_j]$.  Then by Exercise 2.1.2 of \cite{BBI} the distance function $d_Z: Z \times Z \to [0, \infty)$ defined by
\be
d_Z(z_1, z_2) = \inf \{L_Z(C):\, C(0)=z_1,\, C(1)=z_2\}
\ee
 turns $(Z,d_Z)$ into a metric space. Now we would like to show the claimed estimates on $d_Z$.
 
 Given any $(x_1,h_1),(x_2,h_2) \in Z' = M \times [0,h_j] \subset Z$,with the metric on $Z'$ restricted from $Z$, let 
\be
C:[0,1]\to Z \textrm{ such that }C(0)=(x_1,h_1)
\textrm{ and } C(1)=(x_2,h_2).
\ee
We claim that we can take
\be \label{identified-segments}
C([0,1]) \subset Z'=M \times [0,h_j].
\ee

On the contrary, if $C$ were not contained in $Z'$ then let $S \subset [0,1]$ be the maximal subset so that for all $s \in S$, $C(s) \not \in Z'$. Note that by the fact that $C(0),C(1) \in Z'$ we know $S \subset (0,1)$. If we define the map 
\begin{align}
id:M_j \rightarrow M \times \{h_j\}\subset Z, \qquad id(x) = (x,h_j),
\end{align} 
then we can define a new curve 
\begin{align}
\tilde{C}(t)=
\begin{cases}
id(C(t))& \text{ if } t \in S
\\C(t)& \text{ otherwise.}
\end{cases}
\end{align}
By construction $\tilde{C} \subset Z'$ and since we measure the lengths of curves the same way in $M_j$ and in $M\times \{h_j\}$ we find that $L_Z(C([0,1]))=L_Z(\tilde{C}([0,1]))$.

Now assume that $C([0,1]) \subset Z'$ so we can write:
 $C(t)=(x(t), h(t))$ where $x(t)\in M$ and $h(t)\in [0,h_j]$. Then by Lemma \ref{DistToMetric} we know that $g_0 (dF_j(v), dF_j(v))  \leq g_j (v,v)$ and hence 
 \begin{align}\label{LengthLowerBound}
 L_Z(C([0,1])) \ge \int_0^1 \sqrt{g_0(dF_j(x'), dF_j(x'))+ h'^2}dt.
 \end{align}
 Since \eqref{LengthLowerBound} holds for all $C$ and the right hand side is how lengths would be measured in the Riemannian product $g_0 + dh^2$ we can take the infimum over all curves to conclude that
 \begin{align}
 d_Z((x_1,h_1),(x_2,h_2)) \ge \sqrt{d_0(F_j(x_1),F_j(x_2))^2 +|h_1-h_2|^2},
 \end{align}
 for all $(x_1,h_1),(x_2,h_2) \in Z'$.
 
 Again using the fact that $g_0 (dF_j(v), dF_j(v))  \leq g_j (v,v)$ we can observe
 \begin{align}\label{LengthUpperBound}
 L_Z(C([0,1])) \le \int_0^1 \sqrt{g_j(x', x')+ h'^2}dt.
 \end{align}

 Since \eqref{LengthUpperBound} holds for all $C$ and the right hand side is how lengths would be measured in the Riemannian product $g_j + dh^2$ we can take the infimum over all curves  to conclude that
 \begin{align}
 d_Z((x_1,h_1),(x_2,h_2)) \le \sqrt{d_j(x_1,x_2)^2 +|h_1-h_2|^2}.
 \end{align}
\end{proof}

Now we use the metric space, $Z$, constructed in Lemma \ref{cnstr0-Z} to show that $M_j$ and $M_0$ can be embedded in $Z$ in a distance preserving manner.   See Figures~\ref{fig-AS2-A} and~\ref{fig-AS2-B}.

\begin{lem} \label{cnstr-Z}
Let $M$ be a connected, closed manifold, $M_j=(M,g_j)$ and $M_0=(M,g_0)$ be Riemannian manifolds with  $\diam(M_j) \le D$, and $F_j: M_j \rightarrow M_0$ be a $C^1$ diffeomorphism and distance non-increasing map.
Let $W_j \subset M_j$, $W_j \not = \emptyset$, be a measurable set.  Assume that
 \be\label{eq-distCond0}
d_j(x,y) \le d_0(F_j(x), F_j(y)) +2 \delta_j \qquad \forall x,y \in W_j
\ee
and take
\be
h_j = \sqrt {2 \delta_j D + \delta_j^2} 
\ee
then the maps
\be
\varphi_j: M_j \to Z \textrm{ where } \varphi_j(x) = x  \, \textrm{if} \, x \notin \overline{W}_j \, \textrm{otherwise} \, \varphi_j(x) = (x, h_j)
\ee
and
\be
\varphi_0: M_0 \to Z \textrm{ where } \varphi_0(x) = (F_j^{-1}(x), 0)
\ee
are distance preserving maps.
\end{lem}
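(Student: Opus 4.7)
The plan is to verify the two inequalities $d_Z(\varphi_j(x),\varphi_j(y)) \le d_j(x,y)$ and $d_Z(\varphi_j(x),\varphi_j(y)) \ge d_j(x,y)$, and similarly for $\varphi_0$. The map $\varphi_0$ is the easy one: the upper bound holds because any $d_0$-minimizing curve in $M_0$ from $x$ to $y$ is itself a valid curve in $Z$ via the identification of $M_0$ with $M\times\{0\}$, and the lower bound follows directly from the inequality $d_Z((x_1,h),(x_2,h')) \ge \sqrt{d_0(F_j(x_1),F_j(x_2))^2 + (h-h')^2}$ already established in Lemma~\ref{cnstr0-Z}, applied at $h=h'=0$.

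The upper bound for $\varphi_j$ is likewise direct: a $d_j$-minimizing curve in $M_j$ from $x$ to $y$ is a valid curve in $Z$ connecting $\varphi_j(x)$ and $\varphi_j(y)$, using the identifications at the endpoints when $x$ or $y$ lies in $\overline{W}_j$. The real work is the lower bound. My strategy is to take an arbitrary piecewise smooth curve $C:[0,1]\to Z$ from $\varphi_j(x)$ to $\varphi_j(y)$ and establish $L_Z(C) \ge d_j(x,y)$ by a case analysis based on whether $C$ enters $M_0=M\times\{0\}$.

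If $C$ never touches $M_0$, I define an auxiliary curve $(p,h):[0,1]\to M\times[0,h_j]$ by setting $(p,h)=C$ on cylinder portions and $(p,h)=(C,h_j)$ on the portions lying in $M_j$ (these agree at transitions through the $\overline{W}_j \leftrightarrow \overline{W}_j\times\{h_j\}$ identification); the construction of $L_Z$ then makes the length of $(p,h)$ in the isometric product $g_j+dh^2$ equal to $L_Z(C)$, so $L_Z(C)$ is at least the product distance from $(x,h_j)$ to $(y,h_j)$, which equals $d_j(x,y)$. If instead $C$ does touch $M_0$, let $t_1=\min\{t:C(t)\in M_0\}$ and $t_2=\max\{t:C(t)\in M_0\}$, writing $C(t_i)=(x_i,0)$. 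On $[0,t_1]$ and $[t_2,1]$ the curve avoids $M_0$, so the previous argument yields $L_{[0,t_1]}\ge\sqrt{d_j(x,x_1)^2+h_j^2}$ and $L_{[t_2,1]}\ge\sqrt{d_j(x_2,y)^2+h_j^2}$. For the middle piece I use the natural projection $\Phi:Z\to M_0$ sending $z\in M_j$ to $F_j(z)$, $(x,h)$ in the cylinder to $F_j(x)$, and acting as the identity on $M_0$; this is well-defined under the identifications and 1-Lipschitz because $F_j$ is distance non-increasing and $g_j\ge F_j^*g_0$, hence $L_{[t_1,t_2]}\ge d_0(F_j(x_1),F_j(x_2))$. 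Combining these via the elementary inequality $\sqrt{a^2+h_j^2}\ge a+\delta_j$ for $a\le D$ (which is precisely the content of $h_j\ge\sqrt{2\delta_j D+\delta_j^2}$), the triangle inequality in $M_0$, and the distance-non-increase of $F_j$, I arrive at $L_Z(C)\ge d_0(F_j(x),F_j(y))+2\delta_j$; the hypothesis~\eqref{eq-distCond0} (extended to $\overline{W}_j$ by continuity) then gives $L_Z(C)\ge d_j(x,y)$ whenever $x,y\in\overline{W}_j$. The remaining cases, where $x$ or $y$ lies in $M_j\setminus\overline{W}_j$, reduce to the above by splitting $C$ at its transitions through $\overline{W}_j$ between $M_j$ and the cylinder and applying the triangle inequality in $(M_j,d_j)$.

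The main obstacle is the $M_0$-shortcut: because $M\times\{0\}$ is glued to $M_0$ through the distance non-increasing $F_j$, a curve can descend through the cylinder, traverse $M_0$ using the smaller metric $d_0$, and ascend back up, potentially producing a path strictly shorter than $d_j(x,y)$. The two hypotheses are precisely calibrated to defeat this: the inequality $d_j\le d_0+2\delta_j$ on $W_j$ says shortcuts via $M_0$ save at most $2\delta_j$ horizontally, while $h_j\ge\sqrt{2\delta_j D+\delta_j^2}$ forces each vertical excursion in the cylinder to cost at least $\delta_j$ more than its horizontal component, so the two excursions together pay back the full $2\delta_j$ saved in $M_0$.
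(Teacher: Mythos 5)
Your proof is correct and follows essentially the same decomposition as the paper's: project an arbitrary competitor curve into $M\times[0,h_j]$ via the level-$h_j$ identification, split it at the first and last passage through $M_0$, bound each vertical excursion via $\sqrt{a^2+h_j^2}\ge a+\delta_j$ for $a\le D$, and handle endpoints in $M_j\setminus\overline{W}_j$ by a further splitting at the first and last visits to $\overline{W}_j$. The one point where you genuinely diverge is the middle arc: you compose with a globally defined $1$-Lipschitz projection $\Phi:Z\to M_0$, whereas the paper applies the inequality $d_Z((x_1,0),(x_2,0))\ge d_0(F_j(x_1),F_j(x_2))$ from Lemma~\ref{cnstr0-Z} at $h=h'=0$. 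Both routes give $L_{[t_1,t_2]}\ge d_0(F_j(x_1),F_j(x_2))$; your map $\Phi$ is well defined under both gluings (it sends $z\in M_j$, $(z,h)$, and $F_j(z)\in M_0$ all to $F_j(z)$) and is length non-increasing on each stratum precisely because $g_j\ge F_j^*g_0$, so the step is sound. The $\Phi$ formulation is arguably cleaner — it makes the $1$-Lipschitz structure of $Z\to M_0$ explicit rather than rederiving the distance bound — but the two arguments are interchangeable here.
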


\begin{proof}  
 First we show that $\varphi_0: M_0 \to Z$ is distance preserving. 
Given any $p,q \in M_0$ where $\varphi_0(p)=(x_p,0), \varphi_0(q)=(x_q,0)$ we can use the estimate of Lemma \ref{cnstr0-Z} to notice
\begin{align}\label{d0distIneq}
d_Z(\varphi_0(p),\varphi_0(q)) \ge d_0(p,q).
\end{align}
Since we can choose a curve $C \subset M \times \{0\}$ whose length achieves the equality in \eqref{d0distIneq} we see that
\begin{align}
d_Z(\varphi_0(p),\varphi_0(q)) = d_0(p,q),
\end{align}
and hence $\varphi_0$ is distance preserving.

\bigskip
Now we show that $\varphi_j: M_j \to Z$ is distance preserving. 
Consider $p,q\in M_j$.  Let
\be
C:[0,1]\to Z \textrm{ such that }C(0)=\varphi_j(p)
\textrm{ and } C(1)=\varphi_j(q).
\ee

In the case where $p,q \in \overline{W}_j$ we know by the proof of Lemma \ref{cnstr0-Z} that we can take
\be
C([0,1]) \subset Z'=M \times [0,h_j],
\ee
with the metric on $Z'$ restricted from $Z$.

Thus we have $C(t)=(x(t), h(t) )$ where
\be \label{isom-prod-parts-2}
x(0)= p \quad h(0)=h_j \quad x(1)=q \quad h(1)=h_j.
\ee
If all of $C([0,1])$ lies above
$h=0$ we have  
\begin{eqnarray}
L_Z(C([0,1]))& = &\int_0^1 \sqrt{ g_j(x'(t),x'(t))+ h'(t)^2 \,} \, dt \\
&\geq& \int_0^1 \sqrt{ g_j(x'(t),x'(t))\,} \, dt =L_{g_j}(x([0,1])).
\end{eqnarray}  

However if $C$ does
reach $h=0$ then we only have
\be\label{eq-trianIn}
L_Z(C([0,1]))\ge d_Z(\varphi_j(p), (x_p,0))+ d_{0}(F_j(x_p), F_j(x_q)) + d_Z((x_q,0), \varphi_j(q))
\ee
where $(x_p,0)$ and $(x_q,0)$ are the first and last points where $C$ hits $h=0$.

By our choice of $h_j$ and  for any $0 < d \le D$ we have,
\be
d^2+h_j^2 \ge  d^2 + 2 \delta_j D + \delta_j^2 \ge  d^2 + 2\delta_j d +\delta_j^2= (d+\delta_j)^2.
\ee
Since $\diam(M_0) \leq \diam(M_j) \leq D$ and using the estimates from Lemma \ref{cnstr0-Z} we find
\begin{align}
d_Z(\varphi_j(p), (x_p,0))  &\ge  \sqrt{d_{0}(F_j(p),F_j(x_p))^2 + h_j^2}\\ 
&\ge d_{0}(F_j(p),F_j(x_p))+ \delta_j
\\ d_Z(\varphi_j(q), (x_q,0)) &\ge \sqrt{d_{0}(F_j(q),F_j(x_q))^2 + h_j^2} 
\\&\ge d_{0}(F_j(q),F_j(x_q))  + \delta_j.
\end{align}
   
Now recall that $F_j$ is distance non-increasing and satisfies
(\ref{eq-distCond0}) where (\ref{eq-distCond0}) also holds for points $p,q$ in the $d_j$ closure of $W_j$ by continuity.
Substituting these observations in (\ref{eq-trianIn}) we find
\begin{align}
L_Z(C[0,1])
&\ge  d_{0}(F_j(p),F_j(x_p)) + d_{0}(F_j(x_p), F_j(x_q)) 
\\&\qquad + d_{0}(F_j(q),F_j(x_q)) + 2\delta_j
\\&\ge  d_{0}(F_j(p),F_j(q)) + 2\delta_j 
\\&\ge d_{j}(p,q).
\end{align}
Since we can choose a $C \subset Z'$ which realizes the distance $d_j(p,q)$ we see that $\varphi_j$ is distance preserving for $p,q\in \overline{W}_j$.

If $p$ or $q$ lies in $M_j \setminus \overline{W}_j$, then any curve $C:[0,1]\to Z$ from
$C(0)=\varphi_j(p)$ to $C(1)=\varphi_j(q)$ starts and ends at a point which is not
identified with a point in $Z'$. If no points in $C$ are identified with a point in $Z'$
then 
\be
L_Z(C[0,1])=L_{g_j}(C[0,1]) \ge d_{j}(p,q).
\ee
Otherwise let $p'$ be the first point on $C$ identified with a point in $Z'$ and
$q'$ be the last such point.  Then $p', q' \in \overline{W}_j$, and so we know from above that
\be
d_Z((p',h_j),(q',h_j))=d_{j}(p',q').
\ee
Applying the fact that unidentified points are measured using $d_{j}$
we have
\be
L(C[0,1])\ge d_{j}(p,p')+ d_{j}(p',q')+d_{j}(q',q)\ge d_{j}(p,q).
\ee
Since we can choose a $C \subset M_j$ which realizes the distance $d_j(p,q)$ we see that $\varphi_j$ is distance preserving for $p,q\in M_j \setminus\overline{W}_j$. Hence $\varphi_j:M_j \to Z$ is distance preserving, as desired.
\end{proof}


\subsection{Estimating Intrinsic Flat Distance}

We now use the metric space $Z$ constructed in the previous subsection in order to give a new estimate on the intrinsic flat distance between Riemannian manifolds.  Readers may wish to review Subsection~\ref{subsect:SWIFBackground} before reading this proof.

\begin{proof}[Proof of Theorem \ref{est-SWIF}]
In order to estimate the intrinsic flat distance between $M_j$ and $M_0$ we must be very careful with orientation.
Remember $M_j=(M,g_j)$ and $M_0=(M, g_0)$ where $M$ is the same compact oriented manifold and $F_j: M_j\to M_0$
is biLipschitz.  So there is
an oriented atlas of smooth charts 
\be
\phi_i:  U_i  \subset \R^m  \to \phi_i(U_i)\subset M_j \textrm{ and } F_j\circ \phi_i: U_i  \subset \R^m  \to F_j(\phi_i(U_i))\subset M_0.
\ee  
Note that these charts are diffeomorphisms so they are biLipschitz with different constants for both
$M_j=(M,g_j)$ and $M_0=(M,g_0)$ and they can be restricted to $A_j \subset U_j$
to ensure they are pairwise disjoint as required when considering them as rectifiable charts for $M_j$ and $M_0$.
Furthermore
\begin{align}\label{eq-canonicalT}
[[M_j]] (f,\pi)= & \sum_{i=1}^\infty  \int_{ A_i}  (f \circ \phi_i )  \, d(\pi_1\circ \phi_i)\wedge\cdots \wedge d(\pi_m\circ \phi_i) 
\end{align}
for any $f:  M_j  \to \R$  Lipschitz and bounded and $\pi=(\pi_1,...,\pi_m)$ where each component is Lipschitz
and
\begin{equation}
[[M_0]]=  {F_j}_\sharp[[M_j]].   
\end{equation} 

Let $\iota: [0,h_j]  \to [0,h_j]$ be the identity map. Then, $(\phi_i, \iota) :  A_i\times [0,h_j]  \to M_j  \times [0,h_j]$ defines an oriented atlas of biLipschitz maps. Then we can write $[[ \,  M_j \times [0,h_j]  \, ]]$ as a countable sum of integrals as above using this atlas. 

Now consider the identity map $\iota_j: M_j \times [0,h_j]   \to   Z'_j$. Since it  is $1$-Lipschitz by (\ref{region-dist-dec-to-Z}) and bijective,  the maps 
$$\iota_j \circ    (\phi_i, \iota) :   A_i\times [0,h_j]      \to      M_j  \times [0,h_j]     \to  Z'$$    
define an oriented atlas of Lipschitz maps  for $Z'_j$,   where the maps can be considered to be biLipschitz as before. 
Thus,  we can define the current with weight $1$ given by this oriented atlas, $B$. Moreover, 
\begin{equation}
B= {\iota_j}_\sharp [[  \,M_j \times [0,h_j] \,]]. 
\end{equation}
Recall that the boundary operator commutes with the pushforward operator, thus
\begin{equation}
\partial B= {\iota_j}_\sharp \partial[[ \, M_j \times [0,h_j] \,]]= {\iota_j}_\sharp    \alpha_\sharp [[ M_j  \times \{h_j \}  ]]  -   {\iota_j}_\sharp \beta_\sharp   [[M_j \times \{0\}]],     
\end{equation}
where $\alpha: M_j \times \{h_j\}   \to  M_j \times [ 0, h_j]$  and $\beta: M_j \times \{0\}   \to  M_j \times [ 0, h_j]  $ are inclusion maps and are trivially Lipschitz maps. 

By the definition of $\varphi_0$, 
\begin{eqnarray}
\varphi_{0\#}[[ M_0]]  &  =  &     {\iota_j}_\sharp \beta_\sharp   [[M_j \times \{0\}]]. 
\end{eqnarray}

Since $W_j \subset M_j$ is a measurable set, we can define an integer rectifiable current of weight $1$, $[[W_j  \times \{h_j \}  ]]$,  by restricting the atlas of $M_j \times \{h_j\}$.   In a similar way,  $[[M_j \setminus W_j]]$ is a well define integer rectifiable current. 

By the definition of $\varphi_j$, and
\begin{eqnarray}
\varphi_{j\#}[[ M_j]]  &  =  &   {\iota_j}_\sharp    \alpha_\sharp [[W_j  \times \{h_j \}  ]]  +     \tilde \alpha_\sharp [[M_j \setminus W_j]].  
\end{eqnarray}

We define now an integer rectifiable current in the following way, 
\begin{equation}
A  =   \tilde \alpha_\sharp [[M_j \setminus W_j]]  -     {\iota_j}_\sharp    \alpha_\sharp [[   (M_j \setminus W_j  ) \times \{h_j \} ]],
\end{equation}
where $\tilde \alpha: M_j \setminus W_j  \to Z$ is the inclusion map, which is Lipschitz since it is distance preserving. 
Note that the second term in $A$ corresponds to the current of weight $1$ on the set of unidentified points in $Z$ drawn in yellow
in Figure~\ref{fig-AS2-B}. 
Furthermore, $A$ is an integral current given that 
\begin{equation}
\partial  A  =      \partial  \tilde \alpha_\sharp [[M_j \setminus W_j]]  -  \partial   {\iota_j}_\sharp    \alpha_\sharp [[   (M_j \setminus W_j  ) \times \{h_j \} ]]=0.
\end{equation}

From the previous equalities, 
\begin{equation}
 A = \varphi_{j\#}[[M_j]]   -  {\iota_j}_\sharp    \alpha_\sharp [[ M_j \times \{h_j \} ]].
 \end{equation}
We conclude that 
\be
\partial B + A = \varphi_{j\#}[[M_j]]- \varphi_{0\#}[[ M_0]]
\ee
and thus 
\be
d_{\mathcal{F}}(M_j, M_0) \le \mass(B) + \mass(A).
\ee

To finish the proof, since $B=  {\iota_j}_\sharp [[ \,  M_j \times [0,h_j]   \,  ]]$ we know that 
\be
\mass(B) \leq  \mass([[\, M_j \times [0,h_j] \,]])  = \vol_j(M_j \times [0,h_j]) \leq h_jV, 
\ee
where we used the fact that the map $\iota_j:  M_j \times [0,h_j]  \to Z'$ is a 1-Lipschitz map.  
In a similar way, 
\be
\mass(A)  \leq 2 \vol_j(M_j \setminus W_j).
\ee
\end{proof}


\section{Pointwise Convergence and Volume Bounds imply Intrinsic Flat Convergence}\label{sect:GoodSet}

In this section we prove the following theorem which we will
later apply  to prove our main theorem.

\begin{thm} \label{ptwise-to-SWIF}
Suppose we have a fixed closed and oriented Riemannian manifold, $M_0=(M,g_0)$,
and  a sequence of metric tensors $g_j$ on $M$ defining $M_j=(M, g_j)$ such that
\be 
g_0(v,v) \le g_j(v,v), \quad \forall v \in T_pM,
\ee
\be
\diam_j(M_j) \le D_0,
\ee
\be\label{ptwise-to-bulk-1}
d_j(p,q)\to d_0(p,q) \textrm{ pointwise a.e. } p,q \in (M\times M, d_0\times d_0)
\ee
and 
\be 
\vol_j(M_j) \rightarrow \vol_0(M_0).
\ee
 Then
\be
d_{\mathcal{F}}(M_j, M_0 ) \to 0.
\ee
\end{thm}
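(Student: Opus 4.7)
The plan is to apply Theorem~\ref{est-SWIF} with $F_j : M_j \to M_0$ taken to be the identity on the underlying manifold $M$. Since $g_j \ge g_0$ gives $d_j \ge d_0$, this $F_j$ is distance non-increasing, so it suffices to produce a measurable $W_j \subset M$ and a sequence $\delta_j \to 0$ with
\begin{equation*}
d_j(x,y) \le d_0(x,y) + 2\delta_j \quad \text{for all } x, y \in W_j,
\end{equation*}
together with $\vol_j(M_j \setminus W_j) \to 0$. Theorem~\ref{est-SWIF} will then yield $d_{\mathcal{F}}(M_j, M_0) \le 2\vol_j(M_j \setminus W_j) + h_j\, \vol_j(M_j) \to 0$ with $h_j = \sqrt{2\delta_j D_0 + \delta_j^2}$.

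First I would apply Egoroff's theorem on the product measure space $(M \times M, \mu_0 \times \mu_0)$ with $\mu_0 = \vol_{g_0}$ to upgrade the hypothesized a.e.\ pointwise convergence $d_j \to d_0$ to uniform convergence on measurable sets $K_j \subset M \times M$: choosing $\delta_j, \eta_j \to 0$ appropriately, I obtain $(\mu_0 \times \mu_0)((M \times M) \setminus K_j) < \eta_j$ and $|d_j(x,y) - d_0(x,y)| \le \delta_j$ for every $(x,y) \in K_j$.

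The main obstacle is that such a $K_j$ need not contain any product set $W_j \times W_j$ of large measure---a thin strip near the diagonal has negligible product measure yet intersects every reasonable $W \times W$. To bypass this I would use the triangle inequality through an intermediate point. For $x, y$ in a prospective $W_j$ I look for $z \in M$ with both $(x, z), (z, y) \in K_j$ and $z$ lying in a $\rho_j$-tube $T_{\rho_j}(x, y)$ around the $g_0$-minimizing geodesic from $x$ to $y$; when such $z$ exists,
\begin{equation*}
d_j(x, y) \le d_j(x, z) + d_j(z, y) \le d_0(x, z) + d_0(z, y) + 2\delta_j \le d_0(x, y) + 2\rho_j + 2\delta_j.
\end{equation*}
To force existence of $z$, define the slice $B_j(x) = \{y \in M : (x, y) \notin K_j\}$ and set
\begin{equation*}
W_j = \{x \in M : \mu_0(B_j(x)) < \alpha_j\},
\end{equation*}
with $\alpha_j \to 0$ slow enough that $\mu_0(T_{\rho_j}(x, y)) > 2\alpha_j$ for the pairs under consideration. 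Fubini and Markov's inequality give $\mu_0(M \setminus W_j) \le \eta_j/\alpha_j \to 0$, and for $x, y \in W_j$ the tube $T_{\rho_j}(x, y)$ cannot be covered by $B_j(x) \cup B_j(y)$, so a suitable $z$ exists.

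The hardest remaining technicalities concern pairs close to the $g_0$-cut locus (where tubes degenerate) and pairs with very small $d_0(x, y)$ (where tubes are intrinsically too thin). For the former I would restrict to a compact exhaustion of the complement of the cut locus as in the proof of Theorem~\ref{PointwiseConvergenceAE} and absorb the residue into $M \setminus W_j$. For the latter I would chain $N_j$ intermediate points along the geodesic with $N_j \to \infty$ slowly, paying an additional cost $2 N_j \delta_j$ that still vanishes. Finally the volume bound follows from $g_j \ge g_0$ and the hypothesis $\vol_j(M_j) \to \vol_0(M_0)$: indeed $\vol_j(M_j \setminus W_j) \le \vol_j(M_j) - \vol_0(W_j) \to \vol_0(M) - \vol_0(M) = 0$ since $\mu_0(M \setminus W_j) \to 0$. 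Theorem~\ref{est-SWIF} then delivers $d_{\mathcal{F}}(M_j, M_0) \to 0$.
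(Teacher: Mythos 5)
Your overall skeleton matches the paper's: Egoroff on $M\times M$ to upgrade to uniform convergence on a large set $K_j$ (the paper's $S_\varepsilon$); a Fubini/Markov slice argument to define a good subset $W_j\subset M$ as the set of points whose ``slice'' is large; a triangle inequality through an intermediate point $z$ lying in both slices; the observation $\vol_j(M\setminus W_j)\le (\vol_j(M)-\vol_0(M)) + \vol_0(M\setminus W_j)$; and then feed the resulting $\delta_j$ and $W_j$ into Theorem~\ref{est-SWIF}. The single point where you diverge from the paper is the mechanism that forces the existence of $z$, and there you have a genuine gap.

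You look for $z$ in a thin tube $T_{\rho_j}(x,y)$ around the $g_0$-minimizing geodesic. As you yourself note, tubes degenerate as $(x,y)$ approaches the $g_0$-cut locus, and they become intrinsically too thin when $d_0(x,y)$ is small, so you propose to ``restrict to a compact exhaustion of the complement of the cut locus \dots and absorb the residue into $M\setminus W_j$.'' This does not go through as stated: the pathology is a condition on \emph{pairs} $(x,y)\in M\times M$, whereas your good set $W_j$ is a subset of $M$, and one cannot remove a 2-parameter family of bad pairs by shrinking a 1-parameter set. For a fixed $x\in W_j$ the set of $y$ for which the geodesic tube degenerates can be removed from the slice, but then one must verify that enough $y\in W_j$ remain \emph{and} that the tube lower bound on $\mu_0(T_{\rho_j}(x,y))$ is uniform over the remaining pairs; this requires a quantitative, $x$-dependent lower bound on tube volume that you have not supplied. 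The chaining fix for small $d_0(x,y)$ has a similar issue: it needs $N_j\delta_j\to0$ while each sub-tube still satisfies $\mu_0(T)>2\alpha_j$, and the competing rates for $N_j$, $\rho_j$, $\alpha_j$, $\delta_j$, $\eta_j$ are not reconciled.

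The paper sidesteps all of this by replacing the geodesic tube with a metric ball $B(x,\lambda')$ around a $d_0$-\emph{midpoint} $x$ of $p_1,p_2$. A $d_0$-midpoint always exists in a compact length space, requires no mention of the cut locus, and by Bishop--Gromov volume comparison satisfies $\inf_{x\in M}\vol_0(B(x,\lambda'))\ge c(\lambda')>0$ for fixed $\lambda'>0$ (Lemma~\ref{ball-in-0}). Choosing $\varepsilon$ small enough that $2\kappa\varepsilon\vol_0(M)<c(\lambda')$ then guarantees the ball meets $S_{p_1,\varepsilon}\cap S_{p_2,\varepsilon}$ for \emph{every} pair $p_1,p_2\in W_{\kappa\varepsilon}$, with no cases to split on. The resulting bound $|d_j(p_1,p_2)-d_0(p_1,p_2)|<2\lambda'+2\delta_{\varepsilon,j}$ is exactly the input your plan needs for Theorem~\ref{est-SWIF}, but obtained uniformly. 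Replace your tube by the midpoint ball plus Bishop--Gromov and the rest of your argument closes cleanly.
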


Theorem \ref{ptwise-to-SWIF} is proven by ensuring that we can apply Theorem \ref{est-SWIF}.  In particular, we need to show the existence of subsets in $M$ satisfying
  \eqref{eq-distCond} and \eqref{eq-volCond}.  We now give an outline of this.  In Subsection \ref{subsect:Egoroff's Theorem} we use Egoroff's theorem to go from pointwise convergence almost everywhere to uniform converence on a set of almost full measure $S_\vare \subset M\times M$.  This set is not contained in $M$ and hence cannot be used to apply Theorem \ref{est-SWIF}.   As a preliminary step,  in Subsection \ref{subsect:Fubini} we 
use the coarea formula to see that for almost every $p \in M$,  the sets $S_{p,\vare} = \{ q \, |\,  (p, q)  \in S_\vare \} \subset M$ have almost full measure.  
By Egoroff's theorem we only know that  for all $q \in S_{p,\vare}$  we have $d_j(p,q) \to d_0(p,q)$. Thus, in Subsection \ref{subsect:Good Set} we define the good sets $W_{\kappa \varepsilon}$, that are used to apply  Theorem \ref{est-SWIF} as the set of points $p$  such that $S_{p,\vare}$ have almost full measure (quantified in terms of $\kappa$), and we show that these $W_{\kappa \varepsilon}$ also have almost full measure (quantified in terms of $\kappa$).  That is, they satisfy  \eqref{eq-volCond}.    In Subsection \ref{subsect:Distance Bounds} we ensure that the good sets  satisfy  \eqref{eq-distCond}.  Notice that given two points $p_1, p_2  \in W_{\kappa \varepsilon}$ we need to show that $d_j(p_1, p_2)$ close to $d_0(p_1, p_2)$ in a specific quantified way. Since $(p_1,p_2)$  might not be contained in $S_{p_1,\vare}$ we show the existence of a point $q \in S_{p_1,\vare} \cap S_{p_2,\vare}$ so that we can use a triangle inequality argument to 
obtain the uniform distance bound on pairs of points contained in the good set  $W_{\kappa \varepsilon}$. In Subsection \ref{subsect:Proof of SWIF} we finish the proof of Theorem \ref{ptwise-to-SWIF} by applying Theorem \ref{est-SWIF} in combination with all previous subsections.

\subsection{Egoroff's Theorem}\label{subsect:Egoroff's Theorem}
We begin by reminding the reader of Egoroff's theorem which can be found in the book of Evans and Gariepy \cite{Evans-Gariepy}.

\begin{thm}[Egoroff's Theorem] \label{Egoroff's Theorem}
Let $f_n:X \rightarrow \R$ be a sequence of measurable functions on a measure space $(X, \mu)$. Assume there is a measurable set $A \subset X$, $\mu(A) < \infty$, so that $f_n$ converges pointwise $\mu-$almost everywhere to a measurable function $f$. Then for every $\varepsilon > 0$, there exists a measurable subset $B_{\varepsilon} \subset A$ so that 
\begin{align}
\mu(B_{\varepsilon} ) < \varepsilon
\end{align}
and 
\begin{align}
f_n \rightarrow f
\end{align}
uniformly on $A \setminus B_{\varepsilon} $.
\end{thm}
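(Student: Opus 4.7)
The plan is a standard two-step argument: quantify the non-uniformity on measurable slabs, then use finite measure to shrink these slabs below a geometric-series tail.

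First, I would discard the $\mu$-null subset of $A$ on which $f_n \not\to f$, so that without loss of generality $f_n \to f$ everywhere on $A$. Then for each pair of positive integers $(n,k)$ I would define the ``bad'' sets
\begin{equation}
A_{n,k} \;=\; \bigcup_{m \geq n} \bigl\{\, x \in A \,:\, |f_m(x) - f(x)| \geq 1/k \,\bigr\}.
\end{equation}
For fixed $k$, the family $\{A_{n,k}\}_n$ is decreasing in $n$, and pointwise convergence on $A$ gives $\bigcap_{n} A_{n,k} = \emptyset$. Here the assumption $\mu(A) < \infty$ is essential: by continuity of measure from above, $\mu(A_{n,k}) \to 0$ as $n \to \infty$ for each $k$.

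The second step is a diagonal selection. For each $k$, choose $n_k$ so large that
\begin{equation}
\mu(A_{n_k, k}) \;<\; \varepsilon / 2^k,
\end{equation}
and set $B_\varepsilon = \bigcup_{k \geq 1} A_{n_k, k}$. Countable subadditivity gives $\mu(B_\varepsilon) < \varepsilon$. On the complement $A \setminus B_\varepsilon$, whenever $x \notin A_{n_k, k}$ we have $|f_m(x) - f(x)| < 1/k$ for every $m \geq n_k$. Since the threshold index $n_k$ depends only on $k$ and not on $x$, this is precisely uniform convergence $f_n \to f$ on $A \setminus B_\varepsilon$.

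There is no real obstacle beyond noticing where the finiteness of $\mu(A)$ is used: it is exactly the place where continuity from above is invoked to conclude $\mu(A_{n,k}) \to 0$. Without it (for instance on $\R$ with Lebesgue measure and $f_n = \mathbf{1}_{[n, n+1]}$), the conclusion fails, so the hypothesis is sharp. Everything else is bookkeeping with geometric-series style weights to trade off against $\varepsilon$.
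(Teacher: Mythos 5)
Your proof is correct and is the standard textbook argument for Egoroff's Theorem; note that the paper itself does not prove this statement but simply quotes it from Evans--Gariepy, and your argument (the decreasing bad sets $A_{n,k}$, continuity of measure from above using $\mu(A)<\infty$, and the $\varepsilon/2^k$ diagonal selection) is essentially the proof found in that reference. The only cosmetic point is that after discarding the null set $N$ on which convergence fails, you should formally absorb $N$ into $B_\varepsilon$ so that the uniform convergence holds on $A\setminus B_\varepsilon$ for the original $A$; since $\mu(N)=0$ this changes nothing.
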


Now we apply Egoroff's theorem to obtain uniform convergence on a set of almost full measure.

\begin{prop}\label{Svare}
Under the hypotheses of  Theorem \ref{ptwise-to-SWIF},  for every $\vare >0$ there exists a
$dvol_{g_0}\times dvol_{g_0}$ measurable set, $S_\vare\subset M\times M$, such that
\be\label{unifSvare}
\sup\{|d_j(p,q)- d_0(p,q)|\,:\, (p,q)\in S_\vare\}=\delta_{\vare,j} \to 0,
\ee
\be\label{volSvare}
\vol_{0\times 0} (S_\vare)> (1-\vare)\vol_{0\times 0}(M\times M).
\ee
and 
\be \label{Svaresym}
(p,q)\in S_\vare \iff (q,p) \in S_\vare.
\ee
\end{prop}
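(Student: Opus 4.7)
The plan is to apply Egoroff's theorem on the product space $M\times M$ equipped with the finite product measure $\vol_{0\times 0}=\vol_{g_0}\times\vol_{g_0}$, and then symmetrize the resulting set by intersecting it with its image under the involution swapping coordinates.

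First, I would verify measurability. Since $g_0$ and each $g_j$ are smooth Riemannian metrics on the compact manifold $M$, all the distance functions $d_0,d_j:M\times M\to[0,\infty)$ are continuous with respect to the common manifold topology on $M$, hence continuous on $(M\times M, d_0\times d_0)$ and Borel measurable. Hypothesis \eqref{ptwise-to-bulk-1} then supplies $d_j\to d_0$ pointwise almost everywhere on $M\times M$ with respect to the finite measure $\vol_{0\times 0}$.

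Applying Theorem~\ref{Egoroff's Theorem} with $A=M\times M$ and with $(\varepsilon/2)\vol_{0\times 0}(M\times M)$ in place of $\varepsilon$, I obtain a measurable subset $\tilde{S}_{\varepsilon}\subset M\times M$ on which $d_j\to d_0$ uniformly, with
\begin{equation}
\vol_{0\times 0}\!\left((M\times M)\setminus \tilde{S}_{\varepsilon}\right) < \tfrac{\varepsilon}{2}\,\vol_{0\times 0}(M\times M).
\end{equation}
To enforce symmetry, let $\sigma:M\times M\to M\times M$ denote the measure-preserving involution $\sigma(p,q)=(q,p)$, and define
\begin{equation}
S_{\varepsilon} := \tilde{S}_{\varepsilon}\cap \sigma(\tilde{S}_{\varepsilon}).
\end{equation}
By construction $\sigma(S_\varepsilon)=S_\varepsilon$, giving \eqref{Svaresym}. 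Since each $d_j$ (including $d_0$) is symmetric in its arguments, uniform convergence on $\tilde{S}_\varepsilon$ transfers to uniform convergence on $\sigma(\tilde{S}_\varepsilon)$, and therefore on their intersection $S_\varepsilon$, which yields \eqref{unifSvare} with $\delta_{\varepsilon,j}:=\sup_{(p,q)\in S_\varepsilon}|d_j(p,q)-d_0(p,q)|\to 0$.

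Finally, for the volume bound I would use subadditivity together with the fact that $\sigma$ preserves $\vol_{0\times 0}$:
\begin{align}
\vol_{0\times 0}((M\times M)\setminus S_\varepsilon)
&\le \vol_{0\times 0}((M\times M)\setminus \tilde{S}_\varepsilon) + \vol_{0\times 0}((M\times M)\setminus \sigma(\tilde{S}_\varepsilon))\\
&= 2\vol_{0\times 0}((M\times M)\setminus \tilde{S}_\varepsilon) < \varepsilon\,\vol_{0\times 0}(M\times M),
\end{align}
proving \eqref{volSvare}. The only mildly subtle step is the measurability check on the product space, which dissolves once one observes that all the $g_j$ induce the same underlying topology on the compact manifold $M$; the remainder is Egoroff plus a two-line symmetrization argument.
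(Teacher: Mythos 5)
Your proof is correct and follows essentially the same route as the paper: apply Egoroff's theorem on the finite measure space $(M\times M, \vol_{0\times 0})$ and then symmetrize. The only cosmetic difference is in the symmetrization step: the paper passes to the union $S_\varepsilon \cup \sigma(S_\varepsilon)$ (``enlarging'' the set, which preserves uniform convergence because $d_j$ and $d_0$ are symmetric, and can only improve the volume bound), whereas you intersect with $\sigma(\tilde S_\varepsilon)$ and compensate by running Egoroff with $\varepsilon/2$; both are equally valid.
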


\begin{proof}
By Egoroff's Theorem \ref{Egoroff's Theorem} since $(M,d_0,dvol_{g_0})$ is a metric measure space so that $dvol_{g_0}(M)< \infty$ and
\be
d_j(p,q)\to d_0(p,q) \textrm{    ptwise  $dvol_{g_0} \times dvol_{g_0}$ a.e. } (p,q) \in M\times M,
\ee
then for all $\vare>0$ there exists a $dvol_{g_0}\times dvol_{g_0}$ measurable set, $S_\vare\subset M\times M$, such that
\be\label{unifSvare}
\sup\{|d_j(p,q)- d_0(p,q)|\,:\, (p,q)\in S_\vare\}=\delta_{\vare,j} \to 0
\ee
and
\be\label{volSvare}
\vol_{0\times 0} (S_\vare)> (1-\vare)\vol_{0\times 0}(M\times M).
\ee
Note that since $d_j(p,q)=d_j(q,p)$ we can ensure, by possibly enlarging $S_\vare$, that
\be \label{Svaresym}
(p,q)\in S_\vare \iff (q,p) \in S_\vare.
\ee
\end{proof}


\subsection{Product Structures} \label{subsect:Fubini}   

We now use the product Riemannian structure on $(M\times M, g_0 \times g_0)$ in order to relate $S_{\varepsilon} \subset M \times M$ to subsets of $M$ through the control on the volume of $S_{\varepsilon}$.

\begin{lem}\label{lem-Spvare}
Under the assumptions of Proposition \ref{Svare},  for almost every $p \in M$ the sets
\be\label{Spvare}
S_{p,\vare}=\{q\in M\,:\, (p,q)\in S_\vare\},
\ee
are $dvol_{g_0}$ measurable and satisfy 
\be\label{AverageAreaOfGoodSetInequality}
(1-\vare) \vol_0(M) < \int_{p\in M} \frac{\vol_0(S_{p,\vare})}{ \vol_0(M)}\, dvol_{g_0}.
\ee
\end{lem}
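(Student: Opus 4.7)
The plan is to apply Tonelli's theorem to the indicator function $\mathbf{1}_{S_\vare}$ on the product measure space $(M\times M,\, dvol_{g_0}\times dvol_{g_0})$, which is $\sigma$-finite (in fact finite) since $M$ is compact. Measurability of the slice $S_{p,\vare}$ for almost every $p\in M$, together with measurability of the function
\be
p \longmapsto \vol_0(S_{p,\vare}) = \int_M \mathbf{1}_{S_\vare}(p,q)\,dvol_{g_0}(q),
\ee
is the standard slicing consequence of Tonelli's theorem applied to the $dvol_{g_0}\times dvol_{g_0}$ measurable set $S_\vare$ supplied by Proposition~\ref{Svare}.

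Next I would use Tonelli to evaluate the double integral:
\be
\int_{p\in M} \vol_0(S_{p,\vare})\,dvol_{g_0}(p) \;=\; \int_{M\times M} \mathbf{1}_{S_\vare}\,d(vol_{g_0}\times vol_{g_0}) \;=\; \vol_{0\times 0}(S_\vare).
\ee
The product Riemannian structure gives $\vol_{0\times 0}(M\times M) = \vol_0(M)^2$, so the volume estimate \eqref{volSvare} from Proposition~\ref{Svare} yields
\be
\int_{p\in M} \vol_0(S_{p,\vare})\,dvol_{g_0}(p) \;>\; (1-\vare)\,\vol_0(M)^2.
\ee

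Finally I would divide through by $\vol_0(M)$ (which is positive and finite since $M_0$ is compact) to obtain
\be
\int_{p\in M} \frac{\vol_0(S_{p,\vare})}{\vol_0(M)}\,dvol_{g_0}(p) \;>\; (1-\vare)\,\vol_0(M),
\ee
which is exactly \eqref{AverageAreaOfGoodSetInequality}. There is no real obstacle here: the content of the lemma is entirely Fubini--Tonelli combined with the already-established volume bound on $S_\vare$, and the only care needed is to note that the product Riemannian volume form coincides with the product of the volume forms so that $\vol_{0\times 0}(M\times M)=\vol_0(M)^2$.
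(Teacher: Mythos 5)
Your proposal is correct and follows essentially the same route as the paper: the paper also invokes the product Riemannian structure (implicitly Fubini--Tonelli) to identify $\vol_{0\times 0}(S_\vare)$ with $\int_M \vol_0(S_{p,\vare})\,dvol_{g_0}$ and then applies the volume bound \eqref{volSvare} together with $\vol_{0\times 0}(M\times M)=\vol_0(M)^2$. You simply make the Tonelli slicing argument for measurability more explicit, which is a fine and slightly more careful presentation of the same idea.
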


See Figure~\ref{fig-AS-star}.

\begin{figure}[h] 
   \center{\includegraphics[width=.8\textwidth]{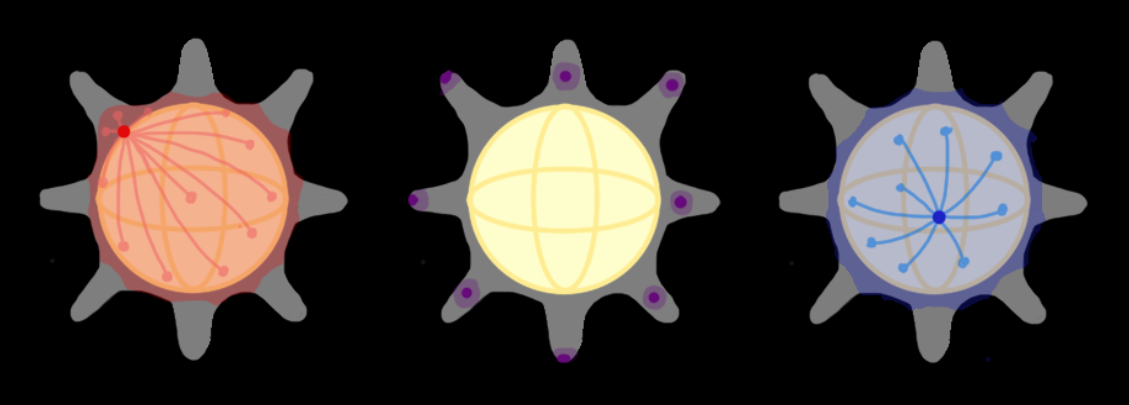}}
\caption{Here we see three copies of $(M,d_j)$ with volume close to $(M,d_0)=({\mathbb S}^2, d_{\mathbb{S}})$.   
On the left we see a point $p_1$ at the base of a well whose $S_{p_1,\vare}$ is most of the
manifold except the other wells.  On the right we see a point $p_2$ far from a well whose $S_{p_2,\vare}$ is most
of the manifold away from the wells.  In the middle we see points $p_i$ on wells whose $S_{p_i,\vare}$ is small.}
   \label{fig-AS-star}
\end{figure}

\begin{proof} 
Since $S_{\varepsilon}$ is $dvol_{g_0}\times dvol_{g_0}$ measurable it follows that for almost every $p$ such that 
$(p,q) \in  S_{\varepsilon}$ for some $q$, 
\begin{align}
S_{p,\vare}
\end{align}
is measurable.

Moreover, by (\ref{Svaresym}) we have
\be \label{Spvaresym}
q\in S_{p,\vare} \iff p \in S_{q,\vare}
\ee

Now by the product Riemannian structure on $(M \times M, g_0 \times g_0)$:
\be
\vol_{0\times 0} (S_\vare)=\int_{p\in M} \vol_0(S_{p,\vare}) \,dvol_{g_0}. 
\ee
Thus, by (\ref{volSvare}) and $\vol_{0\times0}(M\times M)=(\vol_0(M))^2$, we get
\be\label{AverageAreaOfGoodSetInequality1}
(1-\vare) \vol_0(M) < \int_{p\in M} \frac{\vol_0(S_{p,\vare})}{ \vol_0(M)}\, dvol_{g_0}.
\ee
\end{proof}


\subsection{Selecting our Good Set}\label{subsect:Good Set}

For $\vare >0$ and $\kappa >1$ such that $\kappa \vare < 1$ let
\be \label{Wkappavare}
W_{\kappa\vare}=\{p:\, \vol_0(S_{p,\vare}) > (1- \kappa\vare) \vol_0(M)\}.
\ee

First we notice that $W_{\kappa \vare}$ is measurable by defining the function  $\Phi:M \rightarrow [0,\infty)$ as $\Phi(p) = \int_M \mathbbm{1}_{S_{\vare}}(p,q) dvol_{g_0}(q)$ and so $W_{\kappa \vare}$ is measurable since it is the preimage of a measurable function.

In Figure~\ref{fig-AS-star} we can intuitively see that $W_{\kappa\vare}$
consists of points like $p_1$ and $p_2$ that do not lie inside the wells. In the following lemmas we show that $W_{\kappa \vare}$ has the correct volume to be used as the good set in Theorem \ref{est-SWIF}.

\begin{lem}\label{vol-W} For $W_{\kappa\vare}$ defined as in \eqref{Wkappavare} we find
\be
\vol_0(W_{\kappa\vare}) > \frac{\kappa-1}{\kappa} \vol_0(M).
\ee
\end{lem}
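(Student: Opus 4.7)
The plan is to prove this by a Markov-type (Chebyshev) inequality applied to the nonnegative function $p \mapsto \vol_0(M) - \vol_0(S_{p,\vare})$, using the integral bound already established in Lemma~\ref{lem-Spvare}.

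First I would rewrite the conclusion of Lemma~\ref{lem-Spvare}, namely
\be
(1-\vare) \vol_0(M) < \int_{p\in M} \frac{\vol_0(S_{p,\vare})}{ \vol_0(M)}\, dvol_{g_0},
\ee
as an integral bound on the ``defect'' $f(p) := \vol_0(M) - \vol_0(S_{p,\vare}) \ge 0$. Multiplying through by $\vol_0(M)$ and rearranging yields
\be
\int_{p \in M} f(p) \, dvol_{g_0} < \vare \, \vol_0(M)^2.
\ee

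Next I would observe that the complement of $W_{\kappa\vare}$ inside $M$ is precisely the superlevel set
\be
M \setminus W_{\kappa\vare} = \{p \in M :\, f(p) \ge \kappa\vare \, \vol_0(M)\}.
\ee
Applying Markov's inequality to $f$ on the measure space $(M, dvol_{g_0})$ then gives
\be
\vol_0(M \setminus W_{\kappa\vare}) \le \frac{1}{\kappa \vare \vol_0(M)} \int_M f \, dvol_{g_0} < \frac{\vare \vol_0(M)^2}{\kappa \vare \vol_0(M)} = \frac{\vol_0(M)}{\kappa}.
\ee
Subtracting from $\vol_0(M)$ yields the desired bound $\vol_0(W_{\kappa\vare}) > \frac{\kappa - 1}{\kappa} \vol_0(M)$.

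There is no real obstacle here; the main point is simply recognizing that the integral estimate from Lemma~\ref{lem-Spvare} is exactly what is needed to feed into Markov's inequality. The measurability of $W_{\kappa\vare}$ (needed to make sense of $\vol_0(W_{\kappa\vare})$) has already been noted just before the statement via the measurability of the function $p \mapsto \int_M \mathbbm{1}_{S_\vare}(p,q)\, dvol_{g_0}(q)$, so no additional work is required on that front.
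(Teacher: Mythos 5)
Your proof is correct and takes essentially the same approach as the paper's. The paper establishes the bound by splitting the integral from Lemma~\ref{lem-Spvare} over $W_{\kappa\vare}$ and $M\setminus W_{\kappa\vare}$, bounding the integrand by $1$ on the former and by $1-\kappa\vare$ on the latter, then rearranging; this inlined splitting is precisely the standard proof of the Markov/Chebyshev inequality you invoke for the defect $f(p)=\vol_0(M)-\vol_0(S_{p,\vare})$, whose superlevel set $\{f\ge\kappa\vare\,\vol_0(M)\}$ is exactly $M\setminus W_{\kappa\vare}$, so the two arguments are logically identical and differ only in packaging.
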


\begin{proof}
Starting with \eqref{AverageAreaOfGoodSetInequality} calculate
\begin{eqnarray*}
 (1-\vare) \vol_0(M) &<& \int_{p\in W_{\kappa\vare}} \frac{\vol_0(S_{p,\vare})}{ \vol_0(M)}\, dvol_{g_0}
+ \int_{p\in M\setminus W_{\kappa\vare}} \frac{\vol_0(S_{p,\vare})}{\vol_0(M)}\, dvol_{g_0}\\
&\le &\int_{p\in W_{\kappa\vare}} 1 \, dvol_{g_0}
+ \int_{p\in M\setminus W_{\kappa\vare}} (1- \kappa\vare)\, dvol_{g_0}\\
&=& \vol_0(W_{\kappa\vare}) +
(1- \kappa\vare) \vol_0(M\setminus W_{\kappa\vare})\\
&=& \vol_0(W_{\kappa\vare}) +
(1- \kappa\vare) \vol_0(M) -(1- \kappa\vare)\vol_0(W_{\kappa\vare})\\
&=&\kappa\vare \, \vol_0(W_{\kappa\vare})+ (1- \kappa\vare) \vol_0(M).
\end{eqnarray*}
Hence,  
\begin{equation}
(\kappa\vare-\vare) \vol_0(M) < \kappa\vare \, \vol_0(W_{\kappa\vare}). 
\end{equation}
This concludes the proof. 
\end{proof}

\begin{lem}\label{vol_j-W}
For $W_{\kappa\vare}$ defined as in \eqref{Wkappavare} we get 
\be
\vol_j(M \setminus W_{\kappa\vare}) \le \frac{1}{\kappa}\vol_0(M)+ |\vol_j(M)-\vol_0(M)|.
\ee
\end{lem}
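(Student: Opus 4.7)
The plan is to decompose $\vol_j(M\setminus W_{\kappa\vare})$ as a difference of volumes on all of $M$ versus on $W_{\kappa\vare}$, and then swap the $g_j$-volume on $W_{\kappa\vare}$ for the $g_0$-volume using the pointwise bound $g_j \ge g_0$, which makes Lemma \ref{vol-W} directly applicable.

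More concretely, I would write
\begin{equation*}
\vol_j(M\setminus W_{\kappa\vare}) \;=\; \vol_j(M) \;-\; \vol_j(W_{\kappa\vare}).
\end{equation*}
The hypothesis $g_0(v,v) \le g_j(v,v)$ for all $v \in TM$ implies (as already used in Theorem~\ref{PointwiseConvergenceAE} via the identity $\vol_j(U) = \int_U \sqrt{\operatorname{Det}_{g_0}(g_j)}\, dV_{g_0}$) that $\vol_j(U) \ge \vol_0(U)$ for every Borel set $U \subset M$. Applying this with $U = W_{\kappa\vare}$ yields $\vol_j(W_{\kappa\vare}) \ge \vol_0(W_{\kappa\vare})$, hence
\begin{equation*}
\vol_j(M\setminus W_{\kappa\vare}) \;\le\; \vol_j(M) - \vol_0(W_{\kappa\vare}).
\end{equation*}

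Next I would invoke Lemma~\ref{vol-W}, which gives $\vol_0(W_{\kappa\vare}) > \tfrac{\kappa-1}{\kappa}\vol_0(M) = \vol_0(M) - \tfrac{1}{\kappa}\vol_0(M)$. Substituting this into the previous display and adding and subtracting $\vol_0(M)$ produces
\begin{equation*}
\vol_j(M\setminus W_{\kappa\vare}) \;\le\; \bigl(\vol_j(M) - \vol_0(M)\bigr) + \tfrac{1}{\kappa}\vol_0(M).
\end{equation*}
Since $\vol_j(M) - \vol_0(M) \le |\vol_j(M) - \vol_0(M)|$, the desired inequality follows.

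There is no real obstacle here: the content of the lemma is just the combination of the $g_j \ge g_0$ volume monotonicity (which converts a lower bound on $\vol_0(W_{\kappa\vare})$ into a lower bound on $\vol_j(W_{\kappa\vare})$) with the bound from Lemma~\ref{vol-W}. The role of the term $|\vol_j(M)-\vol_0(M)|$ is to absorb the discrepancy introduced by this swap, and it will go to $0$ later by the volume convergence hypothesis of Theorem~\ref{ptwise-to-SWIF}, while $\tfrac{1}{\kappa}\vol_0(M)$ can be made small by choosing $\kappa$ large.
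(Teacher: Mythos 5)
Your proof is correct and follows essentially the same route as the paper's: both use the pointwise bound $g_j \ge g_0$ to get $\vol_j(U) \ge \vol_0(U)$, decompose $\vol_j(M\setminus W_{\kappa\vare}) = \vol_j(M) - \vol_j(W_{\kappa\vare})$, swap to $\vol_0(W_{\kappa\vare})$, and apply Lemma~\ref{vol-W}. Your write-up is if anything a little more direct than the paper's, which starts from a tautological inequality and rearranges, but the content is identical.
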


\begin{proof}

From $g_0 \le g_j$ we know that $d_0 \leq d_j$. Then, $\vol_0 \leq \vol_j$ and the following holds, 
\begin{equation*}
\vol_0(W_{\kappa \vare}) + \vol_j(M) \leq   \vol_j(W_{\kappa \vare}) + \vol_j(M) +  \left(\vol_0(M) -  \vol_0(M) \right).
\end{equation*}
Rearranging terms, 
\begin{equation*}
-   \vol_j(W_{\kappa \vare}) +  \vol_j(M) \leq  - \vol_0(W_{\kappa \vare}) + \vol_0(M) +  \left(\vol_j(M) -  \vol_0(M) \right).
\end{equation*}
Then by Lemma \ref{vol-W}, 
$$
 \vol_j(M \setminus W_{\kappa \vare}) <  - \frac{\kappa-1}{\kappa} \vol_0(M) + \vol_0(M) +  \left(\vol_j(M) -  \vol_0(M) \right).
$$

\end{proof}


\subsection{Uniform Distance Bounds}\label{subsect:Distance Bounds}

The aim of this subsection is to prove  Lemma \ref{unif-on-W} where we find a 
uniform distance bound on pairs of points contained in the good sets $W_{\kappa \varepsilon}$. 
More explicitly, given two points $p_1, p_2  \in W_{\kappa \varepsilon}$ we need to show that $d_j(p_1, p_2)$ is close to $d_0(p_1, p_2)$ in a specific quantified way,  see \eqref{eq-lemUnifW}.
Since in principle $(p_1,p_2)$  might not be contained in $S_{p_1,\vare}$, we show that $S_{p_1,\vare} \cap S_{p_2,\vare} \neq \emptyset$ and then use a triangle inequality argument to get  Lemma \ref{unif-on-W}.  In Figure~\ref{fig-AS-star} note that $S_{p_1,\vare} \cap S_{p_2,\vare}$  would consist of everything not in the wells and thus has a large volume and for points there their $d_j$ and $d_0$ distances are almost the same.

\begin{lem} \label{Sp12int}
Consider $W_{\kappa\vare}$ defined as in \eqref{Wkappavare} and $S_{p,\epsilon}$ defined in \eqref{Spvare}.
Let $p_1, p_2$ be two points in $W_{\kappa\vare}$. Then
$S_{p_1,\vare}$ and $S_{p_2,\vare}$ cannot be disjoint for $\kappa \vare < 1/2$.
In fact,
\be
\vol_0(S_{p_1,\vare} \cap S_{p_2,\vare}) > (1-2\kappa\vare) \vol_0(M).
\ee
\end{lem}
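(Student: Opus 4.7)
The proof is a direct application of the inclusion-exclusion bound combined with the definition of $W_{\kappa\vare}$. The plan is as follows: since both $p_1$ and $p_2$ lie in $W_{\kappa\vare}$, the definition \eqref{Wkappavare} gives immediately that
\be
\vol_0(S_{p_i,\vare}) > (1-\kappa\vare)\vol_0(M) \qquad \text{for } i=1,2.
\ee

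Next I would estimate the measure of the intersection from below using the elementary identity
\be
\vol_0(S_{p_1,\vare}\cap S_{p_2,\vare}) = \vol_0(S_{p_1,\vare}) + \vol_0(S_{p_2,\vare}) - \vol_0(S_{p_1,\vare}\cup S_{p_2,\vare}),
\ee
together with the trivial upper bound $\vol_0(S_{p_1,\vare}\cup S_{p_2,\vare}) \le \vol_0(M)$. Substituting the two lower bounds above then gives
\be
\vol_0(S_{p_1,\vare}\cap S_{p_2,\vare}) > 2(1-\kappa\vare)\vol_0(M) - \vol_0(M) = (1-2\kappa\vare)\vol_0(M),
\ee
which is exactly the claimed estimate.

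Finally, whenever $\kappa\vare < 1/2$ the right-hand side is strictly positive, so the intersection must have positive $\vol_0$-measure and in particular cannot be empty. There is no real obstacle here — the argument is a one-line union bound once the definition of $W_{\kappa\vare}$ is unwound — so the only thing to be careful about is measurability, which was already observed when $W_{\kappa\vare}$ was introduced (and $S_{p,\vare}$ is measurable for a.e.\ $p$ by Lemma \ref{lem-Spvare}, which is enough since the statement concerns pointwise pairs $p_1,p_2$ in $W_{\kappa\vare}$).
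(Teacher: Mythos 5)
Your proposal is correct and is essentially the same argument as the paper's: the paper phrases the bound in terms of the complements $K_i = M\setminus S_{p_i,\vare}$ and the estimate $\vol_0(S_{p_1,\vare}\cap S_{p_2,\vare}) \ge \vol_0(M) - \vol_0(K_1) - \vol_0(K_2)$, which is algebraically identical to your inclusion-exclusion formulation. The only cosmetic difference is that the paper first proves non-disjointness as a separate short contradiction argument and then gives the quantitative bound, whereas you obtain both at once from the quantitative bound; this is a harmless streamlining.
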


\begin{proof}
If they are disjoint then
\be
\vol_0(S_{p_1,\vare}) + \vol_0(S_{p_2,\vare} )\le \vol_0(M). 
\ee
Then by  (\ref{Wkappavare}) we get 
\be
(1- \kappa\vare) \vol_0(M)+(1- \kappa\vare) \vol_0(M)< \vol_0(M)
\ee
which implies $(1-\kappa \vare)  \leq  1/2$.   

In fact, taking $K_i=M\setminus S_{p_i,\vare}$ we have
\begin{eqnarray}
\vol_0(S_{p_1,\vare} \cap S_{p_2,\vare}) &\ge& \vol_0(M)-\vol_0(K_1\cup K_2)\\
&\ge & \vol_0(M)-(\vol_0(K_1) +\vol_0(K_2))\\
&>  & \vol_0(M)(1 -\kappa \vare-\kappa\vare).
\end{eqnarray}
\end{proof}

\begin{lem} \label{ball-in-0}
Let $M_0$ be a compact Riemannian manifold. For any $\lambda'  \in (0, \diam(M_0))$, $\kappa >1$ there exists $\vare > 0$ small enough so that $\kappa\vare \in (0,1/2)$ and
\be
\min_{x\in M} \vol_0(B(x,\lambda')) \geq 2\kappa\vare \vol_0(M)
\ee
and thus under the hypotheses of Lemma \ref{Sp12int}, 
\be
B(x,\lambda') \cap S_{p_1,\vare} \cap S_{p_2,\vare} \neq \emptyset \quad \forall x \in M, \,\, p_1,p_2\in W_{\kappa\vare}.
\ee
\end{lem}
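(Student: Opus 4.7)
The plan is to handle the two assertions of the lemma in sequence: first, choose $\varepsilon$ using a uniform positive lower bound on ball volumes in $M_0$; second, deduce that the triple intersection is non-empty by an elementary inclusion--exclusion argument. For the first step, I would verify that $x \mapsto \vol_0(B(x,\lambda'))$ is lower semi-continuous on the compact manifold $M$: if $x_n \to x$ and $y \in B(x,\lambda')$ then by continuity of $d_0$ we have $y \in B(x_n, \lambda')$ for all sufficiently large $n$, so $\mathbbm{1}_{B(x,\lambda')}(y) \le \liminf_n \mathbbm{1}_{B(x_n,\lambda')}(y)$ pointwise, and Fatou's lemma yields $\vol_0(B(x,\lambda')) \le \liminf_n \vol_0(B(x_n,\lambda'))$. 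Since any open ball of positive radius in a smooth Riemannian manifold has positive $g_0$-volume, this lower semi-continuous function is strictly positive on the compact manifold $M$, so its infimum
\[
m_0 \;:=\; \min_{x \in M} \vol_0(B(x,\lambda'))
\]
is attained and satisfies $m_0 > 0$. I would then pick any
\[
0 \;<\; \varepsilon \;<\; \min\!\left\{ \tfrac{1}{2\kappa},\; \tfrac{m_0}{2\kappa\,\vol_0(M)} \right\},
\]
so that $\kappa\varepsilon \in (0,1/2)$ and $2\kappa\varepsilon\,\vol_0(M) \le m_0 \le \vol_0(B(x,\lambda'))$ for every $x \in M$, which is precisely the displayed inequality in the statement.

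For the second step, I would apply the standard measure inequality $\vol_0(A \cap C) \ge \vol_0(A) + \vol_0(C) - \vol_0(M)$ with $A = B(x,\lambda')$ and $C = S_{p_1,\varepsilon} \cap S_{p_2,\varepsilon}$. Since $\kappa\varepsilon < 1/2$, Lemma \ref{Sp12int} applies and gives $\vol_0(C) > (1 - 2\kappa\varepsilon)\vol_0(M)$, while the choice of $\varepsilon$ above gives $\vol_0(A) \ge 2\kappa\varepsilon\,\vol_0(M)$. Adding these,
\[
\vol_0\!\bigl( B(x,\lambda') \cap S_{p_1,\varepsilon} \cap S_{p_2,\varepsilon} \bigr) \;>\; 2\kappa\varepsilon\,\vol_0(M) \,+\, (1 - 2\kappa\varepsilon)\,\vol_0(M) \,-\, \vol_0(M) \;=\; 0,
\]
so the triple intersection has positive $g_0$-measure and is in particular non-empty, for every $x \in M$ and every $p_1,p_2 \in W_{\kappa\varepsilon}$.

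The only delicate point in the argument is establishing $m_0 > 0$, which requires both the pointwise fact that open balls have positive $g_0$-volume and a compactness-type argument that prevents the infimum from escaping to zero along a sequence of basepoints; lower semi-continuity on the compact manifold $M$ is the convenient tool to package both. Everything else reduces to arithmetic with inclusion--exclusion and the hypotheses already established in Lemmas \ref{vol-W} and \ref{Sp12int}.
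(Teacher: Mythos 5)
Your proof is correct, but the first step takes a genuinely different route from the paper. To obtain the uniform lower bound $m_0 = \min_{x}\vol_0(B(x,\lambda')) > 0$, the paper invokes the Bishop--Gromov volume comparison theorem: it picks $K$ with $\Ricci(g_0)\ge (m-1)K$ (which exists by compactness), compares $\vol_0(B(x,r_1))/\vol_K(B(x_K,r_1))$ against the same ratio at $r_2 = \diam(M_0)$, and thereby writes the explicit identity
\[
\frac{\vol_K(B(x_K,\lambda'))}{\vol_K(B(x_K,\diam(M_0)))} = 2\kappa\vare
\]
to fix $\vare$. You instead prove $m_0>0$ by showing that $x \mapsto \vol_0(B(x,\lambda'))$ is lower semi-continuous via Fatou's lemma, then using compactness and the fact that open balls have positive measure. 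Both arguments are sound; yours is more elementary and extends verbatim to any compact metric measure space in which open balls have positive measure, while the paper's gives a constructive formula for $\vare$ in terms of $\lambda'$, $\kappa$, and a curvature lower bound (a formula that the paper then quotes in the proof of Theorem~\ref{ptwise-to-SWIF}). Your second step --- nonemptiness of the triple intersection via inclusion--exclusion combined with the strict inequality from Lemma~\ref{Sp12int} --- is the argument the paper leaves implicit (``Thus we get the result''), and you have filled it in correctly.
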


Note that $\kappa\vare$ is a decreasing function of $\lambda'$.
In Figure~\ref{fig-AS-star} note that $S_{p_1,\vare} \cap S_{p_2,\vare}$
would consist of everything not in the wells and any point $x$ in $M$
cannot be far away when measured using $d_0$.  Note that $x$ lying on the
tip of a well might be far away measured using $d_j$.

\begin{proof}
Observe that there is some $K$ possibly negative, such that the Ricci curvature on
$(M, g_0)$ has $Ric(g_0) \ge (m-1)K$ where $m$ is the dimension of $M$.
By the Volume Comparison Theorem
we know that for $r_1 \leq r_2$
\begin{align}
    \frac{\vol_0(B(x,r_1))}{\vol_K(B(x_K,r_1))} \ge \frac{\vol_0(B(x,r_2))}{\vol_K(B(x_K,r_2))},
\end{align}
where  $B(x_K,r_1)$ is a ball in the $m$ dimensional space form of constant sectional curvature $K \in \R$, and $\vol_K$ is the volume as measured in this space form. Now by choosing $r_2 = \diam(M_0)$,  we find
\begin{align}
    \vol_0(B(x,r_1)) \ge \frac{\vol_K(B(x_K,r_1))}{\vol_K(B(x_K,\diam(M_0)))} \vol_0(M) \quad \forall x \in M.
\end{align}
Hence by choosing $r_1 = \lambda'$, let $\vare > 0$ be chosen so that the equality holds
\begin{align}
    \frac{\vol_K(B(x_K,\lambda'))}{\vol_K(B(x_K,\diam(M_0)))} =  2\kappa \vare.
\end{align}
Thus we get the result.
\end{proof}

\begin{lem}\label{unif-on-W}
Let $M_j,M_0$ be Riemannian manifolds which satisfy the hypotheses of Theorem \ref{ptwise-to-SWIF}. For any $\lambda'>0$ and $\kappa>1$, let $\vare >0$ be given as
in Lemma~\ref{ball-in-0}.  Then, for $\delta_{\vare,j}$ defined as in Proposition \ref{Svare}, we find 
\be\label{eq-lemUnifW}
|d_j(p_1,p_2)-d_0(p_1,p_2)| < 2 \lambda' + 2\delta_{\vare,j}
\ee
for all $p_1, p_2 \in W_{\kappa\vare}$.
\end{lem}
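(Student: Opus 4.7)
The plan is to exploit that $S_{p_1,\vare}$ and $S_{p_2,\vare}$ have large intersection (Lemma~\ref{Sp12int}) and, crucially, that every small $d_0$-ball meets this intersection (Lemma~\ref{ball-in-0}). This will let me insert an intermediate point $q$ between $p_1$ and $p_2$, close to a midpoint, on which both $d_j(p_i,q)$ are forced to be $\delta_{\vare,j}$-close to $d_0(p_i,q)$ by the uniform convergence from Egoroff's Theorem. A triangle inequality then yields the desired two-sided bound.

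More precisely, since $g_j \ge g_0$ we immediately get $d_j(p_1,p_2) \ge d_0(p_1,p_2)$, so only the upper bound $d_j(p_1,p_2) - d_0(p_1,p_2) < 2\lambda' + 2\delta_{\vare,j}$ needs work. Because $(M_0,d_0)$ is a compact length space, pick a minimizing $d_0$-geodesic from $p_1$ to $p_2$ and let $x$ be its midpoint, so that $d_0(p_i,x) = \tfrac{1}{2}d_0(p_1,p_2)$ for $i=1,2$. By Lemma~\ref{ball-in-0}, applied to this $x$, there exists a point
\be
q \in B(x,\lambda') \cap S_{p_1,\vare} \cap S_{p_2,\vare}.
\ee
Then $d_0(p_i,q) \le d_0(p_i,x) + d_0(x,q) \le \tfrac{1}{2}d_0(p_1,p_2) + \lambda'$ for $i=1,2$.

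Since $q \in S_{p_i,\vare}$, the uniform estimate \eqref{unifSvare} gives $d_j(p_i,q) \le d_0(p_i,q) + \delta_{\vare,j}$ for $i = 1,2$. Combining with the triangle inequality in $(M,d_j)$,
\begin{align}
d_j(p_1,p_2) &\le d_j(p_1,q) + d_j(q,p_2) \\
&\le d_0(p_1,q) + d_0(p_2,q) + 2\delta_{\vare,j} \\
&\le d_0(p_1,p_2) + 2\lambda' + 2\delta_{\vare,j},
\end{align}
which together with $d_j(p_1,p_2) \ge d_0(p_1,p_2)$ yields \eqref{eq-lemUnifW}.

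The only subtlety is the existence of a midpoint, which is standard for compact Riemannian manifolds (any point halfway along a minimizing geodesic works); if one prefers to avoid invoking minimizing geodesics, the same argument goes through with $x$ chosen as an approximate midpoint with $d_0(p_i,x) \le \tfrac{1}{2}d_0(p_1,p_2) + \eta$ for arbitrary $\eta > 0$, absorbing $\eta$ into $\lambda'$. No other obstacle arises: Lemma~\ref{ball-in-0} is exactly tailored to guarantee the nonemptiness of the intersection $B(x,\lambda') \cap S_{p_1,\vare} \cap S_{p_2,\vare}$ for $p_1,p_2 \in W_{\kappa\vare}$, which is the only nontrivial ingredient.
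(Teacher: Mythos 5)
Your argument is correct and is essentially identical to the paper's own proof: both insert the $d_0$-midpoint $x$ of $p_1,p_2$, apply Lemma~\ref{ball-in-0} to produce $q \in B(x,\lambda') \cap S_{p_1,\vare} \cap S_{p_2,\vare}$, and then combine the triangle inequality with the uniform estimate~\eqref{unifSvare} on $S_\vare$. The only cosmetic difference is your explicit remark about constructing the midpoint via a minimizing geodesic and the fallback to approximate midpoints, which the paper takes as given.
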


Note that for $(x_1,x_2) \in S_\vare$ by (\ref{unifSvare}) we have a better distance bound but we are calculating a distance  bound for points in 
$W_{\kappa\vare} \times  W_{\kappa\vare}$ which is not necessarily contained in $S_\vare$. This happens, in particular, when $p_2 \notin S_{p_1,\vare}$.

\begin{proof}
 Let $p_1, p_2 \in W_{\kappa\vare}$, and let $x$ be their $d_0$-midpoint:
\be\label{midpt}
d_0(p_1,x)+d_0(p_2,x)= d_0(p_1,p_2).
\ee
By Lemma~\ref{ball-in-0}, there exists
\be
q\in B_0(x,\lambda') \cap S_{p_1,\vare} \cap S_{p_2,\vare}.
\ee
So
\be
(p_i,q) \in S_\vare \textrm{ and } d_0(x,q)<\lambda'.
\ee
By (\ref{unifSvare})  
\be
d_0(p_i,q)\le d_j(p_i,q) < d_0(p_i,q)+\delta_{\vare,j}.
\ee
Combining this with the triangle inequality, we have
\begin{eqnarray}
\qquad d_0(p_1,p_2)&\le& d_j(p_1,p_2)\\
&\le& d_j(p_1,q)+d_j(q,p_2)\\
&\le & d_0(p_1,q)+d_0(q,p_2) + 2 \delta_{\vare,j}\\
&\le & (d_0(p_1,x) + d_0(x,q))+ (d_0(q,x)+d_0(x, p_2)) + 2 \delta_{\vare,j}\\
& < & (d_0(p_1,x) + \lambda') + (\lambda'+d_0(x,p_2)) + 2 \delta_{\vare,j}\\
&=& d_0(p_1,p_2)+ 2 \lambda' + 2\delta_{\vare,j}
\end{eqnarray}
where the last line follows from (\ref{midpt}). 
\end{proof}

\subsection{Proof of Theorem \ref{ptwise-to-SWIF}}\label{subsect:Proof of SWIF}

\begin{proof}[Proof of Theorem \ref{ptwise-to-SWIF}]
For any $\kappa >1$ and $\lambda' \in  (0, \diam(M_0))$, let $\vare>0$ be given as in 
Lemma \ref{ball-in-0}. That is, choose $\vare > 0$ such that 
\begin{align}
    \frac{\vol_K(B(x_K,\lambda'))}{\vol_K(B(x_K,\diam(M_0)))} =  2\kappa \vare. 
\end{align}
Thus, with this $\vare >0$ we obtain by applying the results in sections  \ref{subsect:Fubini} - \ref{subsect:Good Set}
a set $S_\vare$ and  a set $W_{\kappa \vare}$,  see (\ref{Spvare}) and (\ref{Wkappavare}), such that by Lemma \ref{vol_j-W}, 
\be
\vol_j(M \setminus W_{\kappa \vare}) \le  \frac{1}{\kappa}\vol_0(M)+|\vol_j(M)-\vol_0(M)|.
\ee
Moreover, by Lemma \ref{unif-on-W} we find that 
\be
|d_j(p_1,p_2)-d_0(p_1,p_2)| < 2 \lambda' + 2\delta_{\vare,j}
\ee
for all $p_1, p_2 \in W_{\kappa\vare}$.

Thus, we can apply Theorem \ref{est-SWIF} to get
\begin{align}
d_{\mathcal{F}}(M_0,M_j) \le   \left( \tfrac{2}{\kappa}\vol_0(M)+  2|\vol_j(M)-\vol_0(M)|   +   h_j  V  \right)
\end{align}
where $h_j =   \sqrt{  2(\lambda'+\delta_{\varepsilon,j})D  +   (\lambda'+\delta_{\varepsilon,j})^2}$
and $V >0$ is an upper volume bound which exists since $\vol_j(M) \to \vol_0(M)$ by hypothesis. 

Hence we find
\begin{align}
\limsup_{j\rightarrow \infty} d_{\mathcal{F}}(M_0,M_j) \le  \left(  \tfrac{2}{\kappa} \vol_0(M)+   \sqrt {2 \lambda'D  +   \lambda'^2}\, V \right),
\end{align}
and since this is true for any $\kappa, \lambda'$ we find that
\begin{align}
\limsup_{j\rightarrow \infty} d_{\mathcal{F}}(M_0,M_j) =0.
\end{align}
\end{proof}

\section{Proving our Main Results}\label{sect:ProofMainThm}

In this section we will combine our results to prove Theorem~\ref{vol-thm} which was stated in the introduction.
As a corollary to Theorem \ref{vol-thm} we notice that we are allowed to loosen the metric inequality from below in \eqref{g_j-below-vol-thm} and still come away with the same conclusion. This is useful in applications such as the geometric stability of the scalar torus rigidity theorem explored by Allen, Hernandez-Vazquez,Parise, Payne, and Wang \cite{AHMPPW1} in the case of warped products and Cabrera Pacheco, Ketterer, and Perales \cite{CPKP19} in the case of graphs. The following corollary  has been  applied by Allen to prove geometric stability of the scalar torus rigidity theorem in the conformal case  \cite{Allen-Conformal-Torus}.

\begin{cor}\label{cor-vol-thm}
Suppose we have a fixed  compact oriented Riemannian manifold, $M_0=(M,g_0)$,
without boundary and
a sequence of metric tensors $g_j$ on $M$ defining $M_j=(M, g_j)$ with
\be \label{g_j-below-vol-thm}
\left(1 - \tfrac{1}{2j} \right)g_0(v,v) \le g_j(v,v) \qquad \forall v\in T_pM
\ee 
and a uniform upper bound on diameter
\be
\diam_j(M_j) \le D_0
\ee
and volume convergence
\be
\vol_j(M_j) \to \vol_0(M_0)
\ee
then 
\be
M_j \VFto M_0.
\ee
\end{cor}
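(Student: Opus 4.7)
The plan is to reduce to Theorem~\ref{vol-thm} by a uniform rescaling of $g_j$ that restores the metric inequality from below, and then transfer the conclusion back to the original sequence via a biLipschitz comparison whose ratio tends to $1$.

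First I would define
\be
\tilde g_j \,:=\, \frac{1}{\,1-\tfrac{1}{2j}\,}\, g_j \,=\, \frac{2j}{2j-1}\, g_j, \qquad \tilde M_j \,:=\, (M,\tilde g_j).
\ee
Multiplying the hypothesis $(1-\tfrac{1}{2j})g_0(v,v)\le g_j(v,v)$ through by the positive constant $\tfrac{2j}{2j-1}$ yields $g_0(v,v)\le \tilde g_j(v,v)$ for all $v\in TM$. Letting $c_j := \sqrt{2j/(2j-1)}\to 1$, one has $\tilde d_j = c_j\, d_j$ on all of $M\times M$, so $\diam(\tilde M_j)\le c_j D_0 \le 2D_0$ for $j\ge 1$, and
\be
\vol_{\tilde g_j}(M) \,=\, c_j^{\,m}\,\vol_j(M) \,\longrightarrow\, \vol_0(M),
\ee
since $c_j^{\,m}\to 1$ and $\vol_j(M)\to\vol_0(M)$ by hypothesis. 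Thus $\tilde M_j$ satisfies all the hypotheses of Theorem~\ref{vol-thm}, which gives $\tilde M_j \VFto M_0$.

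Next I would establish $d_{\mathcal F}(M_j,\tilde M_j)\to 0$. Since the identity map on $M$ is a $c_j$-biLipschitz equivalence between $M_j$ and $\tilde M_j$ with $\sup|\tilde d_j - d_j|\le (c_j-1)D_0 =: \varepsilon_j \to 0$, this is precisely the setting of the appendix of \cite{HLS}. The explicit construction recalled around (\ref{Fj}) provides a common space $Z_j = [-\varepsilon_j,\varepsilon_j]\times M$ into which $M_j$ and $\tilde M_j$ embed isometrically at opposite ends, and the current $[[Z_j]]$ fills the difference with mass bounded by a constant multiple of $c_j^{\,m+1}\varepsilon_j\, \vol_j(M)\to 0$. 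Hence $d_{\mathcal F}(M_j,\tilde M_j)\to 0$.

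Combining the two steps by the triangle inequality for the intrinsic flat distance yields $d_{\mathcal F}(M_j,M_0)\to 0$, and together with the hypothesized $\vol_j(M)\to\vol_0(M)$ this gives $M_j\VFto M_0$. The main conceptual content is already furnished by Theorem~\ref{vol-thm}; the only technical point is checking that uniform biLipschitz control with constant $c_j\to 1$ suffices to invoke the HLS filling construction, which is immediate since $g_j$ and $\tilde g_j$ live on the same smooth manifold and differ only by a constant scalar factor, so I do not anticipate a substantial obstacle.
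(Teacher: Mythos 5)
Your proposal is correct and follows essentially the same strategy as the paper: rescale $g_j$ by the constant $\bigl(1-\tfrac{1}{2j}\bigr)^{-1}$ to restore the pointwise lower bound $\tilde g_j\ge g_0$, invoke Theorem~\ref{vol-thm} to get $\tilde M_j\VFto M_0$, show $d_{\mathcal F}(M_j,\tilde M_j)\to 0$, and conclude by the triangle inequality. The one place you diverge from the paper is the tool used for the middle comparison: the paper applies its own Theorem~\ref{est-SWIF} with $W_j=M$ (using $\tilde g_j\ge g_j$ and $\sup|d_j-\tilde d_j|\to 0$), which keeps the proof self-contained, whereas you invoke the HLS appendix filling construction of Theorem~\ref{app-thm}, relying on the uniform biLipschitz constants $c_j\le\sqrt2$ and $\sup|d_j-\tilde d_j|=(c_j-1)\diam_j(M_j)\to 0$. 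Both are legitimate here precisely because the two metrics differ by a constant conformal factor, so the two-sided biLipschitz hypothesis needed by HLS and the one-sided distance-nonincreasing hypothesis needed by Theorem~\ref{est-SWIF} are simultaneously available; your route works but imports an external estimate where the paper uses its own.
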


\subsection{Proof of Theorem~\ref{vol-thm}}

\begin{proof}

By Theorem \ref{PointwiseConvergenceAE} there exists a subsequence such that 
\be
\lim_{j\to \infty} d_{j}(p,q) = d_0(p,q) \textrm{ pointwise a.e. } (p,q) \in M\times M.
\ee
Hence by combining the hypotheses of Theorem \ref{vol-thm} with Theorem \ref{ptwise-to-SWIF} we have
\be
d_{\mathcal{F}}(M_j, M_0 ) \to 0.
\ee
If not, we would have a subsequence so that
\begin{align}
d_{\mathcal{F}}(M_{j_i},M_0 )> \varepsilon,
\end{align}
then by the argument above there would be a subsequence which converges to $M_0$ which is a contradiction. Hence, we have the desired claim that the original sequence must converge to $M_0$.
\end{proof}

\subsection{Proof of Corollary~\ref{cor-vol-thm}}

\begin{proof}
Consider $\tilde{g}_j = \frac{1}{1-\frac{1}{2j}} g_j$ and $\tilde{M}_j=(M,\tilde{g}_j)$. 
Then $ \tilde{g}_j \ge g_0$ and
\begin{align}
\vol_j(\tilde{M}_j) &= \left(1-\tfrac{1}{2j} \right)^{\frac{n}{2}} \vol_j(M_j) \rightarrow \vol(M_0)
\\ \diam_j(\tilde{M}_j) &= \left(1-\tfrac{1}{2j} \right)^{\frac{1}{2}} \diam_j(M_j) \le D_0.
\end{align}
Hence $\tilde{M}_j$ satisfies the hypotheses of Theorem \ref{vol-thm} which implies
\begin{align}
\tilde{M}_j \VFto M_0.
\end{align}
On the other hand, by construction we have  $\|g_j-\tilde{g}_j\|_{C^0_{g_0}(M)} \rightarrow 0$ which implies 
\begin{align}
\sup_{p,q\in M}|d_j(p,q)-\tilde{d}_j(p,q)|\rightarrow 0,
\end{align}
and since $\tilde{g}_j \ge g_j$ we can apply Theorem \ref{est-SWIF} with $W_j=M$ to find
\begin{align}
d_{\mathcal{F}}(\tilde{M}_j, M_j) \rightarrow 0.
\end{align}
Hence by the triangle inequality for the intrinsic flat distance we find
\begin{align}
M_j \VFto M_0.
\end{align}
\end{proof}

\subsection{Statement and Proof of Corollary~\ref{cor-Lp-thm}}

We now state a corollary which uses the observation that if one has $L^{\frac{m}{2}}$ convergence of the metric tensors $g_j$ to $g_0$ then that implies volume convergence. This perspective on Theorem \ref{vol-thm} 
says that if we have $L^p$, $p\ge \frac{m}{2}$ convergence of Riemannian manifolds then we can use Theorem \ref{vol-thm} to bootstrap up to the stronger notion of volume preserving intrinsic flat convergence by ensuring a diameter bound and a $C^0$ bound from below. 

\begin{cor} \label{cor-Lp-thm}
Suppose we have a fixed  compact oriented Riemannian manifold, $M_0=(M,g_0)$,
without boundary and
a sequence of metric tensors $g_j$ on $M$ defining $M_j=(M, g_j)$ with
\be \label{g_j-below-vol-thm}
\left(1 - \tfrac{1}{2j} \right)g_0(v,v) \le g_j(v,v) \qquad \forall v\in T_pM
\ee 
and a uniform upper bound on diameter
\be
\diam_j(M_j) \le D_0
\ee
and $L^p$ convergence with $p\ge \frac{m}{2}$
\be
\left (\int_M|g_j-g_0|_{g_0}^p dV_{g_0} \right)^{\frac{1}{p}} \rightarrow 0
\ee
then 
\be
M_j \VFto M_0.
\ee
\end{cor}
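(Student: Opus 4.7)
The plan is to reduce Corollary~\ref{cor-Lp-thm} to Corollary~\ref{cor-vol-thm} by extracting the volume convergence $\vol_j(M_j)\to\vol_0(M_0)$ from the $L^p$ hypothesis together with the metric lower bound. Once this is done, the remaining hypotheses of Corollary~\ref{cor-vol-thm} are already present, and its conclusion $M_j\VFto M_0$ follows at once.

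First, since $M$ is compact, H\"older's inequality gives the continuous inclusion $L^p(M,dV_{g_0})\subset L^{m/2}(M,dV_{g_0})$ for $p\ge m/2$, so it suffices to treat the critical exponent $p=m/2$.

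Second, I would estimate $\vol_j(M)-\vol_0(M)=\int_M(\sqrt{\det_{g_0}(g_j)}-1)\,dV_{g_0}$ from above and below. Let $\lambda_1^{(j)},\ldots,\lambda_m^{(j)}$ denote the eigenvalues of $g_j$ with respect to $g_0$. For the upper estimate, AM--GM together with $|\operatorname{tr}_{g_0}(g_j-g_0)|\le\sqrt m\,|g_j-g_0|_{g_0}$ yields
\be
\sqrt{\det_{g_0}(g_j)}=\sqrt{\textstyle\prod_i\lambda_i^{(j)}}\le\left(\tfrac{\operatorname{tr}_{g_0}(g_j)}{m}\right)^{m/2}\le\left(1+\tfrac{|g_j-g_0|_{g_0}}{\sqrt m}\right)^{m/2}.
\ee
The elementary inequality $(1+y)^{m/2}-1\le C_m(y+y^{m/2})$ for $y\ge 0$ and H\"older's inequality $\|g_j-g_0\|_{L^1}\le\vol_0(M)^{1-2/m}\|g_j-g_0\|_{L^{m/2}}$ then give
\be
\vol_j(M)-\vol_0(M)\le C\bigl(\|g_j-g_0\|_{L^{m/2}}+\|g_j-g_0\|_{L^{m/2}}^{m/2}\bigr)\longrightarrow 0.
\ee
For the matching lower estimate, the hypothesis $(1-1/(2j))g_0\le g_j$ gives $\det_{g_0}(g_j)\ge(1-1/(2j))^m$ pointwise, hence
\be
\vol_j(M)-\vol_0(M)\ge\bigl((1-\tfrac{1}{2j})^{m/2}-1\bigr)\vol_0(M)\longrightarrow 0.
\ee
These two bounds together establish $\vol_j(M_j)\to\vol_0(M_0)$.

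Finally, the metric lower bound, the diameter bound, and this just-established volume convergence are precisely the hypotheses of Corollary~\ref{cor-vol-thm}, whose application concludes $M_j\VFto M_0$.

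The main obstacle is the upper bound in the second step: a direct control of $|\det_{g_0}(g_j)-1|$ as a polynomial of degree $m$ in $g_j-g_0$ would naively require $L^m$ convergence, but the square root in the Riemannian volume form, combined with the AM--GM bound that replaces a product of eigenvalues by their arithmetic mean, halves the effective degree and matches the $L^{m/2}$ hypothesis precisely. The metric lower bound plays a dual role here: it supplies the cheap lower estimate for the volume and also feeds into the final invocation of Corollary~\ref{cor-vol-thm}.
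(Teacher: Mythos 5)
Your argument is correct and follows the same overall plan as the paper: establish $\vol_j(M_j)\to\vol_0(M_0)$ from the $L^p$ hypothesis and the metric lower bound, and then invoke Corollary~\ref{cor-vol-thm}. The paper simply cites Lemmas 2.5, 2.7, and 4.3 of \cite{Allen-Sormani-2} for the volume-convergence step, whereas you supply a self-contained derivation via AM--GM (which is essentially the same underlying mechanism -- it is precisely the arithmetic--geometric inequality that halves the effective degree and matches the critical exponent $m/2$).
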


\begin{proof}
By Lemma 2.5 and Lemma 2.7 of \cite{Allen-Sormani-2} we know that $L^p$ convergence with $p \ge \frac{m}{2}$ implies
\be
\int_M |g_j|_{g_0}^{\frac{m}{2}} dV_{g_0}  \rightarrow \int_M |g_0|_{g_0}^{\frac{m}{2}} dV_{g_0} .
\ee
Then by the hypothesis that $g_j \ge \left(1 - \tfrac{1}{2j} \right) g_0$ and Lemma 4.3 of \cite{Allen-Sormani-2} we find volume convergence which allows us to use Corollary \ref{cor-vol-thm} to finish the proof.
\end{proof}

\begin{rmrk}\label{rmrk-diffeo}
It should also be noted that if one has a sequence of Riemannian manifolds which are diffeomorphic and
one can find a sequence of diffeomorphisms such that the pull back metrics satisfy the hypotheses of the
corollary stated above, then one obtains $M_j \VFto M_0$ as well.
\end{rmrk}

  \bibliographystyle{alpha}
 \bibliography{allen}

\end{document}